\newtheorem{theorem}{Theorem}[section]
\newtheorem{lemma}[theorem]{Lemma}
\theoremstyle{definition}
 \theoremstyle{remark}
\newtheorem{remark}[theorem]{Remark}
 \numberwithin{equation}{section}
\begin{document}

\title[Green's function and sharp  Hardy-Sobolev-Maz'ya inequalities ]{Green's functions of Paneitz and GJMS
 operators on hyperbolic spaces and sharp  Hardy-Sobolev-Maz'ya inequalities on half spaces}

\author{Guozhen Lu}
\address{Department of Mathematics,  University of Connecticut, Storrs, CT 06269, USA}

\email{guozhen.lu@uconn.edu}

\author{Qiaohua  Yang}
\address{School of Mathematics and Statistics, Wuhan University, Wuhan, 430072, People's Republic of China}

\email{qhyang.math@whu.edu.cn; qhyang.math@gmail.com}

\thanks{ The first author was partly supported by a Simons collaboration grant from the Simons Foundation  and the second author's research was partly supported by   the National Natural
Science Foundation of China (No.11201346).}


\subjclass[2000]{Primary  35J20; 46E35}



\keywords{Green's function; Hyperbolic spaces; Hardy-Sobolev-Maz'ya inequalities, Paneitz and GJMS operators.}

\begin{abstract}
Using the Fourier analysis techniques on hyperbolic spaces and Green's function estimates, we confirm in this paper  the conjecture given by the same authors in \cite{LuYang3}. Namely, we prove that the sharp constant in the $\frac{n-1}{2}$-th order Hardy-Sobolev-Maz'ya inequality  in the   upper half space of dimension $n$ coincides with the best $\frac{n-1}{2}$-th order Sobolev constant when $n$ is odd and $n\geq9$ (See Theorem \ref{th1.1}).
 We will also establish a lower bound of  the coefficient of the Hardy term for the $k-$th order  Hardy-Sobolev-Maz'ya inequality in upper half space in the remaining cases of dimension $n$ and $k$-th order derivatives (see Theorem \ref{th1.2}).
 Precise expressions and optimal bounds  for Green's functions of the operator $ -\Delta_{\mathbb{H}}-\frac{(n-1)^{2}}{4}$ on the hyperbolic space $\mathbb{B}^n$  and operators of the product form  are given,
 where $\frac{(n-1)^{2}}{4}$ is the spectral gap for the Laplacian  $-\Delta_{\mathbb{H}}$ on  $\mathbb{B}^n$. Finally,
   we give the precise expression and  optimal pointwise bound  of Green's function of the Paneitz and GJMS operators on hyperbolic space, which are of their independent interest (see Theorem \ref{th1.3}).
\end{abstract}

\maketitle


\section{Introduction}
The classical Sobolev inequalities and sharp constants play an essential role in analysis and geometry.  If $k$ is a positive integer, then the $k-$th order Sobolev inequality states as follows (see \cite{au,ta,lie,ct})
 \begin{equation}\label{1.1}
\int_{\mathbb{R}^{n}}|(-\Delta)^{k/2}u|^{2}dx\geq S_{n,k}
\left(\int_{\mathbb{R}^{n}}|u|^{\frac{2n}{n-2k}}dx\right)^{\frac{n-2k}{n}},\;\; u\in C^{\infty}_{0}(\mathbb{R}^{n}),\;\;1\leq k<\frac{n}{2},
\end{equation}
where
 \begin{equation}\label{1.2}
S_{n,k}=\frac{\Gamma(\frac{n+2k}{2})}{\Gamma(\frac{n-2k}{2})}\omega^{2k/n}_{n}
\end{equation}
is the best Sobolev
constant and  $\omega_{n}=\frac{2\pi^{\frac{n+1}{2}}}{\Gamma(\frac{n+1}{2})}$ is the surface measure of $\mathbb{S}^{n}=\{x\in\mathbb{R}^{n+1}: |x|=1\}$. Maz'ya (\cite{maz}, Section 2.1.6) gave
a refinement of both the first order Sobolev and the Hardy inequalities in half spaces which are known as the Hardy-Sobolev-Maz'ya inequality.  It reads as follows
\begin{equation}\label{1.3}
\int_{\mathbb{R}^{n}_{+}}|\nabla u|^{2}dx-\frac{1}{4}\int_{\mathbb{R}^{n}_{+}}\frac{u^{2}}{x^{2}_{1}}dx\geq C_n\left(\int_{\mathbb{R}^{n}_{+}}
x^{\gamma}_{1}|u|^{p}dx\right)^{\frac{2}{p}},\;\;\; u\in C^{\infty}_{0}(\mathbb{R}^{n}_{+}),
\end{equation}
where $2<p\leq\frac{2n}{n-2}$, $\gamma=\frac{(n-2)p}{2}-n$, $\mathbb{R}^{n}_{+}=\{(x_{1},x_{2},\cdots,x_{n})\in\mathbb{R}^{n}: x_{1}>0\}$ and $C_n$ is a positive constant which is independent of $u$. Such inequalities with a Hardy remaining term have been extensively  studied by many authors (see e.g. \cite{be, be2, be3, bfl, BM97, BV, fmt2,fl, LamLuZhang1, m1,mas,te} and the references therein).

 It is known that the upper half space can be regarded as a hyperbolic space. Then the Hardy-Sobolev-Maz'ya  inequality on the upper half space is equivalent to the corresponding inequality on the hyperbolic ball.
 To be more precise,
 we now recall the ball model as a hyperbolic space.
 The unit ball
\[\mathbb{B}^{n}=\{x=(x_{1},\cdots,x_{n})\in \mathbb{R}^{n}| |x|<1\}\]
equipped with the usual Poincar\'e metric
\[
ds^{2}=\frac{4(dx^{2}_{1}+\cdots+dx^{2}_{n})}{(1-|x|^{2})^{2}}
\]
is known as the hyperbolic space of ball model.
The hyperbolic gradient is $\nabla_{\mathbb{H}}=\frac{1-|x|^{2}}{2}\nabla$ and
the
Laplace-Beltrami operator is given by
$$
\Delta_{\mathbb{H}}=\frac{1-|x|^{2}}{4}\left\{(1-|x|^{2})\sum^{n}_{j=1}\frac{\partial^{2}}{\partial
x^{2}_{j}}+2(n-2)\sum^{n}_{j=1}x_{j}\frac{\partial}{\partial
x_{j}}\right\}.
$$
The hyperbolic volume is $dV=\left(\frac{2}{1-|x|^2}\right)^ndx$.

\medskip

Then the GJMS operators on $\mathbb{B}^{n}$  is defined as follows (see \cite{GJMS}, \cite{go},  \cite{j})
\begin{equation}\label{1.5}
 P_{k}=P_{1}(P_{1}+2)\cdot\cdots\cdot(P_{1}+k(k-1)),\;\; k\in\mathbb{N},
\end{equation}
where $P_{1}=-\Delta_{\mathbb{H}}-\frac{n(n-2)}{4}$ is the conformal Laplacian on $\mathbb{B}^{n}$ and
$$P_{2}=\left(-\Delta_{\mathbb{H}}-\frac{n(n-2)}{4}\right)
\left(-\Delta_{\mathbb{H}}-\frac{(n+2)(n-4)}{4}\right)$$
is the well-known Paneitz operator (see \cite{Paneitz}).
 The GJMS operator $P_{k}$  on $\mathbb{B}^{n}$ can also be written as
\begin{equation*}
P_{k}=\prod^{k}_{j=1}\left(-\Delta_{\mathbb{H}}-\frac{(n-1)^{2}}{4}
+(j-1/2)^{2}\right).
\end{equation*}

Hebey proved in \cite{heb}  (see also \cite{heb1}) the following theorem:
\begin{theorem}\label{Hebey-Constant}
Let $n\geq 4$. Suppose that there exists $\lambda\in\mathbb{R}$ such that for
any $u\in C^{\infty}_{0}(\mathbb{B}^{n})$,
\[
\int_{\mathbb{B}^{n}}(P_{1}u)udV+\lambda \int_{\mathbb{B}^{n}}u^{2}dV\geq S_{n, 1}
\left(\int_{\mathbb{B}^{n}}|u|^{\frac{2n}{n-2}}dV\right)^{\frac{n-2}{n}},
\]
where $S_{n, 1}$ is the best  first order Sobolev constant. Then $\lambda\geq0$.
\end{theorem}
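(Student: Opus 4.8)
The plan is to use the conformal covariance of $P_1$ to convert the inequality into a perturbed \emph{flat} Sobolev inequality on the Euclidean unit ball, and then, assuming $\lambda<0$, to contradict it by testing against concentrating Aubin--Talenti bubbles.

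First I would exploit that $(\mathbb{B}^n,ds^2)$ is conformal to the flat ball: writing $\phi(x)=\left(\tfrac{2}{1-|x|^2}\right)^{(n-2)/2}$ one has $ds^2=\phi^{4/(n-2)}\,dx^2$, and $P_1$ is precisely the conformal Laplacian of $ds^2$ (indeed $\tfrac{n-2}{4(n-1)}R_{\mathbb H}=-\tfrac{n(n-2)}{4}$). The conformal transformation rule $P_1 u=\phi^{-(n+2)/(n-2)}\bigl(-\Delta(\phi u)\bigr)$ together with $dV=\phi^{2n/(n-2)}\,dx$ gives, after the substitution $v=\phi u$ (a bijection of $C_0^\infty(\mathbb{B}^n)$ onto itself),
\[
\int_{\mathbb{B}^n}(P_1u)u\,dV=\int_{\mathbb{B}^n}|\nabla v|^2\,dx,\qquad \int_{\mathbb{B}^n}|u|^{\frac{2n}{n-2}}\,dV=\int_{\mathbb{B}^n}|v|^{\frac{2n}{n-2}}\,dx,\qquad \int_{\mathbb{B}^n}u^2\,dV=\int_{\mathbb{B}^n}\frac{4v^2}{(1-|x|^2)^2}\,dx.
\]
Thus the hypothesis is equivalent to: for every $v\in C_0^\infty(\mathbb{B}^n)$,
\[
\int_{\mathbb{B}^n}|\nabla v|^2\,dx+\lambda\int_{\mathbb{B}^n}\frac{4v^2}{(1-|x|^2)^2}\,dx\ \geq\ S_{n,1}\left(\int_{\mathbb{B}^n}|v|^{\frac{2n}{n-2}}\,dx\right)^{\frac{n-2}{n}}.
\]

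Next, assume for contradiction $\lambda<0$ and test with truncated bubbles centered at the origin: fix $\eta\in C_0^\infty(B_{1/2})$ with $\eta\equiv1$ on $B_{1/4}$, $0\le\eta\le1$, and set $v_\varepsilon(x)=\eta(x)\bigl(\tfrac{\varepsilon}{\varepsilon^2+|x|^2}\bigr)^{(n-2)/2}$. The classical truncated--bubble estimates give, as $\varepsilon\to0^+$, $\int_{\mathbb{B}^n}|\nabla v_\varepsilon|^2\,dx=K_1+O(\varepsilon^{n-2})$ and $\bigl(\int_{\mathbb{B}^n}|v_\varepsilon|^{\frac{2n}{n-2}}\,dx\bigr)^{\frac{n-2}{n}}=K_2+O(\varepsilon^{n})$, where $K_1,K_2$ are the corresponding quantities of the exact flat extremal $U(x)=(1+|x|^2)^{-(n-2)/2}$, so that $K_1=S_{n,1}K_2$. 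Since $\tfrac{4}{(1-|x|^2)^2}\geq1$ on $B_{1/2}$, a crude lower bound for the weight term reads
\[
\int_{\mathbb{B}^n}\frac{4v_\varepsilon^2}{(1-|x|^2)^2}\,dx\ \geq\ \int_{B_{1/4}}\frac{\varepsilon^2\,dx}{(\varepsilon^2+|x|^2)^{n-2}}\ \geq\ \begin{cases}c\,\varepsilon^2,& n\geq5,\\ c\,\varepsilon^2\log\tfrac1\varepsilon,& n=4,\end{cases}
\]
with $c>0$ fixed and $\varepsilon$ small. Plugging $v_\varepsilon$ into the displayed inequality and using $K_1=S_{n,1}K_2$ yields $O(\varepsilon^{n-2})+\lambda\!\int_{\mathbb{B}^n}\tfrac{4v_\varepsilon^2}{(1-|x|^2)^2}\,dx+O(\varepsilon^n)\geq0$. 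For $n\geq5$ the middle term is $\leq\lambda c\,\varepsilon^2$ (recall $\lambda<0$), and since $n-2>2$ it dominates the two error terms; dividing by $\varepsilon^2$ and letting $\varepsilon\to0$ forces $\lambda c\geq0$, a contradiction. For $n=4$ the numerator error is only $O(\varepsilon^2)$, but the weight term is $\leq\lambda c\,\varepsilon^2\log\tfrac1\varepsilon$, which still strictly dominates; dividing by $\varepsilon^2\log\tfrac1\varepsilon$ and letting $\varepsilon\to0$ again gives $\lambda c\geq0$, a contradiction. Hence $\lambda\geq0$.

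The only real subtlety I expect is the order bookkeeping in the last step: one must ensure that the perturbation $\lambda\int u^2\,dV$ enters at an order ($\varepsilon^2$ if $n\geq5$, $\varepsilon^2\log\tfrac1\varepsilon$ if $n=4$) strictly larger than the $O(\varepsilon^{n-2})$ truncation error of the Sobolev quotient. This is exactly where the hypothesis $n\geq4$ is used: the case $n=4$ is borderline and relies on the logarithmic blow-up of $\int_{B_{1/4}}\varepsilon^2(\varepsilon^2+|x|^2)^{-(n-2)}\,dx$, while for $n=3$ this line of argument would fail. (Alternatively one may avoid the conformal reduction and work in geodesic normal coordinates on $\mathbb{B}^n$ with the metric expansion $g_{ij}=\delta_{ij}+O(|y|^2)$; the conformal change is just a device that removes these curvature corrections.)
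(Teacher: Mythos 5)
Your proof is correct and uses essentially the same strategy the paper applies (for $k\geq 2$) in Lemmas \ref{lm5.1}--\ref{lm5.2} when proving its generalization, Theorem \ref{th1.2}: conformal covariance of the GJMS operator reduces the claim to a perturbed flat Sobolev inequality, which concentrating truncated Aubin--Talenti bubbles then violate unless $\lambda\ge 0$, with the borderline $n=4$ case rescued by the logarithmic blow-up of the $L^2$-norm. (The paper itself only cites Hebey for the $k=1$ statement; for $k\ge 2$ it truncates by subtracting a polynomial annihilated by $P_k$ rather than by a smooth cutoff, but the mechanism and the order bookkeeping are identical to yours.) One small typo: on $B_{1/4}$ the weight integrand should read $\varepsilon^{n-2}(\varepsilon^2+|x|^2)^{-(n-2)}$, not $\varepsilon^{2}(\varepsilon^2+|x|^2)^{-(n-2)}$; the substitution $x=\varepsilon y$ then gives $\varepsilon^{n-2}\cdot\varepsilon^n/\varepsilon^{2(n-2)}=\varepsilon^2$, so your stated lower bounds $c\varepsilon^2$ (for $n\geq 5$) and $c\varepsilon^2\log\tfrac1\varepsilon$ (for $n=4$) are nevertheless correct.
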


In case $n=3$, Benguria, Frank and  Loss (\cite{bfl}) proved the following
\begin{theorem}\label{BFL-Constant}
There holds, for
any $u\in C^{\infty}_{0}(\mathbb{B}^{n})$,
\[
\int_{\mathbb{B}^{3}}(P_{1}u)udV-\frac{1}{4}\int_{\mathbb{B}^{3}}u^{2}dV\geq S_{3, 1}
\left(\int_{\mathbb{B}^{n}}|u|^{6}dV\right)^{\frac{1}{3}},
\]
where $S_{3, 1}$ is the best  first order Sobolev constant.
\end{theorem}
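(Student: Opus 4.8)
The plan is to reduce the inequality to radially symmetric functions by rearrangement, and then to use a substitution that is special to dimension three. Write $n=3$, so that $P_{1}=-\Delta_{\mathbb{H}}-\tfrac34$ and the left-hand side equals the quadratic form $Q[u]:=\int_{\mathbb{B}^{3}}\big(|\nabla_{\mathbb{H}}u|^{2}-u^{2}\big)\,dV=\int_{\mathbb{B}^{3}}\big(-\Delta_{\mathbb{H}}u-u\big)u\,dV$, which is nonnegative because $1=\tfrac{(n-1)^{2}}{4}$ is the bottom of the spectrum of $-\Delta_{\mathbb{H}}$ on $\mathbb{B}^{3}$. Since replacing $u$ by $|u|$ changes neither side and does not increase $Q$, we may assume $u\ge 0$. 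Let $O$ be the centre of $\mathbb{B}^{3}$, $r=d_{\mathbb{H}}(\cdot,O)$ the hyperbolic distance, and let $u^{*}$ be the hyperbolic symmetric decreasing rearrangement of $u$ about $O$ with respect to $dV$. By equimeasurability $\int_{\mathbb{B}^{3}}(u^{*})^{2}\,dV=\int_{\mathbb{B}^{3}}u^{2}\,dV$ and $\int_{\mathbb{B}^{3}}|u^{*}|^{6}\,dV=\int_{\mathbb{B}^{3}}|u|^{6}\,dV$, while the hyperbolic P\'olya--Szeg\H{o} inequality (valid because geodesic balls are the isoperimetric regions of $\mathbb{B}^{3}$) gives $\int_{\mathbb{B}^{3}}|\nabla_{\mathbb{H}}u^{*}|^{2}\,dV\le\int_{\mathbb{B}^{3}}|\nabla_{\mathbb{H}}u|^{2}\,dV$. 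Hence $Q[u]\ge Q[u^{*}]$ and it suffices to prove the statement for $u=u(r)$ radial, nonincreasing, and compactly supported.

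For such $u$ I would pass to geodesic polar coordinates about $O$, in which the metric is $dr^{2}+\sinh^{2}r\,d\omega_{\mathbb{S}^{2}}^{2}$ and $dV=\sinh^{2}r\,dr\,d\omega$, and substitute $u(r)=f(r)/\sinh r$, so that $f(0)=0$ and $f$ is absolutely continuous with compact support. Using $-\Delta_{\mathbb{H}}u=-u''-2\coth r\,u'$ together with the integration by parts $\int_{0}^{\infty}\tfrac{2f f'\cosh r}{\sinh r}\,dr=\int_{0}^{\infty}\tfrac{f^{2}}{\sinh^{2}r}\,dr$ (equivalently, the dimension-three identity $\big(-\Delta_{\mathbb{H}}-1\big)(f/\sinh r)=-f''/\sinh r$), a short computation yields
\[
Q[u]=4\pi\int_{0}^{\infty}|f'(r)|^{2}\,dr,\qquad \int_{\mathbb{B}^{3}}|u|^{6}\,dV=4\pi\int_{0}^{\infty}\frac{|f(r)|^{6}}{\sinh^{4}r}\,dr .
\]
Since $\sinh r\ge r$ for $r\ge0$, one has $\int_{0}^{\infty}\frac{|f|^{6}}{\sinh^{4}r}\,dr\le\int_{0}^{\infty}\frac{|f|^{6}}{r^{4}}\,dr$, so it is enough to prove the stronger bound
\[
4\pi\int_{0}^{\infty}|f'(r)|^{2}\,dr\ \ge\ S_{3,1}\Big(4\pi\int_{0}^{\infty}\frac{|f(r)|^{6}}{r^{4}}\,dr\Big)^{1/3}.
\]

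This is precisely the radial case of the sharp first-order Sobolev inequality \eqref{1.1}--\eqref{1.2}: setting $g(x)=f(|x|)/|x|$ on $\mathbb{R}^{3}$ and integrating by parts once more in the radial variable gives $\int_{\mathbb{R}^{3}}|\nabla g|^{2}\,dx=4\pi\int_{0}^{\infty}|f'|^{2}\,dr$ and $\int_{\mathbb{R}^{3}}|g|^{6}\,dx=4\pi\int_{0}^{\infty}\frac{|f|^{6}}{r^{4}}\,dr$, and \eqref{1.1} with $n=3$, $k=1$ is exactly the displayed inequality. Chaining the three inequalities proves the theorem; the constant $S_{3,1}$ is optimal because test functions concentrating near $O$ only feel $\sinh r\sim r$ and therefore recover the Euclidean Sobolev quotient in the limit, so that equality is never attained. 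I expect the main obstacle to be the rearrangement step: one must invoke the hyperbolic P\'olya--Szeg\H{o} inequality for the relevant function class and check that the radial profile of $u^{*}$ is regular enough to justify the integrations by parts above, which is handled by a routine density argument. Note finally that the substitution $u=f/\sinh r$ linearizes the radial part of $P_{1}-\tfrac14$ only when $n=3$; this is exactly the obstruction that makes the higher-order and higher-dimensional Hardy--Sobolev--Maz'ya inequalities considered in the rest of the paper substantially harder and motivates the Green's function estimates developed below.
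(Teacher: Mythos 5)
The paper does not prove this theorem; it merely quotes it from Benguria--Frank--Loss \cite{bfl}. Your proposal is a correct, self-contained proof, and it follows essentially the same strategy as the cited proof of Benguria, Frank and Loss, recast in the ball model: symmetric decreasing rearrangement (with the hyperbolic P\'olya--Szeg\H{o} inequality) to reduce to radial profiles; the dimension-three substitution $u=f/\sinh r$, which turns $(-\Delta_{\mathbb{H}}-1)u$ into $-f''/\sinh r$ and hence the quadratic form into $4\pi\int_0^\infty |f'|^2\,dr$; the elementary pointwise bound $\sinh r\ge r$, which lowers $\int |f|^6/\sinh^4 r$ to $\int |f|^6/r^4$; and finally the radial Euclidean Sobolev inequality via $g(x)=f(|x|)/|x|$. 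The computations you write out (including the vanishing of the boundary terms $f^2\coth r$ and $f^2/r$ at $r=0$, which uses $f(r)=u(r)\sinh r=O(r)$ for bounded $u$) all check out, and your closing remark correctly identifies why this mechanism is special to $k=1$, $n=3$ and hence why the rest of the paper must proceed by the Green's-function route. The only point to tighten in a formal write-up is the regularity/density argument you flag for the rearranged function before integrating by parts, which you correctly note is routine.
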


From the above Theorem \ref{BFL-Constant}, we can see that the best constant $C_n$ in (\ref{1.3}) is the same as the best sharp Sobolev constant for the first order Sobolev inequality when $n=3$.
When $n \geq 4$, Theorem \ref{Hebey-Constant} shows that the sharp constant $C_n$ in \eqref{1.3}  is strictly less than the Sobolev constant $S_{n, 1}$.
  On upper half space,
    The authors of \cite{te} subsequently offered another proof
 of $C_n < S_{n, 1}$  when $n \geq 4$. Furthermore,   they established the existence
of an extremal function for the sharp constant $C_n$.

 \medskip

 The sharp Sobolev inequalities on hyperbolic ball $\mathbb{B}^{n}$  read as follows (see Hebey \cite{heb} for $k=1$  and Liu \cite{liu} for $2\leq k<\frac{n}{2}$):
 \begin{equation}\label{1.6}
\int_{\mathbb{B}^{n}}(P_{k}u)udV\geq S_{n,k}\left(\int_{\mathbb{B}^{n}}|u|^{\frac{2n}{n-2k}}dV\right)^{\frac{n-2k}{n}},\;\;u\in
C^{\infty}_{0}(\mathbb{B}^{n}),\;\;1\leq k<\frac{n}{2},
\end{equation}
where $S_{n,k}$ is the best $k$-th order Sobolev constant.  We remark that the above sharp Sobolev inequality on hyperbolic spaces (\ref{1.6}) can also follow from the Sharp Sobolev inequality on spheres due to Beckner \cite{Be0}. An alternative proof of ({\ref{1.6}) has been given in \cite{LuYang3} (see Section 7 there).

\medskip

The following higher order Hardy-Sobolev-Maz'ya inequalities have been recently established by the authors in \cite{LuYang3} using Fourier analysis techniques on hyperbolic spaces.

\begin{theorem}\label{HSM1}
Let $2\leq k<\frac{n}{2}$ and $2<p\leq\frac{2n}{n-2k}$. There exists a positive constant $C=C(n, k, p)$ such that for each $u\in C^{\infty}_{0}(\mathbb{B}^{n})$,
\begin{equation}\label{1.11a}
\int_{\mathbb{B}^{n}}(P_{k}u)udV- \prod^{k}_{i=1}\frac{(2i-1)^{2}}{4}\int_{\mathbb{B}^{n}}u^{2}dV\geq C\left(\int_{\mathbb{B}^{n}}|u|^{p}dV\right)^{\frac{2}{p}}.
\end{equation}
\end{theorem}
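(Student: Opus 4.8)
The plan is to pass to the Helgason--Fourier transform on $\mathbb{B}^{n}$, to use the factorization $P_{k}=\prod_{j=1}^{k}\bigl(-\Delta_{\mathbb{H}}-\tfrac{(n-1)^{2}}{4}+(j-\tfrac12)^{2}\bigr)$ together with the fact that $\tfrac{(n-1)^{2}}{4}$ is the bottom of the spectrum of $-\Delta_{\mathbb{H}}$, and then to split $u$ into a low-- and a high--frequency piece treated respectively by Maz'ya's first order inequality \eqref{1.3} (carried over to $\mathbb{B}^{n}$) and by the sharp higher order Sobolev inequality \eqref{1.6}. Write $L=-\Delta_{\mathbb{H}}-\tfrac{(n-1)^{2}}{4}\ge0$ and $a_{j}=j-\tfrac12$, so that $P_{k}=\prod_{j=1}^{k}(L+a_{j}^{2})$ and $\prod_{i=1}^{k}\tfrac{(2i-1)^{2}}{4}=\prod_{j=1}^{k}a_{j}^{2}$. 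Since $-\Delta_{\mathbb{H}}$ acts on the Helgason transform $\widehat u(\lambda,\xi)$ $(\lambda\ge0,\ \xi\in\mathbb S^{n-1})$ by multiplication by $\lambda^{2}+\tfrac{(n-1)^{2}}{4}$, Plancherel's theorem rewrites the left side of \eqref{1.11a} as $\int_{0}^{\infty}\!\!\int_{\mathbb S^{n-1}}g(\lambda)\,\lvert\widehat u(\lambda,\xi)\rvert^{2}\,d\xi\,\lvert\mathbf c(\lambda)\rvert^{-2}\,d\lambda$, where $g(\lambda)=\prod_{j=1}^{k}(\lambda^{2}+a_{j}^{2})-\prod_{j=1}^{k}a_{j}^{2}$ and $\mathbf c$ is the Harish--Chandra function.

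The next ingredient is an elementary factorization of the symbol: expanding the product, $g(\lambda)=\lambda^{2k}+e_{1}\lambda^{2(k-1)}+\dots+e_{k-1}\lambda^{2}$ with all $e_{i}>0$, so that $g(\lambda)\ge e_{k-1}\lambda^{2}$ for every $\lambda$, whereas for $\lambda\ge1$ one has $g(\lambda)=\prod_{j}(\lambda^{2}+a_{j}^{2})\bigl(1-\prod_{j}\tfrac{a_{j}^{2}}{\lambda^{2}+a_{j}^{2}}\bigr)\ge\delta\prod_{j}(\lambda^{2}+a_{j}^{2})$ with $\delta=1-\prod_{j}\tfrac{a_{j}^{2}}{1+a_{j}^{2}}>0$. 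Decompose $u=u_{0}+u_{1}$ by a smooth spectral cut-off of $L^{1/2}$ at frequency one, so that $\widehat{u_{0}}$ is supported in $\{\lambda\le2\}$ and $\widehat{u_{1}}$ in $\{\lambda\ge1\}$; the two pieces are $L^{2}$--orthogonal and commute with every function of $L$, so the Plancherel integral splits into the contributions of $u_{0}$ and of $u_{1}$ (all inequalities below applying to $u_{0},u_{1}$ by density in the relevant form domains).

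On the high piece, $g\ge\delta\prod_{j}(\lambda^{2}+a_{j}^{2})$ and \eqref{1.6} give $\int g\lvert\widehat{u_{1}}\rvert^{2}\,d\mu\ge\delta\int_{\mathbb{B}^{n}}(P_{k}u_{1})u_{1}\,dV\ge\delta\,S_{n,k}\bigl(\int_{\mathbb{B}^{n}}\lvert u_{1}\rvert^{\frac{2n}{n-2k}}dV\bigr)^{\frac{n-2k}{n}}$. On the low piece, $g\ge e_{k-1}\lambda^{2}$ gives $\int g\lvert\widehat{u_{0}}\rvert^{2}\,d\mu\ge e_{k-1}\bigl(\int_{\mathbb{B}^{n}}(P_{1}u_{0})u_{0}\,dV-\tfrac14\int_{\mathbb{B}^{n}}u_{0}^{2}\,dV\bigr)$, and under the conformal isometry between $\mathbb{R}^{n}_{+}$ and $\mathbb{B}^{n}$ the bracket equals $\int_{\mathbb{R}^{n}_{+}}\lvert\nabla v\rvert^{2}dx-\tfrac14\int_{\mathbb{R}^{n}_{+}}\tfrac{v^{2}}{x_{1}^{2}}dx$ for the corresponding $v$, hence by \eqref{1.3} (with $p=\tfrac{2n}{n-2}$, $\gamma=0$) it is $\gtrsim\bigl(\int_{\mathbb{B}^{n}}\lvert u_{0}\rvert^{\frac{2n}{n-2}}dV\bigr)^{\frac{n-2}{n}}$. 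The remaining point is to raise the Lebesgue exponent on $u_{0}$ from $\tfrac{2n}{n-2}$ to $\tfrac{2n}{n-2k}$: since $\widehat{u_{0}}$ is supported in a fixed compact set, $u_{0}=u_{0}*\kappa$ for a fixed radial kernel $\kappa$ whose spherical transform is smooth, compactly supported and $\equiv1$ near $0$, and convolution against such a $\kappa$ maps $L^{p}(\mathbb{B}^{n})\to L^{q}(\mathbb{B}^{n})$ for all $2\le p\le q<\infty$; in particular $\lVert u_{0}\rVert_{L^{2n/(n-2k)}}\lesssim\lVert u_{0}\rVert_{L^{2n/(n-2)}}$. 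Adding the two pieces and using $\lVert u\rVert_{q}\le\lVert u_{0}\rVert_{q}+\lVert u_{1}\rVert_{q}$ yields \eqref{1.11a} for $p=\tfrac{2n}{n-2k}$; the range $2<p\le\tfrac{2n}{n-2}$ follows directly from $g\ge e_{k-1}\lambda^{2}$ and \eqref{1.3} at the corresponding subcritical exponent, and $\tfrac{2n}{n-2}<p<\tfrac{2n}{n-2k}$ follows by interpolating between these two cases via log--convexity of the $L^{p}(dV)$ norms.

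I expect the genuine difficulty to be the last step of the low--frequency analysis. Maz'ya's inequality only reflects the ``second order'' scaling and cannot by itself reach the higher critical exponent, so one must exploit that after the spectral truncation the low--frequency part is band--limited on $\mathbb{B}^{n}$; the Bernstein--type embedding $\lVert u_{0}\rVert_{q}\lesssim\lVert u_{0}\rVert_{p}$ for $2\le p\le q<\infty$ is \emph{not} a consequence of Young's inequality (the cut-off kernel $\kappa$ lies in $L^{s}(\mathbb{B}^{n})$ only for $s\ge2$, which is too weak here because $k<\tfrac n2$) but relies on the Kunze--Stein phenomenon on $\mathbb{B}^{n}$, i.e.\ on its exponential volume growth. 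A secondary, bookkeeping-type point is the precise conformal change of variables identifying $\int_{\mathbb{B}^{n}}(Lu)u\,dV$ and the $L^{p}(dV)$ norms with Maz'ya's Euclidean half-space quantities, which must be set up so that \eqref{1.3} is directly applicable.
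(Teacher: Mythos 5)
Your high/low frequency decomposition is genuinely different from the route in \cite{LuYang3} to which the paper defers for this theorem. There, and throughout Section~4 of the present paper (see Lemma~\ref{lm4.4} and Theorem~\ref{th4.1}), the strategy is a \emph{single} global estimate: one compares symbols under Plancherel to show that $P_k-\prod_j\frac{(2j-1)^2}{4}$ dominates the quadratic form of a product operator whose Green's function is pointwise bounded by a constant multiple of $\bigl(2\sinh\frac{\rho}{2}\bigr)^{-(n-2k)}$, and then one invokes the sharp Hardy--Littlewood--Sobolev inequality on $\mathbb{B}^n$. No spectral cutoff is needed. Your plan instead splits off the low frequencies and treats them with the first-order Maz'ya inequality, which is an attractive idea but creates the problem you yourself flag.

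The gap is exactly the ``Bernstein'' step. After the spectral truncation you need $\lVert u_0\rVert_{L^{2n/(n-2k)}}\lesssim \lVert u_0\rVert_{L^{2n/(n-2)}}$ for $u_0=u_0*\kappa$, where $\kappa$ is the inverse spherical transform of a smooth, compactly supported cutoff $\phi$. On $\mathbb{B}^n$ such a $\kappa$ decays only like $e^{-\frac{n-1}{2}\rho}(1+\rho)^{-N}$, hence $\kappa\in L^s$ only for $s\ge 2$; as you note, Young's inequality fails for the pair of exponents you need. The invocation of the Kunze--Stein phenomenon, however, does not close the gap. Kunze--Stein gives $L^2*L^p\subset L^2$ for $1\le p<2$, and together with $\kappa\in L^2$ and interpolation this yields $T\colon L^p\to L^q$ only for $p\le 2\le q$; it gives nothing in the region $2<p<q<\infty$, which is precisely where you need it. There is moreover a structural warning sign: because $\phi$ is compactly supported (hence cannot extend holomorphically to any tube $|\mathrm{Im}\,\lambda|<\varepsilon$), the Clerc--Stein necessary condition \cite{CS} shows that $\phi(L)$ is not an $L^p(\mathbb{B}^n)$--bounded multiplier for any $p\ne 2$, so the standard multiplier machinery cannot be borrowed either. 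A Nikolskii-type inequality for band-limited functions on hyperbolic space may well be true, but it requires a separate argument that your proposal does not supply, and ``Kunze--Stein'' is not that argument. Since the cases $2n/(n-2)<p\le 2n/(n-2k)$ are obtained in your scheme by interpolating with the critical endpoint, this single gap blocks the whole critical and supercritical-for-$P_1$ range. The subcritical range $2<p\le 2n/(n-2)$, by contrast, is handled correctly by $g(\lambda)\ge e_{k-1}\lambda^2$ together with Maz'ya's inequality, and the high-frequency analysis via $g(\lambda)\ge\delta\prod_j(\lambda^2+a_j^2)$ and \eqref{1.6} is sound.
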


This improves substantially the  Poincar\'e-Sobolev inequalities  (\ref{1.6}) for higher order derivatives on the hyperbolic spaces $\mathbb{B}^n$ by subtracting a Hardy term on the left hand side.
If $p=\frac{2n}{n-2k}$, then by (\ref{1.6}), the sharp constant  in (\ref{1.11a}) is less than or equal to the best $k$-th order Sobolev constant $S_{n, k}$.

As an application of Theorem \ref{HSM1}, we have the following Hardy-Sobolev-Maz'ya inequalities for higher order derivatives (see \cite{LuYang3}):
\begin{theorem}\label{HSM2}
Let $2\leq k<\frac{n}{2}$ and $2<p\leq\frac{2n}{n-2k}$. There exists a positive constant $C$ such that for each $u\in C^{\infty}_{0}(\mathbb{R}^{n}_{+})$,
\begin{equation}\label{1.12}
\int_{\mathbb{R}^{n}_{+}}|\nabla^{k}u|^{2}dx- \prod^{k}_{i=1}\frac{(2i-1)^{2}}{4}\int_{\mathbb{R}^{n}_{+}}\frac{u^{2}}{x^{2k}_{1}}dx\geq C\left(\int_{\mathbb{R}^{n}_{+}}x^{\gamma}_{1}|u|^{p}dx\right)^{\frac{2}{p}},
\end{equation}
where $\gamma=\frac{(n-2k)p}{2}-n$.

\medskip

In terms of the Poincar\'e ball model $\mathbb{B}^{n}$, inequality (\ref{1.11a}) can be written as follows:
\begin{equation}\label{1.14}
\int_{\mathbb{B}^{n}}|\nabla^{k}u|^{2}dx- \prod^{k}_{i=1}(2i-1)^{2}\int_{\mathbb{B}^{n}}\frac{u^{2}}{(1-|x|^{2})^{2k}}dx\geq C\left(\int_{\mathbb{B}^{n}}(1-|x|^{2})^{\gamma}|u|^{p}dx\right)^{\frac{2}{p}}.
\end{equation}
\end{theorem}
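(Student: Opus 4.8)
The plan is to derive both \eqref{1.14} and \eqref{1.12} from the hyperbolic-ball inequality \eqref{1.11a} of Theorem \ref{HSM1} by passing to Euclidean coordinates, the bridge being the conformal covariance of the GJMS operators. Concretely, I would use the fact that under a conformal change $\hat g=e^{2\omega}g$ one has $P_k^{\hat g}(f)=e^{-\frac{n+2k}{2}\omega}P_k^{g}\bigl(e^{\frac{n-2k}{2}\omega}f\bigr)$, together with the fact that $P_k$ for the flat metric on $\mathbb{R}^n$ is the polyharmonic operator $(-\Delta)^k$. The Poincar\'e ball is $g_{\mathbb{B}}=\rho_{\mathbb{B}}^{2}\delta$ with $\rho_{\mathbb{B}}=\tfrac{2}{1-|x|^2}$ and $dV=\rho_{\mathbb{B}}^{\,n}dx$, while the upper half space $g_{+}=\rho_{+}^{2}\delta$, $\rho_{+}=x_1^{-1}$, $dV_{\mathbb{H}}=\rho_{+}^{\,n}dx$, is an isometric model of the same space; the isometry identifies $C_0^\infty(\mathbb{B}^n)$ with the test functions of the half-space model.

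For \eqref{1.14}, given $u\in C_0^\infty(\mathbb{B}^n)$ I would set $v=\rho_{\mathbb{B}}^{-\frac{n-2k}{2}}u$, still in $C_0^\infty(\mathbb{B}^n)$ since $\rho_{\mathbb{B}}$ is smooth and positive on $\{|x|<1\}$ and $u$ is supported in the interior, and then apply the covariance rule with $g=g_{\mathbb{B}}$, $e^{\omega}=\rho_{\mathbb{B}}^{-1}$, to get $P_k v=\rho_{\mathbb{B}}^{-\frac{n+2k}{2}}(-\Delta)^k u$. The powers of $\rho_{\mathbb{B}}$ then cancel completely and, after integrating by parts (legitimate since $u$ is compactly supported),
\begin{equation*}
\int_{\mathbb{B}^n}(P_kv)v\,dV=\int_{\mathbb{B}^n}\bigl((-\Delta)^ku\bigr)u\,dx=\int_{\mathbb{B}^n}|\nabla^ku|^{2}\,dx .
\end{equation*}
Likewise $\int_{\mathbb{B}^n}v^{2}\,dV=2^{2k}\int_{\mathbb{B}^n}\tfrac{u^{2}}{(1-|x|^2)^{2k}}\,dx$ and $\int_{\mathbb{B}^n}|v|^{p}\,dV=2^{-\gamma}\int_{\mathbb{B}^n}(1-|x|^2)^{\gamma}|u|^{p}\,dx$ with $\gamma=\frac{(n-2k)p}{2}-n$. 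Inserting these three identities into \eqref{1.11a} applied to $v$, and using $\prod_{i=1}^{k}\frac{(2i-1)^{2}}{4}\cdot 2^{2k}=\prod_{i=1}^{k}(2i-1)^{2}$, produces \eqref{1.14} (the constant changes to $C\,2^{-2\gamma/p}$, still positive).

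For \eqref{1.12} I would repeat the computation directly in the half-space model, which avoids re-deriving a M\"obius change of variables: given $u\in C_0^\infty(\mathbb{R}^n_+)$, set $v=x_1^{\frac{n-2k}{2}}u=\rho_+^{-\frac{n-2k}{2}}u$. Since $\operatorname{supp}u$ is a compact subset of $\{x_1>0\}$, $v$ is a legitimate test function for the half-space model and Theorem \ref{HSM1} applies. The same manipulations give $\int(P_kv)v\,dV_{\mathbb{H}}=\int_{\mathbb{R}^n_+}|\nabla^ku|^2\,dx$, $\int v^2\,dV_{\mathbb{H}}=\int_{\mathbb{R}^n_+}\tfrac{u^2}{x_1^{2k}}\,dx$ and $\int|v|^p\,dV_{\mathbb{H}}=\int_{\mathbb{R}^n_+}x_1^{\gamma}|u|^p\,dx$; since now $\rho_+=x_1^{-1}$ contributes no powers of $2$, the Hardy coefficient $\prod_{i=1}^k\frac{(2i-1)^2}{4}$ is unchanged and \eqref{1.11a} becomes exactly \eqref{1.12}. (Alternatively one could push \eqref{1.14} forward by a conformal diffeomorphism $\mathbb{B}^n\to\mathbb{R}^n_+$, using the conformal covariance of all three terms.)

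I do not expect a genuine obstacle here: this is an application, and the substance lies in Theorem \ref{HSM1}. The steps that need care are the precise normalization of the conformal covariance of $P_k$ and the verification that all powers of the conformal factor cancel in $\int(P_kv)v\,dV$; the identity $\int((-\Delta)^ku)u\,dx=\int|\nabla^ku|^2\,dx$ for compactly supported $u$ (integration by parts, with the standard convention for $|\nabla^k u|^2$); checking that the weight $\rho^{-\frac{n-2k}{2}}$, whose exponent need not be an integer, is smooth on the relevant open set and hence maps $C_0^\infty$ there bijectively onto the test functions of the hyperbolic model; and bookkeeping of the powers of $2$ and the sign of $\gamma$ so as to confirm that the resulting constants remain strictly positive.
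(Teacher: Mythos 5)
Your proposal is correct and follows the standard route, which is also the one implicit in this paper: the hyperbolic inequality \eqref{1.11a} is transferred to Euclidean form via the conformal covariance identity \eqref{5.5} (equivalently, its half-space analogue), and the paper itself only cites \cite{LuYang3} for this translation. Your bookkeeping of the conformal factors, the resulting powers of $2$, and the Hardy coefficient $\prod_{i=1}^k\frac{(2i-1)^2}{4}\cdot 2^{2k}=\prod_{i=1}^k(2i-1)^2$ is all accurate.
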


Though it was not known in general yet whether the constant $C$ on the right hand sides of the inequalities (\ref{1.11a}), (\ref{1.12}) and (\ref{1.14}) in Theorems \ref{HSM1} and \ref{HSM2} respectively is the same as the best Sobolev constant $S_{n, k}$, we have established in \cite{LuYang3}
in the case of $n=5$ and $k=2$ that the best constant for the Hardy-Sobolev-Maz'ya inequality in (\ref{1.11a}), (\ref{1.12}) and (\ref{1.14})
  coincides  with the best Sobolev constant $S_{5, 2}$. In fact, the following has been proved in \cite{LuYang3}.
\begin{theorem}\label{th1.5}
There holds,  for each $u\in C^{\infty}_{0}(\mathbb{B}^{5})$,
\begin{equation}\label{1.15}
\int_{\mathbb{B}^{5}}(P_{2}u)udV-\frac{9}{16}\int_{\mathbb{B}^{5}}u^{2}dV\geq S_{5,2}\left(\int_{\mathbb{B}^{n}}|u|^{10}dV\right)^{\frac{1}{5}}.
\end{equation}
In terms of the Poincar\'e ball model $\mathbb{B}^{5}$ and the Poincar\'e half space model $\mathbb{H}^{5}$, respectively, inequality (\ref{1.15}) is  equivalent to the following:
\begin{equation*}
\int_{\mathbb{B}^{5}}|\Delta f|^{2}dx- 9\int_{\mathbb{B}^{5}}\frac{f^{2}}{(1-|x|^{2})^{4}}dx\geq S_{5,2}\left(\int_{\mathbb{B}^{5}}|f|^{10}dx\right)^{\frac{1}{5}},\;\;f\in C^{\infty}_{0}(\mathbb{B}^{5});
\end{equation*}
\begin{equation*}
\int_{\mathbb{R}_{+}^{5}}|\Delta g|^{2}dx- \frac{9}{16}\int_{\mathbb{R}_{+}^{5}}\frac{g^{2}}{x_{1}^{4}}dx\geq S_{5,2}\left(\int_{\mathbb{R}_{+}^{5}}|g|^{10}dx\right)^{\frac{1}{5}},\;\;g\in C^{\infty}_{0}(\mathbb{R}_{+}^{5}).
\end{equation*}
\end{theorem}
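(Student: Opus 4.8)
The plan is to prove the inequality \eqref{1.15} by computing the Green's function of $P_2-\tfrac{9}{16}$ on $\mathbb B^5$ in closed form, dominating it pointwise by the transplant to $\mathbb B^5$ of the fundamental solution of $(-\Delta)^2$ on $\mathbb R^5$, and then invoking Lieb's sharp Hardy--Littlewood--Sobolev (HLS) inequality. The starting point is a spectral factorisation: on $\mathbb B^5$ the operator $-\Delta_{\mathbb H}$ has spectrum $[4,\infty)$ and $P_2$ has Helgason--Fourier symbol $(\lambda^2+\tfrac14)(\lambda^2+\tfrac94)$, so
\begin{equation*}
P_2-\tfrac{9}{16}=B\bigl(B+\tfrac52\bigr),\qquad B:=-\Delta_{\mathbb H}-\tfrac{(n-1)^2}{4}=-\Delta_{\mathbb H}-4,
\end{equation*}
both factors being nonnegative operators; by partial fractions $\bigl(P_2-\tfrac{9}{16}\bigr)^{-1}=\tfrac25 B^{-1}-\tfrac25\bigl(B+\tfrac52\bigr)^{-1}$, so the Green's function of $P_2-\tfrac{9}{16}$ on $(\mathbb B^5,dV)$ is the radial kernel
\begin{equation*}
G(\rho)=\tfrac25\bigl(G_B(\rho)-G_{B+5/2}(\rho)\bigr),\qquad \rho=d_{\mathbb H}(x,y),
\end{equation*}
with $G_B$ and $G_{B+5/2}$ the Green's functions of $B$ and $B+\tfrac52$.

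The next step is to compute $G_B$ and $G_{B+5/2}$ explicitly. Since $n=5$ is odd, the resolvent kernels of $-\Delta_{\mathbb H}-z$ on $\mathbb B^5$ reduce from associated Legendre functions to elementary functions of $\cosh\rho$; I would extract them from the Helgason--Fourier inversion formula (or from the radial ODE with the correct singularity at $\rho=0$ and exponential decay at $\infty$), these being the formulas made precise in Theorem~\ref{th1.3}. From the closed forms one verifies that $G_B-G_{B+5/2}>0$ (also visible abstractly, since $G_B-G_{B+5/2}=\int_0^{5/2}(B+s)^{-2}\,ds$ has a positive kernel), that $G(\rho)$ decays exponentially as $\rho\to\infty$, and that near $\rho=0$ the leading $\rho^{-3}$ singularities of $G_B$ and $G_{B+5/2}$ cancel, leaving $G(\rho)=\gamma_{5,2}\,\rho^{-1}+O(\rho)$ with $\gamma_{5,2}=\tfrac1{16\pi^2}$ --- exactly the constant in the fundamental solution $\gamma_{5,2}\,|x-y|^{-1}$ of $(-\Delta)^2$ on $\mathbb R^5$.

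The proof then reduces to a single pointwise estimate. The substitution $g=x_1^{-1/2}u$ identifies the quadratic form of $P_2-\tfrac{9}{16}$ on $(\mathbb B^5,dV)$ with that of $(-\Delta)^2-\tfrac{9}{16}x_1^{-4}$ on $(\mathbb R^5_+,dx)$ (this is the equivalence recorded in the statement), whence the half-space Green's function is $G_+(\xi,\eta)=(\xi_1\eta_1)^{-1/2}G(\rho)$; using $\cosh\rho=1+\tfrac{|\xi-\eta|^2}{2\xi_1\eta_1}$, i.e.\ $|\xi-\eta|=2\sqrt{\xi_1\eta_1}\,\sinh(\rho/2)$, the weights cancel and the required bound $G_+(\xi,\eta)\le\gamma_{5,2}|\xi-\eta|^{-1}$ becomes the one-variable inequality
\begin{equation*}
\tfrac25\bigl(G_B(\rho)-G_{B+5/2}(\rho)\bigr)\ \le\ \frac{\gamma_{5,2}}{2\sinh(\rho/2)},\qquad \rho>0 ,
\end{equation*}
to be saturated only in the limit $\rho\to0^+$. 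This pointwise bound is the crux and the main obstacle: it is a concrete but delicate inequality between explicit elementary functions which has to hold for all $\rho>0$ and be an equality exactly at $\rho=0$ --- it is this borderline equality that pins the bounding constant at the Euclidean value $\gamma_{5,2}$, hence produces the full Sobolev constant $S_{5,2}$ rather than a smaller one. I would establish it by reducing to monotonicity in $t=\cosh\rho$ of a quotient of the explicit hyperbolic expressions, together with the Taylor expansion at $\rho=0$ recorded above.

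Granting the pointwise bound, the conclusion is routine. For $u\in C^\infty_0(\mathbb B^5)$ and $\phi$ with $\|\phi\|_{L^{10/9}(\mathbb B^5,dV)}\le1$, the Cauchy--Schwarz inequality applied to the positive operator $P_2-\tfrac{9}{16}$ gives $\bigl(\int_{\mathbb B^5}u\phi\,dV\bigr)^2\le\langle(P_2-\tfrac{9}{16})u,u\rangle_{dV}\cdot\iint G(\rho)\,\phi(x)\phi(y)\,dV\,dV$; transplanting to $\mathbb R^5_+$ via $\psi=x_1^{-9/2}\phi$ (for which $\|\psi\|_{L^{10/9}(\mathbb R^5)}=\|\phi\|_{L^{10/9}(dV)}$ and $\iint G(\rho)\phi\phi\,dV\,dV=\iint G_+(\xi,\eta)\psi(\xi)\psi(\eta)\,d\xi\,d\eta$), the pointwise bound followed by Lieb's sharp HLS inequality yields
\begin{equation*}
\iint G(\rho)\,\phi(x)\phi(y)\,dV\,dV\ \le\ \gamma_{5,2}\iint\frac{\psi(\xi)\psi(\eta)}{|\xi-\eta|}\,d\xi\,d\eta\ \le\ S_{5,2}^{-1},
\end{equation*}
since $\gamma_{5,2}|\xi-\eta|^{-1}$ is the kernel of $(-\Delta)^{-2}$ on $\mathbb R^5$ and $S_{5,2}^{-1}$ is the associated sharp HLS constant. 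Taking the supremum over $\phi$ gives $\|u\|_{L^{10}(dV)}^2\le S_{5,2}^{-1}\langle(P_2-\tfrac{9}{16})u,u\rangle_{dV}$, which is \eqref{1.15}; since the sharp constant in such an inequality cannot exceed $S_{5,2}$ (as noted after Theorem~\ref{HSM1}), it equals $S_{5,2}$, and the two displayed reformulations follow from the substitutions relating $u$, $f$ and $g$ together with $dV=x_1^{-5}dx$.
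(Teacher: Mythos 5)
Your overall strategy --- reduce to a pointwise bound on a Green's function and finish with sharp Hardy--Littlewood--Sobolev duality --- is the same as the paper's, but your choice of \emph{which} Green's function to bound diverges from the paper's, and that divergence creates a genuine gap. The paper does \emph{not} attempt to estimate the Green's function of $P_2-\tfrac{9}{16}$ itself. Instead (see the proof of Theorem~\ref{th1.1} and Lemma~\ref{lm4.4}), it first uses Plancherel to compare symbols: writing $B=-\Delta_{\mathbb H}-4$, the symbol of $P_2-\tfrac{9}{16}$ on $\mathbb B^5$ is $\tfrac{1}{16}\lambda^2(\lambda^2+10)$, which dominates $\tfrac{1}{16}\lambda^2(\lambda^2+1)$, the symbol of $\prod_{j=0}^{1}\bigl(j^2-4-\Delta_{\mathbb H}\bigr)$; so the quadratic form of $P_2-\tfrac{9}{16}$ dominates that of the simpler product operator. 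The Green's function of \emph{that} product is computed in closed form in Lemma~\ref{lm4.2}/(\ref{4.14}) and equals $\frac{1}{\gamma_5(4)}\frac{1}{(\cosh\frac{\rho}{2})^{3}\cdot 2\sinh\frac{\rho}{2}}$, so the pointwise bound $\le\frac{1}{\gamma_5(4)}\frac{1}{2\sinh\frac{\rho}{2}}$ is immediate because $\cosh\frac{\rho}{2}\ge 1$. No nontrivial pointwise inequality remains.

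Your proposal instead exactly factors $P_2-\tfrac{9}{16}=B\bigl(B+\tfrac52\bigr)$ and commits to bounding the kernel $\tfrac25\bigl(G_B-G_{B+5/2}\bigr)$ directly. Two problems arise. First, your claim that the relevant resolvent kernels ``reduce \ldots to elementary functions of $\cosh\rho$'' and that ``these [are] the formulas made precise in Theorem~\ref{th1.3}'' is off: Theorem~\ref{th1.3} and Lemmas~\ref{lm4.1}--\ref{lm4.2}, \ref{lm6.1}--\ref{lm6.2} give closed forms only for integer or half-integer shifts (where the Legendre degree $\theta_n(\lambda)=\sqrt{\lambda+\tfrac{(n-1)^2}{4}}-\tfrac12$ is special). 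Here $B+\tfrac52=-\Delta_{\mathbb H}-\tfrac32$, so $\theta_5(-\tfrac32)=\sqrt{5/2}-\tfrac12$ is irrational, and $G_{B+5/2}(\rho)=\frac{(\cosh\rho+c\sinh\rho)e^{-c\rho}}{8\pi^2\sinh^3\rho}$ with $c=\sqrt{5/2}$ --- elementary, but involving $e^{-\sqrt{5/2}\,\rho}$, i.e.\ an irrational power of $\cosh\rho-\sinh\rho$, not a rational function of $\cosh\rho$. Second, and more importantly, the resulting pointwise inequality $\tfrac25\bigl(G_B-G_{B+5/2}\bigr)\le\frac{1}{32\pi^2\sinh(\rho/2)}$ is the entire content of the argument, yet you only assert a plan (``reducing to monotonicity in $t=\cosh\rho$ of a quotient'') and never carry it out; with the transcendental factor $e^{-c\rho}$ present, the claimed monotonicity is not obvious and would require real work. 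The bound \emph{is} true --- for instance one can deduce it from the paper's bound on $[B(B+1)]^{-1}$ via the domination $[B(B+\tfrac52)]^{-1}\le [B(B+1)]^{-1}$ of positive kernels --- but then you would essentially have rediscovered the paper's route of comparing to a simpler operator, and your proposal as written does not make that move. As stated, the key estimate is asserted, not proved, so the proof is incomplete.
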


 We also illustrated in \cite{LuYang3} that the best constant for the Hardy-Sobolev-Maz'ya inequality does not coincide with the best Sobolev constant  for $n=6$ and $k=2$ (see Remark 8.3 and Lemma 8.4 there).
More recently, Hong has shown in \cite{Hong} that the sharp constant of  Poincar\'e-Sobolev inequality and  the Hardy-Sobolev-Maz'ya inequality for $n=7$ and $k=3$ is also
  the same as    the Sobolev constant $S_{7, 3}$, by adapting the argument as done  in \cite{LuYang3} for the case $n=5$ and $k=2$.

\medskip

We further conjectured in \cite{LuYang3} that the sharp constant of  Poincar\'e-Sobolev inequality and  the Hardy-Sobolev-Maz'ya inequality for $n\ge 9$ and odd and $k=\frac{n-1}{2}$ also
  coincides  with the Sobolev constant $S_{n, \frac{n-1}{2}}$.

One of the our main results in this paper is to confirm this conjecture and give a unified proof for $ k=\frac{n-1}{2}$ and all $n\ge 5$ odd.

\begin{theorem} \label{th1.1} Let $n\geq5$ be odd. There holds,  for each $u\in C^{\infty}_{0}(\mathbb{B}^{n})$,
\begin{equation*}
\int_{\mathbb{B}^{n}}(P_{(n-1)/2}u)udV- \prod^{(n-1)/2}_{j=1}\frac{(2j-1)^{2}}{4}\int_{\mathbb{B}^{n}}u^{2}dV\geq S_{n,(n-1)/2}\left(\int_{\mathbb{B}^{n}}|u|^{2n}dV\right)^{\frac{1}{n}}.
\end{equation*}
In terms of  the Poincar\'e ball model and the Poincar\'e half space model respectively, the inequality above is  equivalent to the following:
\begin{equation*}
\int_{\mathbb{B}^{n}}|\nabla^{\frac{n-1}{2}}f|^{2}dx- \prod^{(n-1)/2}_{j=1}(2j-1)^{2}\int_{\mathbb{B}^{n}}\frac{f^{2}}{(1-|x|^{2})^{n-1}}dx\geq S_{n,(n-1)/2}\left(\int_{\mathbb{B}^{n}}|f|^{2n}dx\right)^{\frac{1}{n}},\;\;f\in C^{\infty}_{0}(\mathbb{B}^{n});
\end{equation*}
\begin{equation*}
\int_{\mathbb{R}^{n}_{+}}|\nabla^{\frac{n-1}{2}}g|^{2}dx- \prod^{(n-1)/2}_{j=1}\frac{(2j-1)^{2}}{4}\int_{\mathbb{R}^{n}_{+}}\frac{g^{2}}{x^{n-1}_{1}}dx\geq S_{n,(n-1)/2}\left(\int_{\mathbb{R}_{+}^{n}}|g|^{2n}dx\right)^{\frac{1}{n}},\;\;g\in C^{\infty}_{0}(\mathbb{R}_{+}^{n}).
\end{equation*}
\end{theorem}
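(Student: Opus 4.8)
The plan is to reduce the problem to a pointwise Green's function estimate on the hyperbolic space $\mathbb{B}^n$, following the strategy that worked for $(n,k)=(5,2)$ in \cite{LuYang3} and for $(7,3)$ in \cite{Hong}, but now carried out uniformly for all odd $n\ge 5$ with $k=\frac{n-1}{2}$. Write $L:=P_{k}-\prod_{j=1}^{k}\frac{(2j-1)^{2}}{4}$ with $k=\frac{n-1}{2}$; since $P_{k}=\prod_{j=1}^{k}\bigl(-\Delta_{\mathbb{H}}-\frac{(n-1)^2}{4}+(j-1/2)^2\bigr)$ and the bottom of the spectrum of $-\Delta_{\mathbb{H}}$ is $\frac{(n-1)^2}{4}$, the operator $L$ is a positive operator whose associated quadratic form is exactly the left-hand side of the claimed inequality. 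The key point is that the Sobolev constant $S_{n,k}$ with $k=\frac{n-1}{2}$ and $2k=n-1$ corresponds to the exponent $\frac{2n}{n-2k}=2n$, i.e. the endpoint, and the extremals of the Euclidean inequality \eqref{1.1} are known explicitly. So the inequality will follow if we can show that for the Green's function $G(x,y)$ of $L$ on $\mathbb{B}^n$ one has the pointwise comparison
\begin{equation*}
G(x,y)\ \le\ G_{\mathbb{R}^n}^{(k)}(x,y)
\end{equation*}
after transferring to the half-space (or Euclidean) picture, where $G_{\mathbb{R}^n}^{(k)}$ is the Green's function (Riesz kernel) of $(-\Delta)^{k}$; this domination, together with the sharp Euclidean $k$-th order Sobolev/HLS inequality, yields the sharp constant. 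This is exactly the mechanism announced in the abstract and in Theorem~\ref{th1.3} of the paper.

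The steps, in order, would be: (1) Compute the Green's function of $-\Delta_{\mathbb{H}}-\frac{(n-1)^2}{4}$ on $\mathbb{B}^n$ explicitly using the Fourier/spherical analysis on hyperbolic space (the Helgason–Fourier transform), obtaining a closed form in terms of the geodesic distance $\rho$; because $\frac{(n-1)^2}{4}$ is precisely the spectral gap, this resolvent at the edge of the spectrum has a clean special-function expression. (2) Since $L$ factors as a product of shifted such operators, $L=\prod_{j=1}^{k}\bigl(-\Delta_{\mathbb{H}}-\frac{(n-1)^2}{4}+(j-1/2)^2\bigr)$, express its Green's function as an iterated convolution (on the hyperbolic space) of the individual Green's functions, or diagonalize simultaneously on the Fourier side and write $G$ as a single spectral integral $\int_0^\infty \prod_{j=1}^k (\lambda^2+(j-1/2)^2)^{-1}\,(\text{Plancherel density})\,d\lambda$. (3) Use a partial-fraction decomposition of $\prod_{j=1}^{k}(\lambda^2+(j-1/2)^2)^{-1}$ to reduce to a finite linear combination of the order-one resolvent kernels, and thereby get a precise expression and an optimal pointwise upper bound for $G$, i.e. prove Theorem~\ref{th1.3}. (4) Transfer everything to the half-space $\mathbb{R}^n_+$ via the conformal change of variables that identifies $(\mathbb{B}^n, ds^2)$ with $(\mathbb{R}^n_+, \text{Poincar\'e metric})$, tracking the conformal weights $(1-|x|^2)$ resp. $x_1$; this turns the hyperbolic HSM inequality into \eqref{1.12}. (5) Combine the pointwise bound $G\le G^{(k)}_{\mathbb{R}^n}$ with the sharp Euclidean inequality \eqref{1.1} (equivalently, the sharp HLS inequality with the Riesz kernel) to conclude that the best constant is $S_{n,k}$; the lower bound for the constant is immediate since \eqref{1.6} already gives "$\ge$'' with a constant $\le S_{n,k}$, so it suffices to prove "$\ge S_{n,k}$'', and the reverse (that it cannot exceed $S_{n,k}$) follows from testing on a sequence of concentrating Euclidean extremals.

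The main obstacle is Step (3): establishing the \emph{optimal} pointwise upper bound for the Green's function of the product operator uniformly in $n$. The partial-fraction coefficients in $\prod_{j=1}^{k}(\lambda^2+(j-1/2)^2)^{-1}$ have alternating signs, so the resulting sum of order-one kernels is not term-by-term positive, and one must show both that $G>0$ and that $G(x,y)\le c_{n,k}\,d_{\mathbb{R}^n}(x,y)^{-(n-2k)}=c_{n,k}\,|x-y|^{-1}$ near the diagonal with precisely the Euclidean constant, with no positive error term that would spoil sharpness. This requires genuinely delicate cancellation estimates among Legendre/Gegenbauer-type functions and a careful analysis of the behavior of $G$ both near the diagonal (where it must match the Euclidean Riesz kernel) and at infinity on $\mathbb{B}^n$ (where exponential decay must be quantified). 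A secondary difficulty is keeping all constants explicit enough through the conformal transfer in Step (4) so that the constant in the final inequality is identified as exactly $S_{n,(n-1)/2}$ and not merely comparable to it; this is where the special structure $2k=n-1$ is essential, since it is precisely the value for which the hyperbolic and Euclidean Green's functions have matching homogeneity and the bound can be sharp.
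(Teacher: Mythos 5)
Your step (2) contains a genuine error that breaks the argument: you define $L:=P_{k}-\prod_{j=1}^{k}\frac{(2j-1)^{2}}{4}$ and then assert that $L$ ``factors as a product of shifted such operators, $L=\prod_{j=1}^{k}\bigl(-\Delta_{\mathbb{H}}-\frac{(n-1)^2}{4}+(j-1/2)^2\bigr)$.'' That product is $P_{k}$, not $L$. Because $L$ is $P_k$ minus a nonzero constant, it has no factorization into second-order pieces, so the iterated-convolution / partial-fraction machinery in your step (3) does not apply to $L^{-1}$ (it applies to $P_k^{-1}$, which is the content of Theorem~\ref{th1.3}). If you carried out steps (2)--(5) for $P_k$ you would recover the already-known Poincar\'e--Sobolev inequality (\ref{1.6}); the whole point of Theorem~\ref{th1.1} is the extra subtracted Hardy term, and your proposal does not actually engage with that subtraction.

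The missing idea is a spectral comparison with a \emph{different} factorizable operator, rather than a Green's function computation for $L$ itself. Set $\tilde L:=\prod_{j=0}^{(n-3)/2}\bigl(j^{2}-\tfrac{(n-1)^{2}}{4}-\Delta_{\mathbb{H}}\bigr)$. On the Helgason--Fourier side, the multiplier of $L$ is $\prod_{j=1}^{(n-1)/2}\tfrac{(2j-1)^{2}+\lambda^{2}}{4}-\prod_{j=1}^{(n-1)/2}\tfrac{(2j-1)^{2}}{4}$, and the multiplier of $\tilde L$ is $\prod_{j=0}^{(n-3)/2}\tfrac{j^{2}+\lambda^{2}}{4}$; an elementary polynomial inequality (inequality (\ref{4.18}) in the paper, using $(2j+1)^2\geq j^2$) shows the former dominates the latter pointwise in $\lambda$, hence $\langle Lu,u\rangle\geq\langle\tilde Lu,u\rangle$ by Plancherel. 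Now $\tilde L$ \emph{does} factor, and its Green's function (Lemma~\ref{lm4.2} with $k=0$, $l=m=(n-1)/2$, $n=2m+1$) collapses to the single explicit term
$\frac{1}{\gamma_n(n-1)}\,\frac{1}{\bigl(\cosh\frac{\rho}{2}\bigr)^{n-2}\,2\sinh\frac{\rho}{2}}\leq\frac{1}{\gamma_n(n-1)}\,\frac{1}{2\sinh\frac{\rho}{2}}$,
with no cancellation issues whatsoever, and this is precisely the kernel for which Beckner's sharp hyperbolic HLS inequality (Theorem~\ref{th4.1}) with $\lambda=1$, $p=\tfrac{2n}{2n-1}$ gives the constant $S_{n,(n-1)/2}$. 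Your worry in step (3) about alternating-sign partial-fraction coefficients therefore does not arise on the paper's route, and the pointwise bound $G_L\leq G^{(k)}_{\mathbb{R}^n}$ you aim for is neither established nor needed --- the proof works entirely through the form inequality $L\geq\tilde L$ and the one-term kernel of $\tilde L$.
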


Our second main result is:
\begin{theorem} \label{th1.2}
Let $2\leq k<\frac{n}{2}$.  Suppose that there exists $\lambda\in\mathbb{R}$ such that for
any $u\in C^{\infty}_{0}(\mathbb{B}^{n})$,
\[
\int_{\mathbb{B}^{n}}(P_{k}u)udV+\lambda \int_{\mathbb{B}^{n}}u^{2}dV\geq S_{n,k}
\left(\int_{\mathbb{B}^{n}}|u|^{\frac{2n}{n-2k}}dV\right)^{\frac{n-2k}{n}},
\]
where $S_{n,k}$ is the best $k$-th order Sobolev constant. Then we have the following:

(1). If $n\geq 4k$, then $\lambda\geq0$.

(2). If $2k+2\leq n< 4k$, then
\begin{equation}\label{1.1}
\begin{split}
\lambda
\geq-\frac{\Gamma(n/2)\Gamma(k)\sum\limits^{k-1}_{j=0}
\frac{\Gamma(j+\frac{n-2k}{2})}{\Gamma(j+1)\Gamma(\frac{n-2k}{2})}}{2^{\frac{n+2k}{2}}\Gamma(\frac{n-2k}{2})\int^{1}_{0}[r^{2k-n}-\sum\limits^{k-1}_{j=0}
\frac{\Gamma(j+\frac{n-2k}{2})}{\Gamma(j+1)\Gamma(\frac{n-2k}{2})}
\left(1-r^{2}\right)^{j}]^{2}\frac{r^{n-1}dr}{(1-r^{2})^{2k}}}
.
\end{split}
\end{equation}
\end{theorem}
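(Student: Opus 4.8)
The plan is to prove Theorem~\ref{th1.2} by producing, in each dimension regime, an explicit family of admissible test functions $u_\epsilon\in C^\infty_0(\mathbb{B}^n)$ and computing the asymptotics of
\[
\mathcal Q(u_\epsilon)=\frac{S_{n,k}\,\|u_\epsilon\|_{L^{2^*}(dV)}^{2}-\int_{\mathbb{B}^n}(P_ku_\epsilon)u_\epsilon\,dV}{\int_{\mathbb{B}^n}u_\epsilon^{2}\,dV},\qquad 2^*=\tfrac{2n}{n-2k}.
\]
Since the hypothesis of the theorem forces $\lambda\ge\mathcal Q(u)$ for every such $u$, the limit of $\mathcal Q(u_\epsilon)$ gives the asserted lower bounds. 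The basic reduction is the conformal change $u(x)=\big(\tfrac{1-|x|^2}{2}\big)^{(n-2k)/2}v(x)$ between hyperbolic $\mathbb{B}^n$ and the Euclidean unit ball: conformal covariance of the GJMS operator yields $\int_{\mathbb{B}^n}(P_ku)u\,dV=\int_{\mathbb{B}^n}((-\Delta)^kv)v\,dx$ and $\int_{\mathbb{B}^n}|u|^{2^*}dV=\int_{\mathbb{B}^n}|v|^{2^*}dx$, while $\int_{\mathbb{B}^n}u^{2}\,dV=2^{2k}\int_{\mathbb{B}^n}\frac{v^{2}}{(1-|x|^2)^{2k}}\,dx$. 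Thus $\mathcal Q$ is transported to a quotient for the sharp Sobolev inequality of $(-\Delta)^k$ on the Euclidean ball, with the $L^2$-term weighted by $(1-|x|^2)^{-2k}$.

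For the test profile one takes the (suitably normalized) polyharmonic Dirichlet Green's function of $(-\Delta)^k$ on the unit ball with pole at the origin,
\[
v(x)=|x|^{2k-n}-\sum_{j=0}^{k-1}\frac{\Gamma(j+\frac{n-2k}{2})}{\Gamma(j+1)\Gamma(\frac{n-2k}{2})}\big(1-|x|^2\big)^{j},
\]
whose polynomial part has Euclidean degree $2k-2$ and is therefore annihilated by $(-\Delta)^k$, so that $(-\Delta)^kv=\gamma_{n,k}^{-1}\delta_0$ with $\gamma_{n,k}=\frac{\Gamma((n-2k)/2)}{4^k\pi^{n/2}\Gamma(k)}$ the fundamental-solution constant, and which vanishes to order $k$ at $\partial\mathbb{B}^n$. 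As $v$ is singular at $0$ it is not itself admissible, so one regularizes: glue a rescaled Aubin--Talenti bubble $U_\epsilon(x)=\big(\tfrac{\epsilon}{\epsilon^2+|x|^2}\big)^{(n-2k)/2}$ into the core $|x|\lesssim\epsilon$, match it to $v$ on the bulk, and truncate near $\partial\mathbb{B}^n$ (the last truncation costs only $o(1)$ because $v=O((1-|x|^2)^k)$ there). The asymptotics as $\epsilon\to0$ are then a Brezis--Nirenberg-type expansion. On the one hand
\[
\int_{\mathbb{B}^n}u_\epsilon^{2}\,dV\longrightarrow 2^{2k}\,\omega_{n-1}\int_0^1\Big[r^{2k-n}-\sum_{j=0}^{k-1}\tfrac{\Gamma(j+\frac{n-2k}{2})}{\Gamma(j+1)\Gamma(\frac{n-2k}{2})}(1-r^2)^j\Big]^2\frac{r^{n-1}\,dr}{(1-r^2)^{2k}}=2^{2k}\omega_{n-1}D,
\]
a finite positive number precisely when $2k+2\le n<4k$ (convergence at $r=1$ is automatic from the order-$k$ vanishing; convergence at $r=0$ requires $n<4k$), where $\omega_{n-1}=2\pi^{n/2}/\Gamma(n/2)$. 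On the other hand, the leading $\epsilon^{2k-n}$-divergences of $\int(P_ku_\epsilon)u_\epsilon\,dV$ and of $S_{n,k}\|u_\epsilon\|_{2^*}^2$ cancel — the profile $|x|^{2k-n}$ being the zero-scale limit of the Sobolev extremal and hence realizing $S_{n,k}$ — and the difference tends to a finite renormalized self-energy $-\mathcal E$, with $\mathcal E$ equal to the relevant Brezis--Nirenberg constant times the "mass'' $H(0,0)=\gamma_{n,k}\sum_{j=0}^{k-1}\tfrac{\Gamma(j+\frac{n-2k}{2})}{\Gamma(j+1)\Gamma(\frac{n-2k}{2})}$, i.e.\ $\gamma_{n,k}$ times the regular part of $v$ at its pole. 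Letting $\epsilon\to0$ gives $\lambda\ge -\mathcal E/(2^{2k}\omega_{n-1}D)$, and inserting the explicit values of $\gamma_{n,k}$, of the conformal normalization $2^{(n-2k)/2}$ at the pole, and of the higher-order Brezis--Nirenberg constant reduces this to exactly the bound \eqref{1.1}. Part (1), $n\ge 4k$, follows from the same construction: now $D=+\infty$, so $\int u_\epsilon^{2}\,dV\to\infty$ while the renormalized self-energy stays finite, whence $\lambda\ge -\mathcal E\big/\!\int u_\epsilon^{2}\,dV\to 0$; in the borderline dimension $n=4k$ (logarithmic factors), one may instead concentrate standard bubbles at an interior point of $\mathbb{B}^n$, for which the Sobolev deficit is $O(\epsilon^{n-2k})$ and $\int u_\epsilon^2\,dV\sim c\,\epsilon^{2k}|\log\epsilon|$, again forcing $\lambda\ge0$.

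The main obstacle is the regularized asymptotic analysis of the third paragraph: one must verify that all intermediate divergences in the expansions of $\int(P_ku_\epsilon)u_\epsilon\,dV$ and $S_{n,k}\|u_\epsilon\|^2_{2^*}$ cancel, and then identify the surviving finite constant $\mathcal E$ with the precise Gamma-factor expression in \eqref{1.1}. This is where the explicit pointwise formula for the Green's function (to be established in Theorem~\ref{th1.3}) and the Fourier-analytic machinery on $\mathbb{B}^n$ are used — both to justify the closed form of $v$ and to evaluate the constants — after which the remaining bookkeeping (the normalization $\gamma_{n,k}$, the passage from $dx$ to $dV$, and the polyharmonic Brezis--Nirenberg constant) is routine though delicate. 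A minor additional point is to check $\int_{\mathbb{B}^n}u_\epsilon^{2}\,dV>0$ for all small $\epsilon$, so that $\mathcal Q(u_\epsilon)$ is well defined, and to handle the borderline case $n=4k$ separately as indicated.
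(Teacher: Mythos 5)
Your overall strategy---conformal reduction to the Euclidean ball via the covariance identity
$\big(\tfrac{1-|x|^2}{2}\big)^{k+n/2}(-\Delta)^k\big[\big(\tfrac{1-|x|^2}{2}\big)^{k-n/2}f\big]=P_kf$,
a test function of the form ``bubble minus the polynomial that makes it vanish to order $k$ at the boundary,'' and an asymptotic expansion of the Sobolev quotient as $\varepsilon\to0$---is precisely the one the paper follows, and your choice of the polynomial
$\sum_{j=0}^{k-1}\tfrac{\Gamma(j+\frac{n-2k}{2})}{\Gamma(j+1)\Gamma(\frac{n-2k}{2})}(1-|x|^2)^j$
(the degree-$(k-1)$ Taylor expansion of $|x|^{2k-n}$ at $|x|=1$) is the right one. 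However, the execution you outline has a genuine gap and two avoidable complications.

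The gap is the one you yourself flag: identifying the ``renormalized self-energy $\mathcal E$'' with the explicit Gamma-factor expression is the entire content of part (2), and your plan (a higher-order Brezis--Nirenberg expansion with a glued Aubin--Talenti core and an undetermined ``mass'' normalization) postpones it rather than proving it. The paper never computes a renormalized self-energy at all; it simply expands the ratio $\int(P_kg_\varepsilon)g_\varepsilon\,dV\,/\,\|g_\varepsilon\|_{2^*}^2$ to first order in $\varepsilon^{n/2-k}$ (both numerator and denominator blow up like $\varepsilon^{k-n/2}$), and reads the bound on $\lambda$ from the requirement that the full quotient stay above $S_{n,k}$. The constant then falls out of elementary Beta-integral identities, with no Brezis--Nirenberg machinery.

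The two complications: first, the paper does not glue a bubble into the singular Green's function; it works directly with the globally smooth family $w_\varepsilon(x)=\big(\tfrac{2}{\varepsilon+|x|^2}\big)^{(n-2k)/2}-\text{poly}$. This is the key simplification, because then $(-\Delta)^k w_\varepsilon=\varepsilon^k\,\text{const}\cdot\big(\tfrac{2}{\varepsilon+|x|^2}\big)^{(n+2k)/2}$ exactly (the polynomial being killed by $(-\Delta)^k$), so $\int((-\Delta)^kw_\varepsilon)w_\varepsilon\,dx$ is a single explicit integral with no cutoff errors to control; your gluing would require a cutoff near $|x|\sim\varepsilon$ whose contribution you have not estimated. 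Second, contrary to what you assert, the paper's proof of this theorem does not invoke Theorem \ref{th1.3}, the hyperbolic Green's functions, nor the Fourier analysis of Section 2; the only external ingredient is Liu's conformal covariance identity \eqref{5.5}. The proof is therefore more elementary than your plan suggests. Part (1), $n\ge 4k$, is handled as you describe, via divergence of $\int u_\varepsilon^2\,dV$, using a $\delta$-truncated version of the same family.
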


\begin{remark}
By Theorem \ref{th1.2}, we have shown that
if $n\geq 4k$, then the sharp constant of $2k$-th order Hardy-Sobolev-Maz'ya inequality is  strictly less than $S_{n,k}$.
 In some cases of $2k+2\leq n< 4k$, we can also show that the sharp constant of $2k$-th order Hardy-Sobolev-Maz'ya inequality is  also strictly less than $S_{n,k}$. For instance, if
$n=2k+2$, we can compute the right hand of inequality (\ref{1.1}) is equal to $-\frac{(k-1)(k!)^{2}}{2^{2k}}$,
which is strictly greater than $-\prod\limits^{k}_{j=1}\frac{(2j-1)^{2}}{4}$.  Similar results hold in the rest of the cases $n$, i. e. $2k+2<n<4k$.
We leave it to the  reader.
\end{remark}

\begin{remark}
When $n=2k+1$, i.e., $k=\frac{n-1}{2}$ and $n$ is odd, as shown in Theorem \ref{th1.1}, the sharp constants for the Sobolev inequality and the Hardy-Sobolev-Maz'ya inequality are the same. When $n$ is even, and $k=\frac{n}{2}$, this is the borderline case,
the Hardy-Adams inequalities hold as shown in  \cite{LiLuYang1}, \cite{LiLuYang2},   \cite{LuYang2} (see also \cite{LuYang1}, \cite{wy} for the Hardy-Trudinger-Moser inequality).
\end{remark}

Our next main results in this paper are the estimates of Green's functions of the Paneitz and GJMS operators on hyperbolic spaces
which are of independent interests.
It is well known   that Green's function of the second order operator $P_{1}$
\begin{equation*}
P^{-1}_{1}=\frac{1}{\gamma_{n}(2)}\left[\left(\frac{1}{2\sinh\frac{\rho}{2}}\right)^{n-2}-
\left(\frac{1}{2\cosh\frac{\rho}{2}}\right)^{n-2}\right],
\end{equation*}
where  $\gamma_{n}(\alpha)$ is defined by
\begin{equation}\label{4.13}
\begin{split}
\gamma_{n}(\alpha)=\pi^{n/2}2^{\alpha}\frac{\Gamma(\alpha/2)}{\Gamma(\frac{n-\alpha}{2})},\;\;\;\;0<\alpha<n
\end{split}
\end{equation}
and $\rho=\log \frac{1+|x|}{1-|x|}$ is the hyperbolic distance from $x\in \mathbb{B}^n$ to the origin.

We will  establish the following  precise expressions of the Green functions for the Paneitz operator $P_2$ and higher order GJMS operators $P_k$, which are of independent interest.
\begin{theorem} \label{th1.3}
Let $1<k<n/2$.
The Green's function of $P_{k}$ satisfies:
\begin{itemize}
  \item if $n=2m$, then
  \begin{equation*}
\begin{split}
P^{-1}_{k}(\rho)
=&\frac{4^{k-1}}{\gamma_{2m}(2k)(\sinh\rho)^{2m-2}}\sum^{m-1-k}_{j=0}\frac{\Gamma(m)}{\Gamma(j+1)\Gamma(m-j)}
\left(\sinh\frac{\rho}{2}\right)^{2j+2k-2};
\end{split}
\end{equation*}

  \item if $n=2m-1$, then
    \begin{equation*}
\begin{split}
 P^{-1}_{k}(\rho)= \frac{4^{k-1}\sqrt{2}}{\gamma_{2m}(2k)}\sum^{m-1-k}_{j=0}\frac{\Gamma(m)}{\Gamma(j+1)\Gamma(m-j)}
\int^{\infty}_{\rho}
\left(\sinh\frac{r}{2}\right)^{2j+2k-2}\frac{(\sinh r)^{3-2m}}{\sqrt{\cosh r-\cosh\rho}}dr.
\end{split}
\end{equation*}
\end{itemize}
Furthermore, we have
\begin{equation*}
P^{-1}_{k}(\rho)\leq\frac{1}{\gamma_{n}(2k)}\left[\left(\frac{1}{2\sinh\frac{\rho}{2}}\right)^{n-2k}-
\left(\frac{1}{2\cosh\frac{\rho}{2}}\right)^{n-2k}\right],\;\;\rho>0.
\end{equation*}
\end{theorem}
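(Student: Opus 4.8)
\textbf{Proof proposal for Theorem \ref{th1.3}.}

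The plan is to obtain the Green's function of $P_k$ by exploiting the product factorization $P_k=\prod_{j=1}^{k}\bigl(-\Delta_{\mathbb H}-\tfrac{(n-1)^2}{4}+(j-\tfrac12)^2\bigr)$ together with the Fourier analysis on hyperbolic space. Each factor is a shifted Laplacian whose resolvent kernel is explicitly known; in the ball model, radial functions of the hyperbolic distance $\rho$ are acted on by $-\Delta_{\mathbb H}$ through the ordinary differential operator $-\partial_\rho^2-(n-1)\coth\rho\,\partial_\rho$, and the Helgason-Fourier transform diagonalizes $-\Delta_{\mathbb H}$ with symbol $\lambda^2+\tfrac{(n-1)^2}{4}$. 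Hence on the spectral side $P_k$ has symbol $\prod_{j=1}^{k}\bigl(\lambda^2+(j-\tfrac12)^2\bigr)$, and $P_k^{-1}(\rho)$ is the inverse spherical transform of the reciprocal of this polynomial. I would first do a partial fraction decomposition
\begin{equation*}
\frac{1}{\prod_{j=1}^{k}\bigl(\lambda^2+(j-\frac12)^2\bigr)}=\sum_{j=1}^{k}\frac{c_j}{\lambda^2+(j-\frac12)^2},
\end{equation*}
which reduces the problem to inverting each single factor $-\Delta_{\mathbb H}-\tfrac{(n-1)^2}{4}+(j-\tfrac12)^2$, whose Green's function is classically known in terms of Legendre/associated functions of $\cosh\rho$, or more concretely (as in the stated formula for $P_1$) in terms of powers of $\sinh\frac{\rho}{2}$ and $\cosh\frac{\rho}{2}$ when the dimension parity is favorable.

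The main technical work is to resum the partial-fraction pieces into the closed forms claimed. In the even-dimensional case $n=2m$ I expect each resolvent kernel to be an elementary combination of $(\sinh\frac{\rho}{2})^{-a}$ and $(\cosh\frac{\rho}{2})^{-a}$, and the claimed single sum $\sum_{j=0}^{m-1-k}\frac{\Gamma(m)}{\Gamma(j+1)\Gamma(m-j)}(\sinh\frac{\rho}{2})^{2j+2k-2}$ divided by $(\sinh\rho)^{2m-2}=(2\sinh\frac{\rho}{2}\cosh\frac{\rho}{2})^{2m-2}$ should emerge after using the binomial identity $(\cosh\frac{\rho}{2})^{2}=1+(\sinh\frac{\rho}{2})^{2}$ to expand $(\cosh\frac{\rho}{2})^{2m-2k-2}$ and collecting terms; the coefficients $\frac{\Gamma(m)}{\Gamma(j+1)\Gamma(m-j)}=\binom{m-1}{j}$ are exactly the binomial coefficients one gets. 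For odd $n=2m-1$, there is the standard device (Millson's descent / the dimension-shifting identity) expressing the $(2m-1)$-dimensional radial kernel as a one-dimensional Abel-type integral $\int_\rho^\infty (\cdots)\,\frac{dr}{\sqrt{\cosh r-\cosh\rho}}$ of the $2m$-dimensional one; applying this to the even-dimensional formula just obtained yields the stated integral expression, with the same binomial coefficients $\binom{m-1}{j}$ surviving and the extra $\sqrt 2$ and $(\sinh r)^{3-2m}$ factors coming from the Jacobian of $\cosh r-\cosh\rho$ and the normalization. I would verify the overall constant $\frac{4^{k-1}}{\gamma_{2m}(2k)}$ by checking the leading singularity as $\rho\to 0^+$: there $P_k^{-1}(\rho)$ must behave like the Euclidean Riesz kernel $\gamma_n(2k)^{-1}\rho^{2k-n}$ (only the $j=0$ term contributes to the top-order singularity, giving $\bigl(\frac{\rho}{2}\bigr)^{2k-2}/\rho^{2m-2}\sim \rho^{2k-n}$ up to the right power of $2$), which pins down both $\gamma_{2m}(2k)$ and the factor $4^{k-1}$.

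For the final pointwise upper bound $P_k^{-1}(\rho)\le \gamma_n(2k)^{-1}\bigl[(2\sinh\frac\rho2)^{-(n-2k)}-(2\cosh\frac\rho2)^{-(n-2k)}\bigr]$, the strategy is to recognize the right-hand side as the Green's function of the pure power $(-\Delta_{\mathbb H}-\tfrac{(n-1)^2}{4})$-type operator of order $2k$ — equivalently, in the even case, to show that the finite sum in our closed form is dominated term-by-term after factoring out $(\sinh\rho)^{-(2m-2)}$: one wants
\begin{equation*}
4^{k-1}\sum_{j=0}^{m-1-k}\binom{m-1}{j}\Bigl(\sinh\frac{\rho}{2}\Bigr)^{2j+2k-2}\le \frac{(\cosh\frac{\rho}{2})^{2m-2}-(\sinh\frac{\rho}{2})^{2m-2}}{(2\sinh\frac{\rho}{2})^{2m-2k}\,\gamma_n(2k)/\gamma_{2m}(2k)}\cdot(\text{stuff}),
\end{equation*}
which after clearing denominators becomes a polynomial inequality in $t=(\sinh\frac\rho2)^2\ge 0$ comparing a truncated binomial expansion of $(1+t)^{m-1}$ against the full one minus $t^{m-1}$ — true because the truncated sum drops exactly the high-order tail. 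The cleanest route is probably to prove the operator inequality $P_k\ge \bigl(-\Delta_{\mathbb H}-\tfrac{(n-1)^2}{4}\bigr)^{?}$ is false in general, so instead I would compare symbols: since $\prod_{j=1}^k(\lambda^2+(j-\frac12)^2)\ge (\lambda^2+\frac14)^k$ for all real $\lambda$ when $k\ge 1$ is obvious only for the $j=1$ factor, the honest comparison is with the operator whose symbol is $\prod_{j=1}^k(\lambda^2+(j-\frac12)^2)$ versus the one of order $2k$ with a single spectral shift; I therefore expect the genuine argument for the bound to go through the explicit formulas and the elementary binomial tail estimate rather than through an abstract monotonicity principle. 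The hardest part will be bookkeeping: tracking the partial-fraction constants $c_j$, the Gamma-function normalizations of each resolvent, and checking that after resummation they collapse to the single clean binomial sum — and, in the odd case, that the Abel transform commutes with the finite summation and preserves the inequality (which it does, being an integral of a nonnegative kernel against a dominated integrand).
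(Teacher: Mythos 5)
Your outline matches the paper's proof almost exactly: the paper likewise factors $P_k$ into shifted Laplacians, evaluates each resolvent kernel via the Legendre-function formula (Lemma \ref{lm3.2b} and (\ref{b2.9})), combines them by an iterated two-factor partial-fraction (telescoping) identity in Lemmas \ref{lm6.1}--\ref{lm6.2} to get the even-dimensional closed form, passes to odd $n$ by the heat-kernel descent integral (\ref{6.13})--(\ref{6.16}), and proves the pointwise bound by precisely the binomial-coefficient comparison you describe in (\ref{6.12}), then pushes it through the Abel integral for odd $n$. The only cosmetic differences are that the paper does the partial-fraction step inductively one factor at a time rather than as a flat $k$-term decomposition, and reads off the normalization from the Legendre-function formula directly rather than from the $\rho\to 0^{+}$ asymptotics.
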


The organization of this paper is as follows. In Section 2, we will review some Fourier analysis on hyperbolic spaces that will be needed in the subsequent sections. Section 3 gives a precise expression of the Green function of the operator  $(-\frac{(n-1)^{2}}{4}-\Delta_{\mathbb{H}})^{-1}$ on hyperbolic spaces using the Legendre function of second type $Q^{\mu}_{\nu}(z)$.
In Section 4, when $n$ is odd and $n=2m+1$ or when and $0\leq k\leq m-1$,  we   give precise  expressions for the Green functions of the operators
$(k^{2}-(n-1)^{2}/4-\Delta_{\mathbb{H}})^{-1}$   and
$\left[\prod^{l-1}_{j=0}((k+j)^{2}-(n-1)^{2}/4-\Delta_{\mathbb{H}})\right]^{-1}$ for $l\in \{1, 2, \cdots, m\}$ and $k\in \{0, 1, \cdots, m-l\}$. The proof of Theorem \ref{th1.1} is then given using these Green's functions and Fourier analysis on hyperbolic spaces.
Section 5 gives the proof of Theorem \ref{th1.2}. In Section 6, we provide the precise expressions of Green functions of  the Paneitz and GJMS operators $P_k$ on hyperbolic spaces and offer an optimal pointwise bound of the Green functions. The proof of Theorem \ref{th1.3} is given in this section.

\section{Notations and preliminaries}
We begin by quoting some preliminary facts on Fourier analysis on hyperbolic spaces which will be needed in
the sequel and  refer to \cite{ah,an,anj, CS, DR, he,he2,hu, Ionescu, Ionescu1, liup, Stein1, ter,Wolf} for more information about this subject.

\subsection{The half space model $\mathbb{H}^{n}$}

It is given by $\mathbb{R}_{+}\times\mathbb{R}^{n-1}=\{(x_{1},\cdots,x_{n}): x_{1}>0\}$ equipped with the Riemannian metric
$ds^{2}=\frac{dx_{1}^{2}+\cdots+dx_{n}^{2}}{x^{2}_{1}}$. The induced Riemannian measure can be written as $dV=\frac{dx}{x^{n}_{1}}$, where $dx$ is the Lebesgue measure on
$\mathbb{R}^{n}$.
The hyperbolic gradient is $\nabla_{\mathbb{H}}=x_{1}\nabla$ and the
Laplace-Beltrami operator on $\mathbb{H}^{n}$ is given by
\begin{equation}\label{2.1}
\Delta_{\mathbb{H}}=x^{2}_{1}\Delta-(n-2)x_{1}\frac{\partial}{\partial x_{1}},
\end{equation}
where $\Delta=\sum^{n}_{i=1}\frac{\partial^{2}}{\partial
x^{2}_{i}}$ is the Laplace operator on $\mathbb{R}^{n}$.

\subsection{The ball model $\mathbb{B}^{n}$}
Recall that
it is given by the unit ball
\[\mathbb{B}^{n}=\{x=(x_{1},\cdots,x_{n})\in \mathbb{R}^{n}| |x|<1\}\]
equipped with the usual Poincar\'e metric
\[
ds^{2}=\frac{4(dx^{2}_{1}+\cdots+dx^{2}_{n})}{(1-|x|^{2})^{2}}.
\]
The hyperbolic gradient is $\nabla_{\mathbb{H}}=\frac{1-|x|^{2}}{2}\nabla$ and
the
Laplace-Beltrami operator is given by
\[
\Delta_{\mathbb{H}}=\frac{1-|x|^{2}}{4}\left\{(1-|x|^{2})\sum^{n}_{i=1}\frac{\partial^{2}}{\partial
x^{2}_{i}}+2(n-2)\sum^{n}_{i=1}x_{i}\frac{\partial}{\partial
x_{i}}\right\}.
\]
Furthermore,  the half space model $\mathbb{H}^{n}$ and the ball model $\mathbb{B}^{n}$ are  equivalent.

\subsection{M\"obius
transformations}
For each $a\in\mathbb{B}^{n}$, we define the  M\"obius
transformations $T_{a}$ by (see e.g. \cite{ah,hu})
\[
T_{a}(x)=\frac{|x-a|^{2}a-(1-|a|^{2})(x-a)}{1-2x\cdot
a+|x|^{2}|a|^{2}},
\]
where $x\cdot a=x_{1}a_{1}+x_{2}a_{2}+\cdots +x_{n}a_{n}$ denotes
the  scalar product in $\mathbb{R}^{n}$. It is known that the measure on $\mathbb{B}^{n}$ is invariant with respect to the M\"obius
transformations.

Using the M\"obius transformations, we can define the
distance  from $x$ to $y$ in $\mathbb{B}^{n}$ as follows
\begin{equation*}
\rho(x,y)=\rho(T_{x}(y))=\rho(T_{y}(x))=\log\frac{1+|T_{y}(x)|}{1-|T_{y}(x)|}.
\end{equation*}
Also using the M\"obius transformations,  we can define the convolution of
measurable functions $f$ and $g$ on $\mathbb{B}^{n}$ by
\begin{equation}\label{2.3}
(f\ast g)(x)=\int_{\mathbb{B}^{n}}f(y)g(T_{x}(y))dV(y)
\end{equation}
provided this integral exists. It is easy to check that
\[
f\ast g= g\ast f.
\]
Furthermore, if $g$ is radial, i.e. $g=g(\rho)$, then
\begin{equation}\label{2.4}
  (f\ast g)\ast h= f\ast (g\ast h)
\end{equation}
provided $f,g,h\in L^{1}(\mathbb{B}^{n})$

\subsection{The Fourier transform on hyperbolic spaces}
In term of the ball model, we define the Fourier transform on hyperbolic spaces as follows. Set
\[
e_{\lambda,\zeta}(x)=\left(\frac{\sqrt{1-|x|^{2}}}{|x-\zeta|}\right)^{n-1+i\lambda}, \;\; x\in \mathbb{B}^{n},\;\;\lambda\in\mathbb{R},\;\;\zeta\in\mathbb{S}^{n-1}.
\]
The Fourier transform of a function  $f$  on $\mathbb{B}^{n}$ can be defined as
\[
\widehat{f}(\lambda,\zeta)=\int_{\mathbb{B}^{n}} f(x)e_{-\lambda,\zeta}(x)dV
\]
provided this integral exists. If $g\in C^{\infty}_{0}(\mathbb{B}^{n})$ is radial, then
 $$\widehat{(f\ast g)}=\widehat{f}\cdot\widehat{g}.$$
Moreover,
the following inversion formula holds for $f\in C^{\infty}_{0}(\mathbb{B}^{n})$:
\[
f(x)=D_{n}\int^{+\infty}_{-\infty}\int_{\mathbb{S}^{n-1}} \widehat{f}(\lambda,\zeta)e_{\lambda,\zeta}(x)|\mathfrak{c}(\lambda)|^{-2}d\lambda d\sigma(\varsigma),
\]
where $D_{n}=\frac{1}{2^{3-n}\pi |\mathbb{S}^{n-1}|}$ and $\mathfrak{c}(\lambda)$ is the  Harish-Chandra $\mathfrak{c}$-function given by
\[
\mathfrak{c}(\lambda)=\frac{2^{n-1-i\lambda}\Gamma(n/2)\Gamma(i\lambda)}{\Gamma(\frac{n-1+i\lambda}{2})\Gamma(\frac{1+i\lambda}{2})}.
\]
Similarly, there holds the Plancherel formula:
\begin{equation}\label{2.5}
\int_{\mathbb{B}^{n}}|f(x)|^{2}dV=D_{n}\int^{+\infty}_{-\infty}\int_{\mathbb{S}^{n-1}}|\widehat{f}(\lambda,\zeta)|^{2}|\mathfrak{c}(\lambda)|^{-2}d\lambda d\sigma(\varsigma).
\end{equation}

Since $e_{\lambda,\zeta}(x)$ is an eigenfunction of $\Delta_{\mathbb{H}}$ with eigenvalue $-\frac{(n-1)^{2}+\lambda^{2}}{4}$, it is easy to check that, for
$f\in C^{\infty}_{0}(\mathbb{B}^{n})$,
\[
\widehat{\Delta_{\mathbb{H}}f}(\lambda,\zeta)=-\frac{(n-1)^{2}+\lambda^{2}}{4}\widehat{f}(\lambda,\zeta).
\]
Therefore, in analogy with the Euclidean setting, we define the fractional
Laplacian on hyperbolic space as follows:
\begin{equation}\label{2.6}
\widehat{(-\Delta_{\mathbb{H}})^{\gamma}f}(\lambda,\zeta)=\left(\frac{(n-1)^{2}+\lambda^{2}}{4}\right)^{\gamma}\widehat{f}(\lambda,\zeta),\;\;\gamma\in \mathbb{R}.
\end{equation}

\section{Precise expression and estimate for Green's function of $(-\frac{(n-1)^{2}}{4}-\Delta_{\mathbb{H}})^{-1}$}

 In this section, we give a precise  expression and estimate of
 the kernel of Green's function of the operator $(-(n-1)^{2}/4-\Delta_{\mathbb{H}})^{-1}$, where $(n-1)^{2}/4$ is the spectral gap of the Laplacian $-\Delta_{\mathbb{H}}$ on the hyperbolic ball $\mathbb{B}^n$.
  We will need some facts about the Legendre function of second type $Q^{\mu}_{\nu}(z)$, which is defined by (see \cite{er})
\begin{equation}\label{b2.1}
\begin{split}
Q^{\mu}_{\nu}(z)
=&e^{i(\pi\mu)}2^{-\nu-1}\frac{\Gamma(\nu+\mu+1)}{\Gamma(\nu+1)}
(z^{2}-1)^{-\mu/2}\int^{\pi}_{0}(z+\cos t)^{\mu-\nu-1}
(\sin t)^{2\nu+1}dt,\\
& Re\nu>-1,\;\;Re(\nu+\mu+1)>0.
\end{split}
\end{equation}
It is known that (see \cite{er})
\begin{equation}\label{b2.2}
\begin{split}
Q^{\mu}_{\nu}(\cosh\rho)
=&e^{i(\pi\mu)}\frac{\sqrt{\pi}}{\sqrt{2}}\frac{\sinh^{\mu}\rho}{
\Gamma(1/2-\mu)}\int^{\infty}_{\rho}e^{-(\nu+\frac{1}{2})r}(\cosh r-\cosh\rho)^{-\mu-1/2}dr,\\
&\rho>0,\;\; Re(\nu+\mu+1)>0,\;\;Re\mu<1/2.
\end{split}
\end{equation}
and (see \cite{gr})
\begin{equation}\label{b2.3}
\begin{split}
\int^{\infty}_{u}(x^{2}-1)^{\frac{1}{2}\lambda}(x-u)^{\mu-1}Q^{-\lambda}_{\nu}(x)dx=&\Gamma(\mu)e^{\mu\pi i}(u^{2}-1)^{\frac{1}{2}\lambda+\frac{1}{2}\mu}
Q^{-\lambda-\mu}_{\nu}(u),\\
&|\arg(u-1)|<\pi,\;\;0<Re\mu<1+Re(\nu-\lambda).
\end{split}
\end{equation}
Setting $z=\cosh\rho$ and using (\ref{b2.1}), we have
\begin{equation}\label{b2.5}
\begin{split}
Q^{\mu}_{\nu}(\cosh\rho)
=e^{i(\pi\mu)}2^{-\nu-1}\frac{\Gamma(\nu+\mu+1)}{\Gamma(\nu+1)}
\sinh^{-\mu}\rho\int^{\pi}_{0}(\cosh\rho+\cos t)^{\mu-\nu-1}
(\sin t)^{2\nu+1}dt.
\end{split}
\end{equation}
Setting $u=\cosh\rho$ and using (\ref{b2.3}), we have
\begin{equation}\label{bb2.3}
\begin{split}
\int^{\infty}_{\rho}\frac{(\sinh r)^{\lambda+1}}{(\cosh r-\cosh\rho)^{1-\mu}}Q^{-\lambda}_{\nu}(\cosh r)dr=&\Gamma(\mu)e^{\mu\pi i}(\sinh\rho)^{\lambda+\mu}
Q^{-\lambda-\mu}_{\nu}(\cosh\rho).
\end{split}
\end{equation}

Let $n\geq2$. From now on, we will denote by $e^{t\Delta_{\mathbb{H}}}$ the heat kernel on $\mathbb{B}^{n}$. It is well known that $e^{t\Delta_{\mathbb{H}}}$ depends only on $t$ and $\rho(x,y)$. In fact, $e^{t\Delta_{\mathbb{H}}}$  is given explicitly by the following formulas (see e.g. \cite{d,gn}):

\begin{itemize}
  \item If $n=2m$, then
  \begin{equation}\label{3.1}
\begin{split}
e^{t\Delta_{\mathbb{H}}}=&(2\pi)^{-\frac{n+1}{2}}t^{-\frac{1}{2}}e^{-\frac{(n-1)^{2}}{4}t}
\int^{+\infty}_{\rho}\frac{\sinh r}{\sqrt{\cosh r-\cosh\rho}}\left(-\frac{1}{\sinh r}\frac{\partial}{\partial r}\right)^{m}e^{-\frac{r^{2}}{4t}}dr\\
=&\frac{1}{2(2\pi)^{\frac{n+1}{2}}}t^{-\frac{3}{2}}\int^{+\infty}_{\rho}\frac{\sinh r}{\sqrt{\cosh r-\cosh\rho}}\left(-\frac{1}{\sinh r}\frac{\partial}{\partial r}\right)^{m-1}\left(\frac{r}{\sinh r} e^{-\frac{r^{2}}{4t}}\right)dr;
\end{split}
\end{equation}
  \item If $n=2m+1$, then
   \begin{equation}\label{3.2}
  \begin{split}
e^{t\Delta_{\mathbb{H}}}=&2^{-m-1}\pi^{-m-1/2}t^{-\frac{1}{2}}e^{-\frac{(n-1)^{2}}{4}t}\left(-\frac{1}{\sinh \rho}\frac{\partial}{\partial \rho}\right)^{m}e^{-\frac{\rho^{2}}{4t}}\\
=&2^{-m-2}\pi^{-m-1/2}t^{-\frac{3}{2}}e^{-\frac{(n-1)^{2}}{4}t}\left(-\frac{1}{\sinh \rho}\frac{\partial}{\partial \rho}\right)^{m-1}\left(\frac{\rho}{\sinh \rho} e^{-\frac{\rho^{2}}{4t}}\right).
\end{split}
\end{equation}

\end{itemize}

  An explicit expression of Green's function $(\lambda-\Delta_{\mathbb{H}})^{-1}$ with $\lambda>-\frac{(n-1)^{2}}{4}$ is given by (see \cite{mat} for $\lambda\geq0$ and \cite{li} for $\lambda>-\frac{(n-1)^{2}}{4}$)
\begin{equation}\label{3.3}
\begin{split}
(\lambda-\Delta_{\mathbb{H}})^{-1}
=&(2\pi)^{-\frac{n}{2}}(\sinh\rho)^{-\frac{n-2}{2}}e^{-\frac{(n-2)\pi}{2}i}Q^{\frac{n-2}{2}}_{\theta_{n}(\lambda)}(\cosh\rho), \;\; n\geq3.
\end{split}
\end{equation}
where
\begin{equation}\label{3.4}
\begin{split}
\theta_{n}(\lambda)=\sqrt{\lambda+\frac{(n-1)^{2}}{4}}-\frac{1}{2}.
\end{split}
\end{equation}
We shall show that (\ref{3.3}) is also valid for $\lambda=-\frac{(n-1)^{2}}{4}$. Firstly, we prove the following Lemma:
\begin{lemma}\label{lm3.1} Let $m$ be a positive integer. Then we have
\begin{equation}\label{b2.7}
\begin{split}
\left(-\frac{1}{\sinh \rho}\frac{\partial}{\partial \rho}\right)^{m-1}\frac{1}{\sinh \rho}
=&\frac{\Gamma(m)}{\pi}\cdot\frac{1}{(\sinh\rho)^{2m-1}}\int^{\pi}_{0}(\cosh\rho+\cos t)^{m-1}
dt,\;\;\rho>0.
\end{split}
\end{equation}
\end{lemma}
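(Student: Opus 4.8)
The plan is to prove the identity by induction on $m$, the engine being a reduction-of-order recurrence for the auxiliary integrals $I_k(\rho):=\int_0^\pi(\cosh\rho+\cos t)^k\,dt$. For the base case $m=1$, both sides reduce to $(\sinh\rho)^{-1}$, since $\Gamma(1)=1$ and $I_0(\rho)=\pi$. The key preliminary observation is that, for every integer $k\geq 1$,
\[
(k+1)\,I_{k+1}(\rho)=(2k+1)\cosh\rho\,I_k(\rho)-k\,(\sinh\rho)^2\,I_{k-1}(\rho),
\]
which one gets by integrating over $t\in[0,\pi]$ the elementary identity
\[
\frac{d}{dt}\bigl[\sin t\,(\cosh\rho+\cos t)^k\bigr]=(k+1)(\cosh\rho+\cos t)^{k+1}-(2k+1)\cosh\rho\,(\cosh\rho+\cos t)^k+k(\sinh\rho)^2(\cosh\rho+\cos t)^{k-1};
\]
this last identity follows by writing $\cos t=(\cosh\rho+\cos t)-\cosh\rho$, $\sin^2 t=1-\cos^2 t$, and $\cosh^2\rho-1=\sinh^2\rho$, and the boundary term of the integration vanishes because $\sin 0=\sin\pi=0$.

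For the inductive step, assuming the claimed formula for a given $m$, I would apply the operator $-\frac{1}{\sinh\rho}\frac{\partial}{\partial\rho}$ to both sides. Using $\frac{d}{d\rho}I_{m-1}(\rho)=(m-1)\sinh\rho\,I_{m-2}(\rho)$ (differentiation under the integral sign) together with the quotient rule, the right-hand side collapses to
\[
\frac{\Gamma(m)}{\pi}\cdot\frac{(2m-1)\cosh\rho\,I_{m-1}(\rho)-(m-1)(\sinh\rho)^2 I_{m-2}(\rho)}{(\sinh\rho)^{2m+1}},
\]
and by the recurrence above with $k=m-1$ the numerator equals $m\,I_m(\rho)$. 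Since $m\,\Gamma(m)=\Gamma(m+1)$, this is exactly the asserted formula with $m$ replaced by $m+1$, which closes the induction.

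I do not expect a genuine obstacle beyond careful bookkeeping: the only point requiring attention is pinning down the coefficients $(k+1)$, $-(2k+1)$, $k$ in the total-derivative identity, after which the manipulations are routine. As an alternative route, one may substitute $z=\cosh\rho$, so that $-\frac{1}{\sinh\rho}\frac{\partial}{\partial\rho}=-\frac{\partial}{\partial z}$ and $\frac{1}{\sinh\rho}=(z^2-1)^{-1/2}$; starting from the classical evaluation $(z^2-1)^{-1/2}=\frac1\pi\int_0^\pi(z+\cos t)^{-1}\,dt$ and differentiating $m-1$ times under the integral sign gives $\left(-\frac{1}{\sinh\rho}\frac{\partial}{\partial\rho}\right)^{m-1}\frac{1}{\sinh\rho}=\frac{\Gamma(m)}{\pi}\int_0^\pi(z+\cos t)^{-m}\,dt$, and the change of variables determined by $z+\cos s=(z^2-1)/(z+\cos t)$ (under which $ds=-\sqrt{z^2-1}\,(z+\cos t)^{-1}\,dt$) transforms this into $\frac{\Gamma(m)}{\pi}(\sinh\rho)^{1-2m}\int_0^\pi(\cosh\rho+\cos t)^{m-1}\,dt$. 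I would present the induction as the primary argument since it is entirely self-contained.
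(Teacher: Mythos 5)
Your proof is correct and is essentially the same as the paper's: both run an induction on $m$, and the engine is the same three-term recurrence for $I_k(\rho)=\int_0^\pi(\cosh\rho+\cos t)^k\,dt$ (the paper writes it with $k$ shifted by one). Your verification via the total derivative $\frac{d}{dt}\bigl[\sin t\,(\cosh\rho+\cos t)^k\bigr]$ is exactly the integration by parts the paper performs, just packaged slightly more cleanly. One very small bookkeeping point: since you start the base case at $m=1$, the first inductive step uses the recurrence with $k=0$, which you stated only for $k\geq1$; either observe that the $k=0$ case holds trivially (the $I_{-1}$ term has coefficient $0$), or verify $m=2$ directly as the paper does.
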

\begin{proof}
We shall prove it by induction. It is easy to check (\ref{b2.7}) is valid for
$m=1$ and $m=2$.
Assume (\ref{b2.7}) is valid for $m=k\geq2$. Then for $m=k+1$, we have
\begin{equation*}
\begin{split}
\left(-\frac{1}{\sinh \rho}\frac{\partial}{\partial \rho}\right)^{k}\frac{1}{\sinh \rho}
=&\frac{\Gamma(k)}{\pi}\left(-\frac{1}{\sinh \rho}\frac{\partial}{\partial \rho}\right)\frac{1}{(\sinh\rho)^{2k-1}}\int^{\pi}_{0}(\cosh\rho+\cos t)^{k-1}
dt\\
=&\frac{\Gamma(k)}{\pi}\frac{1}{(\sinh\rho)^{2k+1}}\left[(2k-1)\cosh\rho\int^{\pi}_{0}(\cosh\rho+\cos t)^{k-1}
dt-\right.\\
&\left.(k-1)\sinh^{2}\rho\int^{\pi}_{0}(\cosh\rho+\cos t)^{k-2}
dt\right].
\end{split}
\end{equation*}
To finish the proof, it is enough to show
\begin{equation*}
\begin{split}
&(2k-1)\cosh\rho\int^{\pi}_{0}(\cosh\rho+\cos t)^{k-1}
dt-(k-1)\sinh^{2}\rho\int^{\pi}_{0}(\cosh\rho+\cos t)^{k-2}
dt\\
=&k\int^{\pi}_{0}(\cosh\rho+\cos t)^{k}
dt.
\end{split}
\end{equation*}
In fact,
\begin{equation*}
\begin{split}
&(2k-1)\cosh\rho\int^{\pi}_{0}(\cosh\rho+\cos t)^{k-1}
dt-(k-1)\sinh^{2}\rho\int^{\pi}_{0}(\cosh\rho+\cos t)^{k-2}
dt\\
&-k\int^{\pi}_{0}(\cosh\rho+\cos t)^{k}
dt\\
=&\int^{\pi}_{0}(\cosh\rho+\cos t)^{k-2}\left[(2k-1)\cosh\rho(\cosh\rho+\cos t)-(k-1)\sinh^{2}\rho-k(\cosh\rho+\cos t)^{2}\right]
dt\\
=&\int^{\pi}_{0}(\cosh\rho+\cos t)^{k-2}\left[(k-1)\sin^{2}t-\cos t(\cosh\rho+\cos t)\right]dt\\
=&(k-1)\int^{\pi}_{0}(\cosh\rho+\cos t)^{k-2}\sin^{2}tdt-\int^{\pi}_{0}(\cosh\rho+\cos t)^{k-1}\cos tdt
dt=0.
\end{split}
\end{equation*}
This completes the proof of Lemma \ref{lm3.1}.
\end{proof}

Now we  show that (\ref{3.3}) is  valid for $\lambda=-\frac{(n-1)^{2}}{4}$.
\begin{lemma}\label{lm3.2b}  There holds, for $n\geq3$ and $\rho>0$,
\begin{equation}\label{b2.8}
\begin{split}
(-(n-1)^{2}/4-\Delta_{\mathbb{H}})^{-1}
=&(2\pi)^{-\frac{n}{2}}(\sinh\rho)^{-\frac{n-2}{2}}e^{-\frac{(n-2)\pi}{2}i}Q^{\frac{n-2}{2}}_{-1/2}(\cosh\rho).
\end{split}
\end{equation}
\end{lemma}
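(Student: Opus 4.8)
The plan is to obtain the endpoint formula directly from the heat kernel, using the just-proved Lemma \ref{lm3.1} together with the Legendre representations (\ref{b2.5}) and (\ref{bb2.3}); I would first record the subordination identity
\[
(-(n-1)^2/4-\Delta_{\mathbb H})^{-1}(\rho)=\int_0^{\infty}e^{\frac{(n-1)^2}{4}t}\,e^{t\Delta_{\mathbb H}}(\rho)\,dt ,\qquad \rho>0,
\]
observing that the integral converges absolutely for each fixed $\rho>0$: the factor $e^{-\frac{(n-1)^2}{4}t}$ present in the heat kernel formulas (\ref{3.1})–(\ref{3.2}) cancels the weight, and the surviving integrand behaves like $t^{-3/2}$ times a Gaussian in $\rho/\sqrt t$, so it decays rapidly as $t\to0^+$ (from $e^{-\rho^2/4t}$) and polynomially as $t\to\infty$ (from the $t^{-3/2}$ and the factor $\left(-\frac1{\sinh r}\partial_r\right)^{m-1}$). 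The identification of this Laplace transform with the genuine Green's function at the bottom $\lambda=-(n-1)^2/4$ of the continuous spectrum follows by monotone convergence, letting $\lambda\downarrow-(n-1)^2/4$ in $(\lambda-\Delta_{\mathbb H})^{-1}(\rho)=\int_0^\infty e^{-\lambda t}e^{t\Delta_{\mathbb H}}(\rho)\,dt$ and using positivity of the heat kernel.

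Next I would insert the explicit heat kernel. In the odd case $n=2m+1$, formula (\ref{3.2}) gives, after cancelling $e^{\pm\frac{(n-1)^2}{4}t}$ and interchanging $\int_0^\infty dt$ with $\left(-\frac1{\sinh\rho}\partial_\rho\right)^{m-1}$,
\[
(-(n-1)^2/4-\Delta_{\mathbb H})^{-1}=2^{-m-2}\pi^{-m-1/2}\left(-\frac1{\sinh\rho}\frac{\partial}{\partial\rho}\right)^{m-1}\!\left(\frac{\rho}{\sinh\rho}\int_0^\infty t^{-3/2}e^{-\rho^2/4t}\,dt\right).
\]
The elementary Gamma integral $\int_0^\infty t^{-3/2}e^{-\rho^2/4t}\,dt=2\sqrt\pi/\rho$ collapses the right-hand side to $2^{-m-1}\pi^{-m}\left(-\frac1{\sinh\rho}\partial_\rho\right)^{m-1}\frac1{\sinh\rho}$; Lemma \ref{lm3.1} rewrites this as $2^{-m-1}\pi^{-m-1}\Gamma(m)(\sinh\rho)^{1-2m}\int_0^\pi(\cosh\rho+\cos t)^{m-1}\,dt$, and (\ref{b2.5}) with $\nu=-1/2$, $\mu=(n-2)/2=m-1/2$ identifies $\int_0^\pi(\cosh\rho+\cos t)^{m-1}dt$ with an explicit constant multiple of $e^{-i\pi(m-1/2)}(\sinh\rho)^{m-1/2}Q^{m-1/2}_{-1/2}(\cosh\rho)$; collecting the constants and the phase $e^{i\pi\mu}$ yields exactly (\ref{b2.8}). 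In the even case $n=2m$ the same reductions applied to (\ref{3.1}) leave an outer integral $\int_\rho^\infty\frac{\sinh r}{\sqrt{\cosh r-\cosh\rho}}\left(-\frac1{\sinh r}\partial_r\right)^{m-1}\frac1{\sinh r}\,dr$; expressing the integrand via Lemma \ref{lm3.1} and (\ref{b2.5}) as a multiple of $(\sinh r)^{3/2-m}(\cosh r-\cosh\rho)^{-1/2}Q^{m-1/2}_{-1/2}(\cosh r)$ and invoking the integral identity (\ref{bb2.3}) with $\lambda=1/2-m$, $\mu=1/2$, $\nu=-1/2$ turns it into $\sqrt\pi\,e^{i\pi/2}(\sinh\rho)^{1-m}Q^{m-1}_{-1/2}(\cosh\rho)$; once more collecting constants and phases gives (\ref{b2.8}).

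The step I expect to be the main obstacle is the justification of the interchange of the $t$-integral with the $r$-integral and, especially, with the differential operator $\left(-\frac1{\sinh r}\partial_r\right)^{m-1}$: this requires $t$-uniform bounds, locally integrable in $r$ near $r=\rho$, on the integrand and all of its $r$-derivatives, which means tracking the explicit polynomial structure produced by $\left(-\frac1{\sinh r}\partial_r\right)^{m-1}\!\left(\frac r{\sinh r}e^{-r^2/4t}\right)$ in the variables $1/t$, $r$, and $\cosh r-\cosh\rho$ and then checking against the square-root singularity $(\cosh r-\cosh\rho)^{-1/2}$.

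An alternative that bypasses all these interchanges is to take the limit $\lambda\downarrow-(n-1)^2/4$ directly in (\ref{3.3}): the left-hand side increases by monotone convergence to $\int_0^\infty e^{\frac{(n-1)^2}{4}t}e^{t\Delta_{\mathbb H}}\,dt$, while on the right-hand side $\theta_n(\lambda)\to-1/2$ by (\ref{3.4}) and $Q^{(n-2)/2}_{\nu}(\cosh\rho)$ is continuous in $\nu$ near $\nu=-1/2$, as is clear from (\ref{b2.5}) whose hypotheses $\mathrm{Re}\,\nu>-1$ and $\mathrm{Re}(\nu+\mu+1)>0$ hold throughout. Either route produces (\ref{b2.8}); I would present the direct heat-kernel computation, since it makes the closed form (\ref{b2.8}) manifest and is precisely what Lemma \ref{lm3.1} was set up for.
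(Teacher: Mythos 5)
Your proposal follows essentially the same route as the paper: express the Green's function as the Laplace transform $\int_0^\infty e^{\frac{(n-1)^2}{4}t}e^{t\Delta_{\mathbb H}}\,dt$, substitute the explicit heat kernel formulas (\ref{3.2}) for odd $n$ and (\ref{3.1}) for even $n$, evaluate the Gamma integral $\int_0^\infty t^{-3/2}e^{-\rho^2/4t}\,dt=2\sqrt\pi/\rho$, apply Lemma~\ref{lm3.1}, and then identify the result with the Legendre function via (\ref{b2.5}) in the odd case and additionally (\ref{bb2.3}) in the even case. This is precisely the paper's computation; your supplementary continuity observation (letting $\lambda\downarrow-(n-1)^2/4$ in (\ref{3.3}) and using that $Q^{(n-2)/2}_\nu$ depends continuously on $\nu$ near $-1/2$) is a valid shortcut that the paper does not spell out, but the direct computation you choose to present is what the paper actually does.
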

\begin{proof}
If $n=2m+1$, then by (\ref{3.2}), (\ref{b2.7}) and (\ref{b2.5}), we have
\begin{equation*}
\begin{split}
\left(-(n-1)^{2}/4-\Delta_{\mathbb{H}}\right)^{-1}=&\int^{\infty}_{0}e^{t(\Delta_{\mathbb{H}}+\frac{(n-1)^{2}}{4})}dt\\
=&2^{-m-2}\pi^{-m-1/2}\left(-\frac{1}{\sinh \rho}\frac{\partial}{\partial \rho}\right)^{m-1}\left(\frac{\rho}{\sinh \rho} \int^{\infty}_{0}t^{-\frac{3}{2}}e^{-\frac{\rho^{2}}{4t}}dt\right)\\
=&2^{-m-1}\pi^{-m}\left(-\frac{1}{\sinh \rho}\frac{\partial}{\partial \rho}\right)^{m-1}\frac{1}{\sinh \rho}\\
=&\frac{\Gamma(m)}{(2\pi)^{m+1}}\cdot\frac{1}{(\sinh\rho)^{2m-1}}\int^{\pi}_{0}(\cosh\rho+\cos t)^{m-1}
dt\\
=&\frac{\Gamma(\frac{n-1}{2})}{(2\pi)^{\frac{n+1}{2}}}\cdot\frac{1}{(\sinh\rho)^{n-2}}\int^{\pi}_{0}(\cosh\rho+\cos t)^{\frac{n-3}{2}}
dt\\
=&(2\pi)^{-\frac{n}{2}}(\sinh\rho)^{-\frac{n-2}{2}}e^{-\frac{(n-2)\pi}{2}i}Q^{(n-2)/2}_{-1/2}(\cosh\rho).
\end{split}
\end{equation*}

If $n=2m$,  then by (\ref{3.1}), (\ref{b2.7}), (\ref{b2.5}) and (\ref{bb2.3}), we have
\begin{equation*}
\begin{split}
&(-\Delta_{\mathbb{H}}-(n-1)^{2}/4)^{-1}=\int^{\infty}_{0}e^{t(\Delta_{\mathbb{H}}+\frac{(n-1)^{2}}{4})}dt\\
=&\frac{1}{2(2\pi)^{\frac{n+1}{2}}}\int^{+\infty}_{\rho}\frac{\sinh r}{\sqrt{\cosh r-\cosh\rho}}\left(-\frac{1}{\sinh r}\frac{\partial}{\partial r}\right)^{m-1}\left(\frac{r}{\sinh r} \int^{\infty}_{0}t^{-\frac{3}{2}}e^{-\frac{r^{2}}{4t}}dt\right)dr\\
=&\frac{\sqrt{\pi}}{(2\pi)^{\frac{n+1}{2}}}\int^{+\infty}_{\rho}\frac{\sinh r}{\sqrt{\cosh r-\cosh\rho}}\left(-\frac{1}{\sinh r}\frac{\partial}{\partial r}\right)^{m-1}\frac{1}{\sinh r} dr\\
=&\frac{\sqrt{\pi}}{(2\pi)^{\frac{n+1}{2}}}\int^{+\infty}_{\rho}\frac{\sinh r}{\sqrt{\cosh r-\cosh\rho}}\left[\frac{\Gamma(m)}{\pi}\cdot\frac{1}{(\sinh r)^{2m-1}}\int^{\pi}_{0}(\cosh r+\cos t)^{m-1}
dt\right] dr\\
=&\frac{\sqrt{\pi}}{(2\pi)^{\frac{n+1}{2}}}\int^{+\infty}_{\rho}\frac{(\sinh r)^{-\frac{2m-3}{2}}}{\sqrt{\cosh r-\cosh\rho}}\sqrt{\frac{2}{\pi}}e^{-\frac{(2m-1)\pi}{2}i}Q^{(2m-1)/2}_{-1/2}(\cosh r) dr\\
=&\frac{\sqrt{2}}{(2\pi)^{\frac{n+1}{2}}}e^{-\frac{(2m-1)\pi}{2}i}\int^{+\infty}_{\rho}\frac{(\sinh r)^{-\frac{2m-3}{2}}}{\sqrt{\cosh r-\cosh\rho}}Q^{(2m-1)/2}_{-1/2}(\cosh r) dr\\
=&\frac{\sqrt{2}}{(2\pi)^{\frac{n+1}{2}}}e^{-\frac{(2m-1)\pi}{2}i}\cdot \Gamma(1/2)e^{\frac{\pi i}{2}}(\sinh\rho)^{1-m}
Q^{m-1}_{-1/2}(\cosh \rho)\\
=&(2\pi)^{-\frac{n}{2}}(\sinh\rho)^{-\frac{n-2}{2}}e^{-\frac{(n-2)\pi}{2}i}Q^{(n-2)/2}_{-1/2}(\cosh\rho).
\end{split}
\end{equation*}
The proof of Lemma \ref{lm3.2b}
is thereby completed.
\end{proof}

Therefore, for $n\geq3$ and $\lambda= \nu^{2}-(n-1)^{2}/4$ with $\nu\geq0$, we have, by Lemma \ref{lm3.2b} and (\ref{b2.5}),
\begin{equation}\label{b2.9}
\begin{split}
&(\nu^{2}-(n-1)^{2}/4-\Delta_{\mathbb{H}})^{-1}=(2\pi)^{-\frac{n}{2}}(\sinh\rho)^{-\frac{n-2}{2}}
e^{-\frac{(n-2)\pi}{2}i}Q^{\frac{n-2}{2}}_{\nu-\frac{1}{2}}(\cosh\rho)\\
=&\frac{(2\pi)^{-\frac{n}{2}}\Gamma(\frac{n-1}{2}+\nu)}{2^{\nu+\frac{1}{2}}
\Gamma(\nu+\frac{1}{2})}(\sinh\rho)^{2-n}\int^{\pi}_{0}(\cosh\rho+\cos t)^{\frac{n-3}{2}-\nu}
(\sin t)^{2\nu}dt.
\end{split}
\end{equation}

\section{More Green's function estimates and Proof of Theorem \ref{th1.1}}

 In this section,
 when $n=2m+1$ and $0\leq k\leq m-1$,  we will  give precise  expressions for the Green functions of the operators
$(k^{2}-(n-1)^{2}/4-\Delta_{\mathbb{H}})^{-1}$   and
$\left[\prod^{l-1}_{j=0}((k+j)^{2}-(n-1)^{2}/4-\Delta_{\mathbb{H}})\right]^{-1}$ for $l\in \{1, 2, \cdots, m\}$ and $k\in \{0, 1, \cdots, m-l\}$.
These estimates play an important role in our proof of Theorem \ref{th1.1}.

\begin{lemma}\label{lm4.1}If $n=2m+1$ and $0\leq k\leq m-1$, then
\begin{equation}\label{4.1}
\begin{split}
&(k^{2}-(n-1)^{2}/4-\Delta_{\mathbb{H}})^{-1}\\
=&\frac{1}{4\pi^{m+\frac{1}{2}}\left(\sinh \rho\right)^{2m-1}}\sum^{m-k-1}_{j=0}\frac{\Gamma(m+k)\Gamma(m-k)\Gamma(m-j-\frac{1}{2})}{\Gamma(j+1)\Gamma(m-k-j)\Gamma(m+k-j)}
\left(\sinh\frac{\rho}{2}\right)^{2j}.
\end{split}
\end{equation}
\end{lemma}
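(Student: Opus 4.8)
The plan is to specialize the closed-form expression (\ref{b2.9}) for the kernel of $\big(\nu^{2}-(n-1)^{2}/4-\Delta_{\mathbb{H}}\big)^{-1}$, already established in Section 3 for every $\nu\geq0$ and $n\geq3$, to the values $\nu=k\in\{0,1,\dots,m-1\}$ and $n=2m+1$. For these values $\tfrac{n-3}{2}-\nu=m-1-k$ and $2\nu=2k$ are non-negative integers, so (\ref{b2.9}) becomes
\begin{equation*}
\big(k^{2}-(n-1)^{2}/4-\Delta_{\mathbb{H}}\big)^{-1}
=\frac{(2\pi)^{-m-1/2}\,\Gamma(m+k)}{2^{k+1/2}\,\Gamma(k+1/2)}\,(\sinh\rho)^{1-2m}\,I_{m,k}(\rho),
\qquad
I_{m,k}(\rho):=\int_{0}^{\pi}(\cosh\rho+\cos t)^{m-1-k}(\sin t)^{2k}\,dt .
\end{equation*}
Thus the whole assertion reduces to evaluating $I_{m,k}(\rho)$ explicitly as a polynomial in $\sinh^{2}\frac{\rho}{2}$ of degree $m-1-k$.

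To evaluate $I_{m,k}$ I would rewrite the integrand using the half-angle identities $\cosh\rho+\cos t=2\big(\sinh^{2}\frac{\rho}{2}+\cos^{2}\frac{t}{2}\big)$ and $\sin t=2\sin\frac{t}{2}\cos\frac{t}{2}$, and then substitute $s=t/2$; this turns $I_{m,k}(\rho)$ into $2^{m+k}\int_{0}^{\pi/2}\big(\sinh^{2}\frac{\rho}{2}+\cos^{2}s\big)^{m-1-k}\sin^{2k}s\,\cos^{2k}s\,ds$. Expanding $\big(\sinh^{2}\frac{\rho}{2}+\cos^{2}s\big)^{m-1-k}$ by the binomial theorem and integrating term by term with the elementary beta integral $\int_{0}^{\pi/2}\sin^{2k}s\,\cos^{2k+2i}s\,ds=\tfrac12\,\Gamma(k+\tfrac12)\Gamma(k+i+\tfrac12)\big/\Gamma(2k+i+1)$, and finally re-indexing by $j=m-1-k-i$ (so that $0\leq j\leq m-1-k$), one obtains
\begin{equation*}
I_{m,k}(\rho)=2^{m+k-1}\,\Gamma\!\big(k+\tfrac12\big)\sum_{j=0}^{m-1-k}
\frac{\Gamma(m-k)\,\Gamma\!\big(m-j-\tfrac12\big)}{\Gamma(j+1)\,\Gamma(m-k-j)\,\Gamma(m+k-j)}\left(\sinh\tfrac{\rho}{2}\right)^{2j},
\end{equation*}
where the three gamma factors come respectively from $\binom{m-1-k}{i}=\Gamma(m-k)/[\Gamma(j+1)\Gamma(m-k-j)]$, from $\Gamma(k+i+\tfrac12)=\Gamma(m-j-\tfrac12)$, and from $\Gamma(2k+i+1)=\Gamma(m+k-j)$.

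The last step is then a routine simplification: substituting this value of $I_{m,k}$ into the displayed Green's-function identity, the factor $\Gamma(k+\tfrac12)$ cancels, the powers of two combine as $2^{-m-1/2}\cdot 2^{-k-1/2}\cdot 2^{m+k-1}=\tfrac14$, the factor $\pi^{-m-1/2}$ extracted from $(2\pi)^{-m-1/2}$ produces the $\pi^{-(m+1/2)}$ in the denominator, and absorbing $\Gamma(m+k)$ into the sum yields precisely the right-hand side of (\ref{4.1}). Along the way one should only note that the hypotheses under which (\ref{b2.1}), (\ref{b2.5}) and (\ref{b2.9}) were derived---namely $\mathrm{Re}\,\nu>-1$ and $\mathrm{Re}(\nu+\mu+1)>0$ with $\mu=\tfrac{n-2}{2}=m-\tfrac12$---are satisfied here since $\nu=k\geq0$, and that the boundary case $k=0$ is already contained in Lemma \ref{lm3.2b}. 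The only step that is not purely mechanical is the half-angle rewriting of the integrand, which is exactly what makes $\sinh^{2}\frac{\rho}{2}$ (rather than $\cosh\rho$) appear directly and so avoids a second binomial expansion; I expect that to be the only place requiring any care.
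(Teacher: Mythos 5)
Your proposal is correct and follows essentially the same route as the paper: both start from the specialization of \eqref{b2.9}, use the half-angle identity on $\cosh\rho+\cos t$ and the binomial theorem, and evaluate the resulting trigonometric integral via the beta function (the paper packages the beta step as its formula \eqref{4.4}, which is exactly the half-angle/beta computation you carry out by hand). The only difference is presentational — you change variables $s=t/2$ before expanding, which forces a reindexing $j=m-1-k-i$, whereas the paper expands in powers of $2\sinh^2\frac{\rho}{2}$ first and so avoids it.
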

\begin{proof}
If $n=2m+1$ and $0\leq k\leq m-1$, then by (\ref{b2.9}) and binomial theorem, we have
\begin{equation}\label{4.2}
\begin{split}
&(k^{2}-(n-1)^{2}/4-\Delta_{\mathbb{H}})^{-1}\\
=&\frac{(2\pi)^{-\frac{n}{2}}\Gamma(m+k)}{2^{k+\frac{1}{2}}
\Gamma(k+\frac{1}{2})}(\sinh\rho)^{1-2m}\int^{\pi}_{0}(\cosh\rho+\cos t)^{m-1-k}
(\sin t)^{2k}dt\\
=&\frac{(2\pi)^{-\frac{n}{2}}\Gamma(m+k)}{2^{k+\frac{1}{2}}
\Gamma(k+\frac{1}{2})}(\sinh\rho)^{1-2m}\int^{\pi}_{0}(2\sinh^{2}\frac{\rho}{2}+1+\cos t)^{m-1-k}
(\sin t)^{2k}dt\\
=&\frac{(2\pi)^{-\frac{n}{2}}\Gamma(m+k)}{2^{k+\frac{1}{2}}
\Gamma(k+\frac{1}{2})}(\sinh\rho)^{1-2m}\sum^{m-1-k}_{j=0}C^{j}_{m-1-k}(2\sinh^{2}\frac{\rho}{2})^{j}
\int^{\pi}_{0}(1+\cos t)^{m-1-k-j}
(\sin t)^{2k}dt,
\end{split}
\end{equation}
where
\begin{equation}\label{4.3}
\begin{split}
C^{j}_{m-1-k}=\frac{(m-1-k)!}{j!(m-1-k-j)!}=\frac{\Gamma(m-k)}{\Gamma(j+1)\Gamma(m-k-j)}.
\end{split}
\end{equation}
Notice that, for $p>-\frac{q+1}{2}$ and $q>-1$,
\begin{equation}\label{4.4}
\begin{split}
\int^{\pi}_{0}(1+\cos t)^{p}(\sin t)^{q}dt=&2^{p+q}\int^{\pi}_{0}(\cos t/2)^{2p+q}(\sin t/2)^{q}dt\\
=&2^{p+q}\textbf{B}\left(p+\frac{q+1}{2},\frac{q+1}{2}\right)\\
=&2^{p+q}\frac{\Gamma(p+\frac{q+1}{2})\Gamma(\frac{q+1}{2})}{\Gamma(p+q+1)}.
\end{split}
\end{equation}
We have, by (\ref{4.2})-(\ref{4.4}),
\begin{equation}\label{4.5}
\begin{split}
&(k^{2}-(n-1)^{2}/4-\Delta_{\mathbb{H}})^{-1}\\
=&\frac{(2\pi)^{-\frac{n}{2}}\Gamma(m+k)}{2^{k+\frac{1}{2}}
\Gamma(k+\frac{1}{2})}(\sinh\rho)^{1-2m}\cdot\\
&\sum^{m-1-k}_{j=0}\frac{\Gamma(m-k)}{\Gamma(j+1)\Gamma(m-k-j)}((2\sinh^{2}\frac{\rho}{2})^{j}
2^{m+k-1-j}\frac{\Gamma(m-j-\frac{1}{2})\Gamma(k+\frac{1}{2})}{\Gamma(m+k-j)}\\
=&\frac{1}{4\pi^{m+\frac{1}{2}}\left(\sinh \rho\right)^{2m-1}}\sum^{m-k-1}_{j=0}\frac{\Gamma(m+k)\Gamma(m-k)\Gamma(m-j-\frac{1}{2})}{\Gamma(j+1)\Gamma(m-k-j)\Gamma(m+k-j)}
\left(\sinh\frac{\rho}{2}\right)^{2j}.
\end{split}
\end{equation}
\end{proof}

In general, we have the following:
\begin{lemma}\label{lm4.2}Let $n=2m+1$ and $l\in\{1,2,\cdots,m\}$. We have, for $k\in\{0,1,\cdots, m-l\}$ and $\rho>0$,
\begin{equation}\label{4.6}
\begin{split}
&\left[\prod^{l-1}_{j=0}((k+j)^{2}-(n-1)^{2}/4-\Delta_{\mathbb{H}})\right]^{-1}\\
=&\frac{1}{4\Gamma(l)\pi^{m+\frac{1}{2}}\left(\sinh \rho\right)^{2m-1}}
\sum^{m-k-l}_{j=0}\frac{\Gamma(m+k)\Gamma(m-k-l+1)\Gamma(m-j-l+\frac{1}{2})}{\Gamma(j+1)\Gamma(m-k-l-j+1)\Gamma(m+k-j)}
\left(\sinh\frac{\rho}{2}\right)^{2j+2l-2}.
\end{split}
\end{equation}
\end{lemma}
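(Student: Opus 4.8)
The plan is to prove (\ref{4.6}) by induction on $l$. The base case $l=1$ is precisely Lemma \ref{lm4.1}, equation (\ref{4.1}). For the inductive step, the main tool is an elementary partial-fraction (telescoping) identity for the commuting operators $A_j:=(k+j)^2-(n-1)^2/4-\Delta_{\mathbb{H}}$, $0\le j\le l-1$, all of which are nonnegative functions of the self-adjoint operator $-\Delta_{\mathbb{H}}$ and whose inverses are given by (\ref{b2.9}) (together with Lemma \ref{lm3.2b} when $k+j=0$). Since $A_{l-1}-A_0=(k+l-1)^2-k^2=(l-1)(2k+l-1)$ is a positive constant, one has
\[
\Big(\prod_{j=0}^{l-1}A_j\Big)^{-1}=\frac{1}{(l-1)(2k+l-1)}\left[\Big(\prod_{j=0}^{l-2}A_j\Big)^{-1}-\Big(\prod_{j=1}^{l-1}A_j\Big)^{-1}\right],
\]
which I would justify first at the level of symbols via the Fourier transform of Section 2 (where $A_j$ has symbol $(k+j)^2+\lambda^2/4$) and then transfer to the convolution kernels. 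The two kernels on the right are exactly the Green's functions covered by the inductive hypothesis with parameters $(l-1,k)$ and $(l-1,k+1)$, and the constraint $0\le k\le m-l$ guarantees that both $k$ and $k+1$ lie in the admissible range $\{0,\dots,m-(l-1)\}$. Thus, writing $G_{l,k}(\rho)$ for the kernel in (\ref{4.6}), it suffices to show $G_{l,k}=\frac{1}{(l-1)(2k+l-1)}(G_{l-1,k}-G_{l-1,k+1})$.

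Next I would substitute the induction hypothesis into $G_{l-1,k}-G_{l-1,k+1}$. Both sums run over powers $(\sinh\tfrac{\rho}{2})^{2j'+2l-4}$, whereas the target $G_{l,k}$ involves $(\sinh\tfrac{\rho}{2})^{2j+2l-2}$, so I would reindex by $j'=j+1$. The terms with $j'=0$ in $G_{l-1,k}$ and in $G_{l-1,k+1}$ are both equal to $\Gamma(m-l+\tfrac32)$ and hence cancel in the difference, consistently with $G_{l,k}$ starting one power of $\sinh\tfrac{\rho}{2}$ higher. After this, the claim reduces to the scalar identity
\[
\frac{1}{\Gamma(j+2)}\!\left[\frac{\Gamma(m+k)\Gamma(m-k-l+2)}{\Gamma(m-k-l-j+1)\Gamma(m+k-j-1)}-\frac{\Gamma(m+k+1)\Gamma(m-k-l+1)}{\Gamma(m-k-l-j)\Gamma(m+k-j)}\right]=\frac{(2k+l-1)\,\Gamma(m+k)\Gamma(m-k-l+1)}{\Gamma(j+1)\,\Gamma(m+k-j)\Gamma(m-k-l-j+1)}
\]
for each relevant $j$, with the usual convention $1/\Gamma(\text{non-positive integer})=0$ taking care of the top index $j=m-k-l$ (where the $G_{l-1,k+1}$ contribution vanishes).

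This last identity is a short finite polynomial identity. Writing $a=m+k$ and $b=m-k-l+1$, the bracket equals $\prod_{i=1}^{j+1}(a-i)\prod_{i=0}^{j}(b-i)-\prod_{i=0}^{j}(a-i)\prod_{i=1}^{j+1}(b-i)$; factoring out $\prod_{i=1}^{j}(a-i)(b-i)$ collapses it to $(j+1)(a-b)\prod_{i=1}^{j}(a-i)(b-i)$, and $a-b=2k+l-1$. Dividing by $\Gamma(j+2)=(j+1)!$ and rewriting the products as ratios of Gamma functions yields the right-hand side above; the prefactors then match because $(l-1)\Gamma(l-1)=\Gamma(l)$ and the factor $2k+l-1$ cancels against the one from the telescoping identity, producing exactly (\ref{4.6}).

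The only genuine computation here is the polynomial identity in the last paragraph; everything else is the telescoping identity and bookkeeping with (\ref{b2.9}), so I expect that identity (or, equivalently, correctly tracking the two summation ranges and their overlap) to be the only real obstacle. An alternative avoiding induction is to expand the symbol directly as $\prod_{j=0}^{l-1}((k+j)^2+\lambda^2/4)^{-1}=\sum_{j=0}^{l-1}c_j((k+j)^2+\lambda^2/4)^{-1}$ with $c_j=\prod_{i\ne j}\big((k+i)^2-(k+j)^2\big)^{-1}$, insert Lemma \ref{lm4.1} into each summand and resum; but this forces a slightly more involved hypergeometric-type identity, so I would present the telescoping induction instead.
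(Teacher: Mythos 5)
Your proof is correct and follows essentially the same route as the paper: induction on $l$ via the telescoping resolvent identity for the commuting operators $A_j$, with the reindexed Gamma-factor identity doing the arithmetic (the paper's equation (4.8) is precisely your polynomial identity in the variables $a=m+k$, $b=m-k-l+1$). The only cosmetic difference is that you step from $l-1$ to $l$ while the paper steps from $l$ to $l+1$; the cancellation of the $j'=0$ terms and the treatment of the top index that you flag are handled in the paper by an explicit split of the boundary term in (4.9), but both treatments are equivalent.
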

\begin{proof}
We  prove it by induction. By Lemma \ref{lm4.1},  (\ref{4.6}) is valid for
$l=1$. Assume (\ref{4.6}) is valid for $l>1$. Then for $l+1$, we have
\begin{equation}\label{4.7}
\begin{split}
&\left[\prod^{l}_{j=0}((k+j)^{2}-(n-1)^{2}/4-\Delta_{\mathbb{H}})\right]^{-1}\\
=&\frac{1}{(2k+l)l}\left\{\left[\prod^{l-1}_{j=0}((k+j)^{2}-(n-1)^{2}/4-\Delta_{\mathbb{H}})\right]^{-1}-
\left[\prod^{l}_{j=1}((k+j)^{2}-(n-1)^{2}/4-\Delta_{\mathbb{H}})\right]^{-1}\right\}\\
=&\frac{1}{(2k+l)l}\left\{\left[\prod^{l-1}_{j=0}((k+j)^{2}-(n-1)^{2}/4-\Delta_{\mathbb{H}})\right]^{-1}-
\left[\prod^{l-1}_{j=0}((k+1+j)^{2}-(n-1)^{2}/4-\Delta_{\mathbb{H}})\right]^{-1}\right\}
\\
=&\frac{\pi^{-m-\frac{1}{2}}\left(\sinh \rho\right)^{1-2m}}{(2k+l)l4\Gamma(l)}
\left[\sum^{m-k-l}_{j=0}\frac{\Gamma(m+k)\Gamma(m-k-l+1)\Gamma(m-j-l+\frac{1}{2})}{\Gamma(j+1)\Gamma(m-k-l-j+1)\Gamma(m+k-j)}
\left(\sinh\frac{\rho}{2}\right)^{2j+2l-2}-\right.\\
&\left.\sum^{m-k-1-l}_{j=0}\frac{\Gamma(m+k+1)\Gamma(m-k-l)\Gamma(m-j-l+\frac{1}{2})}{\Gamma(j+1)\Gamma(m-k-l-j)\Gamma(m+k+1-j)}
\left(\sinh\frac{\rho}{2}\right)^{2j+2l-2}\right].
\end{split}
\end{equation}
We compute
\begin{equation}\label{4.8}
\begin{split}
&\frac{\Gamma(m+k)\Gamma(m-k-l+1)}{\Gamma(m-k-l-j+1)\Gamma(m+k-j)}-\frac{\Gamma(m+k+1)\Gamma(m-k-l)}{\Gamma(m-k-l-j)\Gamma(m+k+1-j)}\\
=&\frac{\Gamma(m+k)\Gamma(m-k-l+1)}{\Gamma(m-k-l-j+1)\Gamma(m+k-j)}\left[1-\frac{(m+k)(m-k-l-j)}{(m+k-j)(m-k-l)}\right]\\
=&\frac{\Gamma(m+k)\Gamma(m-k-l+1)}{\Gamma(m-k-l-j+1)\Gamma(m+k-j)}\cdot\frac{(2k+l)j}{(m+k-j)(m-k-l)}\\
=&(2k+l)\frac{j\Gamma(m+k)\Gamma(m-k-l)}{\Gamma(m-k-l-j+1)\Gamma(m+k+1-j)}.
\end{split}
\end{equation}
Therefore, combining (\ref{4.7}) and (\ref{4.8}) yields
\begin{equation}\label{4.9}
\begin{split}
&\left[\prod^{l}_{j=0}((k+j)^{2}-(n-1)^{2}/4-\Delta_{\mathbb{H}})\right]^{-1}\\
=&\frac{\pi^{-m-\frac{1}{2}}\left(\sinh \rho\right)^{1-2m}}{(2k+l)l4\Gamma(l)}
\left[\frac{\Gamma(m+k)\Gamma(k+\frac{1}{2})}{\Gamma(2k+l)}\left(\sinh\frac{\rho}{2}\right)^{2m-2k-2}+\right.\\
&\left.(2k+l)\sum^{m-k-1-l}_{j=1}\frac{j\Gamma(m+k)\Gamma(m-k-l)\Gamma(m-j-l+\frac{1}{2})}{\Gamma(j+1)\Gamma(m-k-l-j+1)\Gamma(m+k+1-j)}
\left(\sinh\frac{\rho}{2}\right)^{2j+2l-2}\right].\\
=&\frac{\pi^{-m-\frac{1}{2}}\left(\sinh \rho\right)^{1-2m}}{4\Gamma(l+1)}
\left[\frac{\Gamma(m+k)\Gamma(k+\frac{1}{2})}{\Gamma(2k+l+1)}\left(\sinh\frac{\rho}{2}\right)^{2m-2k-2}+\right.\\
&\left.\sum^{m-k-2-l}_{j=0}\frac{\Gamma(m+k)\Gamma(m-k-l)\Gamma(m-j-l-\frac{1}{2})}{\Gamma(j+1)\Gamma(m-k-l-j)\Gamma(m+k-j)}
\left(\sinh\frac{\rho}{2}\right)^{2j+2l}\right].\\
=&\frac{\pi^{-m-\frac{1}{2}}\left(\sinh \rho\right)^{1-2m}}{4\Gamma(l+1)}\sum^{m-k-1-l}_{j=0}\frac{\Gamma(m+k)\Gamma(m-k-l)\Gamma(m-j-l-\frac{1}{2})}{\Gamma(j+1)\Gamma(m-k-l-j)\Gamma(m+k-j)}
\left(\sinh\frac{\rho}{2}\right)^{2j+2l}.
\end{split}
\end{equation}
The desired result follows.
\end{proof}

Before the proof of the next lemma, we recall the  Hardy-Littlewood-Sobolev inequality on hyperbolic space which was first proved by Beckner  \cite{be1} on upper half spaces (see also another proof in \cite{LuYang3} for the equivalent form on hyperbolic balls).

\begin{theorem}\label{th4.1}
Let $0<\lambda<n$ and $p=\frac{2n}{2n-\lambda}$. Then for $f,g\in L^{p}(\mathbb{B}^{n})$,
\begin{equation}\label{4.10}
\left|\int_{\mathbb{B}^{n}}\int_{\mathbb{B}^{n}}\frac{f(x)g(y)}{\left(2\sinh\frac{\rho(T_{y}(x))}{2}\right)^{\lambda}}dV_{x}dV_{y}\right|\leq C_{n,\lambda}\|f\|_{p}\|g\|_{p},
\end{equation}
 where
\begin{equation}\label{4.11}
C_{n,\lambda}=\pi^{\lambda/2}\frac{\Gamma(n/2-\lambda/2)}{\Gamma(n-\lambda/2)}\left(\frac{\Gamma(n/2)}{\Gamma(n)}\right)^{-1+\lambda/n}
\end{equation}
 is the best constant for the classical Hardy-Littlewood-Sobolev constant on $\mathbb{R}^{n}$.
Furthermore, the constant $C_{n,\lambda}$
 is sharp for the inequality (\ref{4.10}) and there is no nonzero extremal function for the inequality (\ref{4.10}).
\end{theorem}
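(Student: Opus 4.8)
The plan is to reduce the hyperbolic inequality \eqref{4.10} to the classical sharp Hardy--Littlewood--Sobolev inequality on $\mathbb{R}^n$ by an explicit conformal change of the unknown functions. First I would record the elementary identity
\begin{equation*}
2\sinh\frac{\rho(T_y(x))}{2}=\frac{2|x-y|}{\sqrt{(1-|x|^2)(1-|y|^2)}},\qquad x,y\in\mathbb{B}^n,
\end{equation*}
which follows from $1-|T_y(x)|^2=\frac{(1-|x|^2)(1-|y|^2)}{|x-y|^2+(1-|x|^2)(1-|y|^2)}$ together with $\sinh\frac{\rho}{2}=|T_y(x)|\big/\sqrt{1-|T_y(x)|^2}$. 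Substituting this identity and $dV=\bigl(\tfrac{2}{1-|x|^2}\bigr)^n dx$ into the left side of \eqref{4.10} turns the kernel into the Euclidean Riesz kernel $|x-y|^{-\lambda}$ at the cost of the conformal weights $(1-|x|^2)^{\lambda/2-n}$, $(1-|y|^2)^{\lambda/2-n}$ and an overall factor $2^{2n-\lambda}$.

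Next I would set $F(x)=f(x)(1-|x|^2)^{\lambda/2-n}$ and $G(y)=g(y)(1-|y|^2)^{\lambda/2-n}$, regarded as functions on $\mathbb{R}^n$ supported in $\mathbb{B}^n$. The exponent condition $p=\frac{2n}{2n-\lambda}$ is exactly what forces $p(\lambda/2-n)=-n$, so that $\|F\|_{L^p(\mathbb{R}^n)}^p=\int_{\mathbb{B}^n}|f|^p(1-|x|^2)^{-n}\,dx=2^{-n}\|f\|_{L^p(\mathbb{B}^n,dV)}^p$, and likewise for $G$. A short bookkeeping of the powers of $2$ (using $2n/p=2n-\lambda$) shows the extraneous constants cancel precisely, so that \eqref{4.10} is \emph{equivalent} to the classical sharp HLS inequality
\begin{equation*}
\Bigl|\int_{\mathbb{R}^n}\int_{\mathbb{R}^n}\frac{F(x)G(y)}{|x-y|^\lambda}\,dx\,dy\Bigr|\le C_{n,\lambda}\,\|F\|_{L^p(\mathbb{R}^n)}\|G\|_{L^p(\mathbb{R}^n)}
\end{equation*}
\emph{restricted to functions supported in the open unit ball}. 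Invoking Lieb's sharp form of the Euclidean HLS inequality then yields \eqref{4.10} with the constant $C_{n,\lambda}$ of \eqref{4.11}.

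Finally, for the sharpness and nonexistence statements I would use that the correspondence $f\leftrightarrow F$ is a norm-isomorphism (up to the explicit factor $2^{-n/p}$) between $L^p(\mathbb{B}^n,dV)$ and $\{F\in L^p(\mathbb{R}^n):\operatorname{supp}F\subset\mathbb{B}^n\}$, so the best constant in \eqref{4.10} equals the supremum of the Euclidean HLS quotient over functions supported in $\mathbb{B}^n$. Taking the Lieb bubbles $h_\varepsilon(x)=(\varepsilon^2+|x|^2)^{-(2n-\lambda)/2}$ truncated to $\mathbb{B}^n$ and letting $\varepsilon\to0$ produces a maximizing sequence, so the supremum is still $C_{n,\lambda}$; hence the constant in \eqref{4.10} is sharp. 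If it were attained by some $f\neq0$, the corresponding $F$ (supported in $\mathbb{B}^n$) would be an extremizer for the Euclidean HLS inequality on all of $\mathbb{R}^n$, forcing $F$ to be a translate-dilate of $(1+|x|^2)^{-(2n-\lambda)/2}$, which is nowhere zero --- a contradiction. I expect the main technical points to be the careful verification of the distance identity and the exponent/constant bookkeeping (so that it is genuinely the \emph{same} $C_{n,\lambda}$, with no stray powers of $2$), and, for the rigidity part, the clean reduction to the classification of Euclidean HLS extremizers.
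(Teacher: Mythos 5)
Your proposal is correct. Note that the paper does not actually prove Theorem~\ref{th4.1}: it is quoted as a known result, attributed to Beckner and to \cite{LuYang3} for the ball-model form, so there is no in-paper proof to compare against. That said, the conformal change of variables you carry out --- using the identity $2\sinh\frac{\rho(T_y(x))}{2}=\frac{2|x-y|}{\sqrt{(1-|x|^2)(1-|y|^2)}}$, substituting $F=f(1-|x|^2)^{\lambda/2-n}$, $G=g(1-|y|^2)^{\lambda/2-n}$, and observing that the choice $p=\frac{2n}{2n-\lambda}$ makes both the $L^p$ norms and the powers of $2$ match exactly --- is precisely the standard reduction to Lieb's sharp Euclidean HLS used in the cited references. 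The bookkeeping checks out ($2^{2n-\lambda}\cdot 2^{-2n/p}=1$), the sharpness follows from the concentrating Lieb bubbles, and the nonexistence of extremizers follows from Lieb's classification of Euclidean HLS optimizers as nowhere-vanishing bubble functions, none of which can be supported inside $\mathbb{B}^n$. So the argument is sound and is essentially the proof the authors have in mind when they cite this result.
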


We are now ready to prove the following

\begin{lemma}\label{lm4.4} Let $n\geq5$ be odd. There holds,  for each $u\in C^{\infty}_{0}(\mathbb{B}^{n})$,
\begin{equation*}
\int_{\mathbb{B}^{n}}\prod^{(n-3)/2}_{j=0}(j^{2}-(n-1)^{2}/4-\Delta_{\mathbb{H}}) u\cdot udV\geq S_{n,(n-1)/2}\left(\int_{\mathbb{B}^{n}}|u|^{2n}dV\right)^{\frac{1}{n}}.
\end{equation*}
\end{lemma}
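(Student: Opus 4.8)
The plan is to dualize the asserted coercivity of
\[
L := \prod_{j=0}^{(n-3)/2}\Bigl(j^2 - \frac{(n-1)^2}{4} - \Delta_{\mathbb{H}}\Bigr)
\]
into a pointwise bound on its Green's function, which is available in closed form, and then to recognize that bound as an instance of the sharp Hardy--Littlewood--Sobolev inequality on $\mathbb{B}^n$ (Theorem \ref{th4.1}). Write $n = 2m+1$, so that $L = \prod_{j=0}^{m-1}(j^2 - \frac{(n-1)^2}{4} - \Delta_{\mathbb{H}})$ and the critical exponent is $\frac{2n}{n-2m} = 2n$. Each factor $j^2 - \frac{(n-1)^2}{4} - \Delta_{\mathbb{H}}$ is a nonnegative self-adjoint operator on $L^2(\mathbb{B}^n)$ — by the Poincar\'e inequality $\int_{\mathbb{B}^n}|\nabla_{\mathbb{H}}f|^2\,dV \geq \frac{(n-1)^2}{4}\int_{\mathbb{B}^n}f^2\,dV$ together with $j^2 \geq 0$ — and these factors commute, so $L\geq0$ and the bilinear form $B(f,g) := \int_{\mathbb{B}^n}(Lf)g\,dV$ is a symmetric positive semidefinite semi--inner product on $C_0^\infty(\mathbb{B}^n)$.

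First I would pin down the Green's function: applying Lemma \ref{lm4.2} with $k = 0$ and $l = m$ collapses the sum to its single $j=0$ term, and simplifying the Gamma factors with $\sinh\rho = 2\sinh\frac{\rho}{2}\cosh\frac{\rho}{2}$ gives
\[
L^{-1}(\rho) = \frac{1}{2\,\gamma_n(n-1)\,\sinh\frac{\rho}{2}\,(\cosh\frac{\rho}{2})^{n-2}}, \qquad \rho > 0,
\]
where $\gamma_n(n-1) = \pi^{(n-1)/2}2^{n-1}\Gamma(\frac{n-1}{2})$ by (\ref{4.13}). Since $n \geq 5$ we have $(\cosh\frac{\rho}{2})^{n-2}\geq1$, hence
\[
0 < L^{-1}(\rho) \leq \frac{1}{\gamma_n(n-1)}\cdot\frac{1}{2\sinh\frac{\rho}{2}}, \qquad \rho>0.
\]
Thus, for $g \in C_0^\infty(\mathbb{B}^n)$, writing $w := L^{-1}g$ for the convolution of $g$ with the kernel $L^{-1}(\rho)$ and invoking Theorem \ref{th4.1} with $\lambda = 1$ (so that $p = \frac{2n}{2n-1}$ is the conjugate exponent of $2n$),
\begin{equation*}
\begin{split}
\int_{\mathbb{B}^n} w\,g\,dV &= \int_{\mathbb{B}^n}\int_{\mathbb{B}^n} L^{-1}\bigl(\rho(T_y(x))\bigr)g(x)g(y)\,dV_x\,dV_y\\
&\leq \frac{C_{n,1}}{\gamma_n(n-1)}\Bigl(\int_{\mathbb{B}^n}|g|^{\frac{2n}{2n-1}}dV\Bigr)^{\frac{2n-1}{n}}.
\end{split}
\end{equation*}

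The heart of the matter is then the constant identity
\[
\frac{C_{n,1}}{\gamma_n(n-1)} = \frac{1}{S_{n,(n-1)/2}},
\]
which I would verify using the explicit value of $C_{n,1}$ in (\ref{4.11}), of $\gamma_n(n-1)$ in (\ref{4.13}), of $S_{n,(n-1)/2}$ in (\ref{1.2}) together with $\omega_n = 2^n\pi^{n/2}\Gamma(\frac{n}{2})/\Gamma(n)$, and the Legendre duplication formula for $\Gamma(n)$. This is precisely the place where the special relation $2k = n-1$ is used: it is what lets the sharp HLS constant reproduce the sharp Sobolev constant. Granting it, $\int_{\mathbb{B}^n}w\,g\,dV \leq S_{n,(n-1)/2}^{-1}\|g\|_{L^{2n/(2n-1)}}^2$.

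Finally I would pass back by duality. The kernel $L^{-1}(\rho)$ being (through its construction via the heat semigroup in Sections 3 and 4) the Green's function of $L$, the function $w = L^{-1}g$ is smooth on $\mathbb{B}^n$, bounded, decays at the boundary, and satisfies $Lw = g$; hence for $u \in C_0^\infty(\mathbb{B}^n)$, integration by parts gives $\int_{\mathbb{B}^n}u\,g\,dV = \int_{\mathbb{B}^n}(Lu)\,w\,dV = B(u,w)$. Cauchy--Schwarz for the semi--inner product $B$ then yields
\begin{equation*}
\begin{split}
\Bigl|\int_{\mathbb{B}^n}u\,g\,dV\Bigr| &\leq B(u,u)^{1/2}B(w,w)^{1/2} = \Bigl(\int_{\mathbb{B}^n}(Lu)u\,dV\Bigr)^{1/2}\Bigl(\int_{\mathbb{B}^n}w\,g\,dV\Bigr)^{1/2}\\
&\leq S_{n,(n-1)/2}^{-1/2}\Bigl(\int_{\mathbb{B}^n}(Lu)u\,dV\Bigr)^{1/2}\|g\|_{L^{2n/(2n-1)}}.
\end{split}
\end{equation*}
Taking the supremum over $g \in C_0^\infty(\mathbb{B}^n)$ with $\|g\|_{L^{2n/(2n-1)}} = 1$ (dense in the unit ball of $L^{2n/(2n-1)}(\mathbb{B}^n)$, the predual of $L^{2n}(\mathbb{B}^n)$) gives $\|u\|_{L^{2n}} \leq S_{n,(n-1)/2}^{-1/2}\bigl(\int_{\mathbb{B}^n}(Lu)u\,dV\bigr)^{1/2}$, which is the assertion. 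I expect the two points requiring real care to be (a) the constant identity above — routine Gamma-function algebra, but unforgiving — and (b) the smoothness and boundary decay of $w = L^{-1}g$ justifying the integration by parts $\int_{\mathbb{B}^n}u(Lw)\,dV = \int_{\mathbb{B}^n}(Lu)w\,dV$, which follows from the integrable Riesz-potential-type singularity of $L^{-1}(\rho)$ at $\rho = 0$ and its exponential decay as $\rho\to\infty$; the positivity of $B$ and the bound coming from HLS dispose of the rest.
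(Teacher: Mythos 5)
Your proposal is correct and follows essentially the same route as the paper: invoke Lemma \ref{lm4.2} with $k=0$, $l=m$ to get the closed form $L^{-1}(\rho) = \frac{1}{\gamma_n(n-1)}\,\frac{1}{2\sinh\frac{\rho}{2}(\cosh\frac{\rho}{2})^{n-2}}$, bound it by $\frac{1}{\gamma_n(n-1)}\cdot\frac{1}{2\sinh\frac{\rho}{2}}$, and apply the sharp HLS inequality (Theorem \ref{th4.1}) with $\lambda=1$. The only difference is that you make explicit the dualization step (Cauchy--Schwarz for the positive semidefinite form $B$, plus the constant identity $C_{n,1}/\gamma_n(n-1) = 1/S_{n,(n-1)/2}$, which indeed checks out via Legendre duplication), whereas the paper simply asserts that proving the dual HLS-type estimate (\ref{4.12}) suffices and leaves the equivalence to the reader.
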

\begin{proof}
It is sufficient to show the following inequality:
\begin{equation}\label{4.12}
\begin{split}
&\int_{\mathbb{B}^{n}}u(x) \left[\left(\prod^{(n-3)/2}_{j=0}(j^{2}-(n-1)^{2}/4-\Delta_{\mathbb{H}})\right)^{-1}u\right](x)dV\\
\leq& \frac{C_{n,1}}{\gamma_{n}(n-1)}\left(\int_{\mathbb{B}^{n}}|u(x)|^{\frac{2n}{2n-1}}dV\right)^{\frac{2n-1}{n}}.
\end{split}
\end{equation}

Choosing $k=0$ and $l=m$ in  Lemma \ref{lm4.2}, we have
\begin{equation}\label{4.14}
\begin{split}
\left[\prod^{(n-3)/2}_{j=0}(j^{2}-(n-1)^{2}/4-\Delta_{\mathbb{H}})\right]^{-1}
=&\frac{1}{4\Gamma(\frac{n-1}{2})\pi^{\frac{n}{2}}\left(\sinh \rho\right)^{n-2}}
\Gamma\left(\frac{1}{2}\right)\left(\sinh\frac{\rho}{2}\right)^{n-3}\\
=&\frac{\Gamma\left(\frac{1}{2}\right)}{4\Gamma(\frac{n-1}{2})\pi^{\frac{n}{2}}}\cdot\frac{1}{2^{n-2}(\cosh\frac{\rho}{2})^{n-2}\sinh\frac{\rho}{2}}\\
=&\frac{1}{\gamma_{n}(n-1)}\frac{1}{(\cosh\frac{\rho}{2})^{n-2}2\sinh\frac{\rho}{2}}\\
\leq&\frac{1}{\gamma_{n}(n-1)}\cdot\frac{1}{2\sinh\frac{\rho}{2}}.
\end{split}
\end{equation}
Therefore, by Theorem \ref{th4.1}, we have
\begin{equation*}
\begin{split}
&\int_{\mathbb{B}^{n}}u(x) \left[\left(\prod^{(n-1)/2}_{k=0}(k^{2}-(n-1)^{2}/4-\Delta_{\mathbb{H}})\right)^{-1}u\right](x)dV\\
\leq& \frac{1}{\gamma_{n}(n-1)}\int_{\mathbb{B}^{n}}\int_{\mathbb{B}^{n}}\frac{|u(x)|\cdot |u(y)|}{2\sinh\frac{\rho(T_{y}(x))}{2}}dV_{x}dV_{y}\\
\leq&\frac{C_{n,1}}{\gamma_{n}(n-1)}\left(\int_{\mathbb{B}^{n}}|u(x)|^{\frac{2n}{2n-1}}dV\right)^{\frac{2n-1}{n}}.
\end{split}
\end{equation*}
This proves inequality (\ref{4.12}). The proof of Lemma \ref{lm4.4} is thus completed.
\end{proof}

\textbf{Proof of Theorem \ref{th1.1}}. By Lemma \ref{lm4.4}, it is enough to show
\begin{equation}\label{4.15}
\begin{split}
\int_{\mathbb{B}^{n}}(P_{(n-1)/2}u)udV- \prod^{(n-1)/2}_{j=1}\frac{(2j-1)^{2}}{4}\int_{\mathbb{B}^{n}}u^{2}dV\geq
\int_{\mathbb{B}^{n}}\prod^{(n-3)/2}_{j=0}(j^{2}-(n-1)^{2}/4-\Delta_{\mathbb{H}}) u\cdot udV.
\end{split}
\end{equation}
By Plancherel's formula, we have
\begin{equation}\label{4.16}
\begin{split}
&\int_{\mathbb{B}^{n}}(P_{(n-1)/2}u)udV- \prod^{(n-1)/2}_{j=1}\frac{(2j-1)^{2}}{4}\int_{\mathbb{B}^{n}}u^{2}dV\\
=&D_{n}\int^{+\infty}_{-\infty}\int_{\mathbb{S}^{n-1}}\prod^{(n-1)/2}_{j=1}\left(\frac{(n-1)^{2}+\lambda^{2}}{4}-\frac{(n+2j-2)(n-2j)}{4}\right)
|\widehat{f}(\lambda,\zeta)|^{2}|\mathfrak{c}(\lambda)|^{-2}d\lambda d\sigma(\varsigma)\\
&-\prod^{(n-1)/2}_{j=1}\frac{(2j-1)^{2}}{4}
\cdot D_{n}\int^{+\infty}_{-\infty}\int_{\mathbb{S}^{n-1}}
|\widehat{f}(\lambda,\zeta)|^{2}|\mathfrak{c}(\lambda)|^{-2}d\lambda d\sigma(\varsigma)\\
=&D_{n}\int^{+\infty}_{-\infty}\int_{\mathbb{S}^{n-1}}\left(\prod^{(n-1)/2}_{j=1}\frac{(2j-1)^{2}+\lambda^{2}}{4}-\prod^{(n-1)/2}_{j=1}\frac{(2j-1)^{2}}{4}
\right)|\widehat{f}(\lambda,\zeta)|^{2}|\mathfrak{c}(\lambda)|^{-2}d\lambda d\sigma(\varsigma);
\end{split}
\end{equation}
and
\begin{equation}\label{4.17}
\begin{split}
&\int_{\mathbb{B}^{n}}\prod^{(n-3)/2}_{j=0}(j^{2}-(n-1)^{2}/4-\Delta_{\mathbb{H}}) u\cdot udV\\
=&D_{n}\int^{+\infty}_{-\infty}\int_{\mathbb{S}^{n-1}}\prod^{(n-3)/2}_{j=0}\frac{j^{2}+\lambda^{2}}{4}
|\widehat{f}(\lambda,\zeta)|^{2}|\mathfrak{c}(\lambda)|^{-2}d\lambda d\sigma(\varsigma).
\end{split}
\end{equation}
Since
\begin{equation}\label{4.18}
\begin{split}
\prod^{(n-1)/2}_{j=1}\frac{(2j-1)^{2}+\lambda^{2}}{4}-\prod^{(n-1)/2}_{j=1}\frac{(2j-1)^{2}}{4}
\geq&\frac{\lambda^{2}}{4}\prod^{(n-1)/2}_{j=2}\frac{(2j-1)^{2}+\lambda^{2}}{4}\\
=&\frac{\lambda^{2}}{4}\prod^{(n-3)/2}_{j=1}\frac{(2j+1)^{2}+\lambda^{2}}{4}\\
\geq&\frac{\lambda^{2}}{4}\prod^{(n-3)/2}_{j=1}\frac{j^{2}+\lambda^{2}}{4}\\
=&\prod^{(n-3)/2}_{j=0}\frac{j^{2}+\lambda^{2}}{4},
\end{split}
\end{equation}
we get, by (\ref{4.16}) and (\ref{4.17}),
\begin{equation}\label{4.19}
\begin{split}
&\int_{\mathbb{B}^{n}}(P_{(n-1)/2}u)udV- \prod^{(n-1)/2}_{j=1}\frac{(2j-1)^{2}}{4}\int_{\mathbb{B}^{n}}u^{2}dV\\
=&D_{n}\int^{+\infty}_{-\infty}\int_{\mathbb{S}^{n-1}}\left(\prod^{(n-1)/2}_{j=1}\frac{(2j-1)^{2}+\lambda^{2}}{4}-\prod^{(n-1)/2}_{j=1}\frac{(2j-1)^{2}}{4}
\right)|\widehat{f}(\lambda,\zeta)|^{2}|\mathfrak{c}(\lambda)|^{-2}d\lambda d\sigma(\varsigma)\\
\geq&D_{n}\int^{+\infty}_{-\infty}\int_{\mathbb{S}^{n-1}}\prod^{(n-3)/2}_{j=0}\frac{j^{2}+\lambda^{2}}{4}
|\widehat{f}(\lambda,\zeta)|^{2}|\mathfrak{c}(\lambda)|^{-2}d\lambda d\sigma(\varsigma)\\
=&\int_{\mathbb{B}^{n}}\prod^{(n-3)/2}_{j=0}(j^{2}-(n-1)^{2}/4-\Delta_{\mathbb{H}}) u\cdot udV.
\end{split}
\end{equation}
This proves the inequality (\ref{4.15}).  The proof of Theorem \ref{th1.1} is thereby completed.

\section{Estimate of a lower bound for the coefficient of the Hardy term in the Hardy-Sobolev-Maz'ya inequality: Proof of Theorem \ref{th1.2}}

In this section, we will establish a lower bound for the  coefficient $\lambda$ of the Hardy term in the higher order Hardy-Sobolev-Maz'ya inequality.

In what follows, $a=O(b)$ will stand for $a\leq C b$  and $a\sim b$ will stand for $C^{-1}b\leq a\leq C b$ with a positive constant $C$.

 Set
\begin{equation}\label{5.1}
\begin{split}
u_{k,\varepsilon}=&\left(\frac{1-|x|^{2}}{2}\right)^{\frac{n-2k}{2}}\left(\frac{2}{\varepsilon+|x|^{2}}\right)^{\frac{n-2k}{2}}.
\end{split}
\end{equation}
Let $0<\delta<1$. For $|x|<\delta$, we have, by the Taylor series of $\left(\frac{1}{1-t}\right)^{\frac{n-2k}{2}}$,
\begin{equation}\label{5.2}
\begin{split}
u_{k,\varepsilon}=&\left(\frac{1-|x|^{2}}{2}\right)^{\frac{n-2k}{2}}
\left(\frac{2}{\delta^{2}+\varepsilon}\right)^{\frac{n-2k}{2}}\left(\frac{1}{1-\frac{\delta^{2}-|x|^{2}}{\delta^{2}+\varepsilon}}\right)^{\frac{n-2k}{2}}\\
=&\left(\frac{1-|x|^{2}}{2}\right)^{\frac{n-2k}{2}}\left(\frac{2}{\delta^{2}+\varepsilon}\right)^{\frac{n-2k}{2}}
\sum^{\infty}_{j=0}\frac{\Gamma(j+\frac{n-2k}{2})}{\Gamma(j+1)\Gamma(\frac{n-2k}{2})}
\left(\frac{\delta^{2}-|x|^{2}}{\delta^{2}+\varepsilon}\right)^{j}.
\end{split}
\end{equation}

 Set
\begin{equation}\label{5.3}
\begin{split}
f_{k,\varepsilon}=\left\{
                    \begin{array}{ll}
                    u_{k,\varepsilon}-\left(\frac{1-|x|^{2}}{2}\right)^{\frac{n-2k}{2}}\left(\frac{2}{\delta^{2}+\varepsilon}\right)^{\frac{n-2k}{2}}
\sum\limits^{k-1}_{j=0}
\frac{\Gamma(j+\frac{n-2k}{2})}{\Gamma(j+1)\Gamma(\frac{n-2k}{2})}
\left(\frac{1-|x|^{2}}{\delta^{2}+\varepsilon}\right)^{j}  , & \hbox{$|x|<\delta$;} \\
                      0, & \hbox{$\delta\leq |x|<1$.}
                    \end{array}
                  \right.
\end{split}
\end{equation}
It is known that
\begin{equation}\label{5.4}
\begin{split}
(-\Delta)^{k}\left(\frac{2}{\varepsilon+|x|^{2}}\right)^{\frac{n-2k}{2}}=\varepsilon^{k}\frac{\Gamma(\frac{n+2k}{2})}{\Gamma(\frac{n-2k}{2})}
\left(\frac{2}{\varepsilon+|x|^{2}}\right)^{\frac{n+2k}{2}}=\varepsilon^{k}\frac{S_{n,k}}{\omega^{2k/n}_{n}}
\left(\frac{2}{\varepsilon+|x|^{2}}\right)^{\frac{n+2k}{2}},
\end{split}
\end{equation}
where $\omega_{n}$ is the surface area of $\mathbb{S}^{n}$ and $S_{n,k}=\frac{\Gamma(\frac{n+2k}{2})}{\Gamma(\frac{n-2k}{2})}\omega^{2k/n}_{n}$ is the best Sobolev
constant.
Using the  following identity  (see \cite{liu}, Theorem 2.3):
\begin{equation}\label{5.5}
\left(\frac{1-|x|^{2}}{2}\right)^{k+\frac{n}{2}}(-\Delta)^{k}\left[\left(\frac{1-|x|^{2}}{2}\right)^{k-\frac{n}{2}}f\right]=P_{k}f,\;\; f\in C^{\infty}_{0}(\mathbb{B}^{n}),\;\;k\in\mathbb{N},
\end{equation}
 we have, for $|x|<\delta$,
\begin{equation}\label{5.6}
\begin{split}
P_{k}u_{k,\varepsilon}=&\varepsilon^{k}\frac{\Gamma(\frac{n+2k}{2})}{\Gamma(\frac{n-2k}{2})}
u^{\frac{n+2k}{n-2k}}_{k,\varepsilon};\\
P_{k}f_{k,\varepsilon}=&P_{k}u_{k,\varepsilon}=\varepsilon^{k}\frac{\Gamma(\frac{n+2k}{2})}{\Gamma(\frac{n-2k}{2})}
u^{\frac{n+2k}{n-2k}}_{k,\varepsilon}.\\
\end{split}
\end{equation}

The proof of Theorem \ref{th1.2} is divided into two parts.
\subsection{Case I: $n\geq4k$}
\begin{lemma} \label{lm5.1}For $\varepsilon>0$ small enough, we have
\begin{equation}\label{b5.7}
\begin{split}
\int_{\mathbb{B}^{n}}(P_{k}f_{k,\varepsilon})f_{k,\varepsilon}dV\leq&\varepsilon^{k-n/2}
\frac{\Gamma(\frac{n+2k}{2})}{\Gamma(\frac{n-2k}{2})}\int_{\mathbb{R}^{n}}
\left(\frac{2}{1+|x|^{2}}\right)^{n}dx;\\
\int_{\mathbb{B}^{n}}|f_{k,\varepsilon}|^{\frac{2n}{n-2k}}dV\geq&\varepsilon^{-n/2}
\left[\int_{\mathbb{R}^{n}}
\left(\frac{2}{1+|x|^{2}}\right)^{n}dx+O(\varepsilon^{\frac{n-2k}{2}})\right];\\
\int_{\mathbb{B}^{n}}f_{k,\varepsilon}^{2}dV\thicksim&\left\{
       \begin{array}{ll}
         \varepsilon^{2k-\frac{n}{2}}, & \hbox{$n>4k$;} \\
    -\ln\varepsilon, & \hbox{$n=4k$.}
       \end{array}
     \right.
\end{split}
\end{equation}
\end{lemma}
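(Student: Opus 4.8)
The idea is to reduce each of the three quantities to a standard Aubin--Talenti integral on $\mathbb{R}^{n}$ and then to rescale by $x=\sqrt{\varepsilon}\,y$. The key structural input is the factorization (\ref{5.5}) together with its consequence (\ref{5.6}): on $\{|x|<\delta\}$ one has $(\tfrac{1-|x|^{2}}{2})^{k-\frac{n}{2}}f_{k,\varepsilon}=(\tfrac{2}{\varepsilon+|x|^{2}})^{\frac{n-2k}{2}}-(\tfrac{2}{\delta^{2}+\varepsilon})^{\frac{n-2k}{2}}\sum_{j=0}^{k-1}\tfrac{\Gamma(j+\frac{n-2k}{2})}{\Gamma(j+1)\Gamma(\frac{n-2k}{2})}(\tfrac{1-|x|^{2}}{\delta^{2}+\varepsilon})^{j}$, and the subtracted term is a polynomial in $|x|^{2}$ of degree $k-1$ (hence of degree at most $2k-2<2k$ in $x$), so $(-\Delta)^{k}$ annihilates it and $P_{k}f_{k,\varepsilon}=P_{k}u_{k,\varepsilon}=\varepsilon^{k}\frac{\Gamma(\frac{n+2k}{2})}{\Gamma(\frac{n-2k}{2})}u_{k,\varepsilon}^{\frac{n+2k}{n-2k}}$ on $\{|x|<\delta\}$, while $f_{k,\varepsilon}\equiv 0$ on $\{|x|\ge\delta\}$. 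Writing $h_{k,\varepsilon}:=u_{k,\varepsilon}-f_{k,\varepsilon}$ on $\{|x|<\delta\}$ for this correction, I will repeatedly use that $h_{k,\varepsilon}\ge 0$ there (all the $\Gamma$-coefficients are positive), so $f_{k,\varepsilon}\le u_{k,\varepsilon}$, and that $h_{k,\varepsilon}$ and the volume density $(\tfrac{2}{1-|x|^{2}})^{n}$ stay bounded on $\{|x|<\delta\}$ uniformly for small $\varepsilon$.

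For the first estimate, substitute (\ref{5.6}) and use $u_{k,\varepsilon}^{\frac{n+2k}{n-2k}}\,dV=(\tfrac{2}{1-|x|^{2}})^{\frac{n-2k}{2}}(\tfrac{2}{\varepsilon+|x|^{2}})^{\frac{n+2k}{2}}\,dx$ and $u_{k,\varepsilon}^{\frac{2n}{n-2k}}\,dV=(\tfrac{2}{\varepsilon+|x|^{2}})^{n}\,dx$; since $f_{k,\varepsilon}\le u_{k,\varepsilon}$ and $u_{k,\varepsilon}^{\frac{n+2k}{n-2k}}\ge 0$, this bounds $\int_{\mathbb{B}^{n}}(P_{k}f_{k,\varepsilon})f_{k,\varepsilon}\,dV$ by $\varepsilon^{k}\frac{\Gamma(\frac{n+2k}{2})}{\Gamma(\frac{n-2k}{2})}\int_{|x|<\delta}(\tfrac{2}{\varepsilon+|x|^{2}})^{n}\,dx$, and enlarging the domain to $\mathbb{R}^{n}$ and rescaling produces the factor $\varepsilon^{k-n/2}\int_{\mathbb{R}^{n}}(\tfrac{2}{1+|x|^{2}})^{n}\,dx$. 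For the second estimate I argue from below: discard the contribution of the region $\{u_{k,\varepsilon}<h_{k,\varepsilon}\}$, which is harmless since there $u_{k,\varepsilon}$ is bounded (as $h_{k,\varepsilon}=O(1)$), so this region changes both $\int|f_{k,\varepsilon}|^{\frac{2n}{n-2k}}\,dV$ and $\int u_{k,\varepsilon}^{\frac{2n}{n-2k}}\,dV$ by only $O(1)$; on the complement apply the convexity inequality $(a-b)^{p}\ge a^{p}-p\,a^{p-1}b$ with $p=\frac{2n}{n-2k}$, $a=u_{k,\varepsilon}$, $b=h_{k,\varepsilon}$. The main term is again $\int_{|x|<\delta}(\tfrac{2}{\varepsilon+|x|^{2}})^{n}\,dx=\varepsilon^{-n/2}\int_{\mathbb{R}^{n}}(\tfrac{2}{1+|x|^{2}})^{n}\,dx+O(1)$, while the correction $p\int_{|x|<\delta}u_{k,\varepsilon}^{\frac{n+2k}{n-2k}}h_{k,\varepsilon}\,dV\lesssim\int_{|x|<\delta}(\tfrac{2}{\varepsilon+|x|^{2}})^{\frac{n+2k}{2}}\,dx=O(\varepsilon^{-k})$ after rescaling, using $\int_{\mathbb{R}^{n}}(1+|x|^{2})^{-\frac{n+2k}{2}}\,dx<\infty$. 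Since $\varepsilon^{-k}=\varepsilon^{-n/2}\varepsilon^{\frac{n-2k}{2}}$, collecting terms gives $\varepsilon^{-n/2}[\int_{\mathbb{R}^{n}}(\tfrac{2}{1+|x|^{2}})^{n}\,dx+O(\varepsilon^{\frac{n-2k}{2}})]$.

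For the last estimate, expand $f_{k,\varepsilon}^{2}=u_{k,\varepsilon}^{2}-2u_{k,\varepsilon}h_{k,\varepsilon}+h_{k,\varepsilon}^{2}$ on $\{|x|<\delta\}$. With $u_{k,\varepsilon}^{2}\,dV=(\tfrac{2}{1-|x|^{2}})^{2k}(\tfrac{2}{\varepsilon+|x|^{2}})^{n-2k}\,dx$ and $(\tfrac{2}{1-|x|^{2}})^{2k}$ bounded above and below by positive constants on $\{|x|<\delta\}$, the main term $\int_{|x|<\delta}u_{k,\varepsilon}^{2}\,dV$ is comparable, after rescaling, to $\varepsilon^{2k-n/2}\int_{|y|<\delta/\sqrt{\varepsilon}}(\tfrac{2}{1+|y|^{2}})^{n-2k}\,dy$. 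The integrand decays like $|y|^{-2(n-2k)}$, so the $y$-integral converges to a positive limit when $2(n-2k)>n$, i.e.\ $n>4k$, and is comparable to $\int_{1}^{\delta/\sqrt{\varepsilon}}r^{-1}\,dr\sim-\tfrac12\log\varepsilon$ when $n=4k$ (where $\varepsilon^{2k-n/2}=1$); this yields the two regimes $\varepsilon^{2k-n/2}$ and $-\log\varepsilon$. It remains to check that the other two terms are of strictly lower order: $\int_{|x|<\delta}h_{k,\varepsilon}^{2}\,dV=O(1)$ since $h_{k,\varepsilon}$ and $dV$ are bounded there, while $\int_{|x|<\delta}u_{k,\varepsilon}h_{k,\varepsilon}\,dV\lesssim\int_{|x|<\delta}(\tfrac{2}{\varepsilon+|x|^{2}})^{\frac{n-2k}{2}}\,dx=O(1)$, because the rescaled integral $\int_{|y|<\delta/\sqrt{\varepsilon}}(1+|y|^{2})^{-\frac{n-2k}{2}}\,dy$ grows like $(\delta/\sqrt{\varepsilon})^{2k}$, which exactly cancels the prefactor $\varepsilon^{k}$. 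As the main term tends to $+\infty$ in both regimes, these $O(1)$ errors are absorbed.

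All of these computations are elementary and of Talenti-bubble type; the two places that need genuine care are: (i) verifying that the term subtracted in (\ref{5.3}) is a polynomial of degree $k-1$ in $|x|^{2}$, which is exactly what makes (\ref{5.6}) hold with the sharp power of $\varepsilon$; and (ii) the bookkeeping of the competing scales --- $\varepsilon^{2k-n/2}$ (resp.\ $\log(1/\varepsilon)$) for the quadratic term, $\varepsilon^{-n/2}$ for the $L^{\frac{2n}{n-2k}}$ term and $\varepsilon^{k-n/2}$ for the energy --- together with the $O(1)$ errors coming from truncating $\mathbb{R}^{n}$ to $\{|x|<\delta\}$ and from the correction $h_{k,\varepsilon}$. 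The dichotomy $n>4k$ versus $n=4k$ enters the argument only through the convergence of $\int_{\mathbb{R}^{n}}(1+|y|^{2})^{-(n-2k)}\,dy$.
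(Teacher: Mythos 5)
Your proposal follows the paper's argument essentially step for step: same decomposition $f_{k,\varepsilon}=u_{k,\varepsilon}-h_{k,\varepsilon}$, same use of $0\le f_{k,\varepsilon}\le u_{k,\varepsilon}$ and $P_{k}f_{k,\varepsilon}=P_{k}u_{k,\varepsilon}$ on $B_{\delta}$, same convexity inequality $(a-b)^{p}\ge a^{p}-pa^{p-1}b$, same rescaling $x=\sqrt{\varepsilon}\,y$, and the same $n>4k$ versus $n=4k$ dichotomy via convergence of $\int(1+|y|^{2})^{-(n-2k)}\,dy$. One small point worth noting: your bookkeeping for the cross term $\int_{B_{\delta}}u_{k,\varepsilon}h_{k,\varepsilon}\,dV$ is actually sharper (and more correct) than the paper's -- the rescaled integral $\int_{|y|<\delta/\sqrt{\varepsilon}}(1+|y|^{2})^{-\frac{n-2k}{2}}\,dy$ really does grow like $\varepsilon^{-k}$ and cancels the $\varepsilon^{k}$ prefactor, so the cross term is $O(1)$, whereas the paper asserts $O(\varepsilon^{k})$; this discrepancy does not affect the final two-regime conclusion since $\varepsilon^{2k-n/2}$ (resp.\ $-\ln\varepsilon$) dominates both.
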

\begin{proof} Set $B_{\delta}=\{x\in\mathbb{B}^{n}: |x|<\delta\}$. We have, by (\ref{5.3}) and (\ref{5.6}),
\begin{equation}\label{5.7}
\begin{split}
\int_{\mathbb{B}^{n}}(P_{k}f_{k,\varepsilon})f_{k,\varepsilon}dV=&\varepsilon^{k}\frac{\Gamma(\frac{n+2k}{2})}{\Gamma(\frac{n-2k}{2})}\int_{B_{\delta}}
u^{\frac{n+2k}{n-2k}}_{k,\varepsilon}f_{k,\varepsilon}dV\leq \varepsilon^{k}\frac{\Gamma(\frac{n+2k}{2})}{\Gamma(\frac{n-2k}{2})}\int_{B_{\delta}}
u^{\frac{2n}{n-2k}}_{k,\varepsilon}dV\\
=&\varepsilon^{k}
\frac{\Gamma(\frac{n+2k}{2})}{\Gamma(\frac{n-2k}{2})}
\int_{B_{\delta}}\left(\frac{2}{\varepsilon+|x|^{2}}\right)^{n}dx\\
=&\varepsilon^{k-n/2}
\frac{\Gamma(\frac{n+2k}{2})}{\Gamma(\frac{n-2k}{2})}\int_{B_{\delta/\varepsilon}}
\left(\frac{2}{1+|x|^{2}}\right)^{n}dx\\
\leq& \varepsilon^{k-n/2}
\frac{\Gamma(\frac{n+2k}{2})}{\Gamma(\frac{n-2k}{2})}\int_{\mathbb{R}^{n}}
\left(\frac{2}{1+|x|^{2}}\right)^{n}dx.
\end{split}
\end{equation}

Next we prove the second inequality. Using the  inequality
\[(s-t)^{\alpha}\geq s^{\alpha}-\alpha s^{\alpha-1}t,\;\;s\geq t\geq0,\;\;\alpha\geq1,\]
we have
\begin{equation*}
\begin{split}
&\int_{\mathbb{B}^{n}}|f_{k,\varepsilon}|^{\frac{2n}{n-2k}}dV\\
=&
\int_{B_{\delta}}
\left( u_{k,\varepsilon}-\left(\frac{1-|x|^{2}}{2}\right)^{\frac{n-2k}{2}}
\left(\frac{2}{\delta^{2}+\varepsilon}\right)^{\frac{n-2k}{2}}
\sum^{k-1}_{j=0}
\frac{\Gamma(j+\frac{n-2k}{2})}{\Gamma(j+1)\Gamma(\frac{n-2k}{2})}
\left(\frac{1-|x|^{2}}{\delta^{2}+\varepsilon}\right)^{j}\right)^{\frac{2n}{n-2k}}dV\\
\geq&\int_{B_{\delta}} u_{k,\varepsilon}^{\frac{2n}{n-2k}}dV-\frac{2n}{n-2k}\int_{B_{\delta}} u_{k,\varepsilon}^{\frac{n+2k}{n-2k}}\cdot\\
&\left(\frac{1-|x|^{2}}{2}\right)^{\frac{n-2k}{2}}
\left(\frac{2}{\delta^{2}+\varepsilon}\right)^{\frac{n-2k}{2}}
\sum^{k-1}_{j=0}
\frac{\Gamma(j+\frac{n-2k}{2})}{\Gamma(j+1)\Gamma(\frac{n-2k}{2})}
\left(\frac{1-|x|^{2}}{\delta^{2}+\varepsilon}\right)^{j}dV\\
=&\int_{B_{\delta}}\left(\frac{2}{\varepsilon+|x|^{2}}\right)^{n}dx-\frac{2n}{n-2k}\cdot\\
&\sum^{k-1}_{j=0}
\frac{\Gamma(j+\frac{n-2k}{2})}{\Gamma(j+1)\Gamma(\frac{n-2k}{2})}\int_{B_{\delta}}\left(\frac{2}{\varepsilon+|x|^{2}}\right)^{\frac{n+2k}{2}}
\left(\frac{2}{\delta^{2}+\varepsilon}\right)^{\frac{n-2k}{2}}
\left(\frac{1-|x|^{2}}{\delta^{2}+\varepsilon}\right)^{j}dx.
\end{split}
\end{equation*}
Easy computations lead to
\begin{equation*}
\begin{split}
\int_{B_{\delta}}\left(\frac{2}{\varepsilon+|x|^{2}}\right)^{n}=&\varepsilon^{-n/2}
\left[\int_{\mathbb{R}^{n}}
\left(\frac{2}{1+|x|^{2}}\right)^{n}dx+O(\varepsilon^{\frac{n}{2}})\right];\\
\int_{B_{\delta}}\left(\frac{2}{\varepsilon+|x|^{2}}\right)^{\frac{n+2k}{2}}&
\left(\frac{2}{\delta^{2}+\varepsilon}\right)^{\frac{n-2k}{2}}
\left(\frac{1-|x|^{2}}{\delta^{2}+\varepsilon}\right)^{j}dx=O(\varepsilon^{-k}),\;\;j=0,1,\cdots,k-1.
\end{split}
\end{equation*}
Therefore, we have
\begin{equation*}
\begin{split}
&\int_{\mathbb{B}^{n}}|f_{k,\varepsilon}|^{\frac{2n}{n-2k}}dV=\varepsilon^{-n/2}
\left[\int_{\mathbb{R}^{n}}
\left(\frac{2}{1+|x|^{2}}\right)^{n}dx+O(\varepsilon^{\frac{n-2k}{2}})\right]
\end{split}
\end{equation*}

Finally,  we prove the third inequality. We compute
\begin{equation}\label{5.9}
\begin{split}
\int_{\mathbb{B}^{n}}f_{k,\varepsilon}^{2}dV=&\int_{B_{\delta}}u_{k,\varepsilon}^{2}dV-2\int_{B_{\delta}}u_{k,\varepsilon}
\left(\frac{1-|x|^{2}}{2}\right)^{\frac{n-2k}{2}}\cdot\\
&\left(\frac{2}{\delta^{2}+\varepsilon}\right)^{\frac{n-2k}{2}}
\sum\limits^{k-1}_{j=0}
\frac{\Gamma(j+\frac{n-2k}{2})}{\Gamma(j+1)\Gamma(\frac{n-2k}{2})}
\left(\frac{1-|x|^{2}}{\delta^{2}+\varepsilon}\right)^{j}dV+\\
&\int_{B_{\delta}}\left|
\left(\frac{1-|x|^{2}}{2}\right)^{\frac{n-2k}{2}}\left(\frac{2}{\delta^{2}+\varepsilon}\right)^{\frac{n-2k}{2}}
\sum\limits^{k-1}_{j=0}
\frac{\Gamma(j+\frac{n-2k}{2})}{\Gamma(j+1)\Gamma(\frac{n-2k}{2})}
\left(\frac{1-|x|^{2}}{\delta^{2}+\varepsilon}\right)^{j}\right|^{2}dV.
\end{split}
\end{equation}
Simple calculations lead to
\begin{equation*}
\begin{split}
\int_{\mathbb{B}^{n}}u_{k,\varepsilon}^{2}dV=&
\int_{B_{\delta}}\left(\frac{2}{\varepsilon+|x|^{2}}\right)^{n-2k}\left(\frac{2}{1-|x|^{2}}\right)^{2k}dx\\
\thicksim&\left\{
       \begin{array}{ll}
         \varepsilon^{2k-\frac{n}{2}}, & \hbox{$n>4k$;} \\
    -\ln\varepsilon, & \hbox{$n=4k$,}
       \end{array}
     \right.\\
\int_{B_{\delta}}u_{k,\varepsilon}
\left(\frac{1-|x|^{2}}{2}\right)^{\frac{n-2k}{2}}
&\left(\frac{2}{\delta^{2}+\varepsilon}\right)^{\frac{n-2k}{2}}
\left(\frac{1-|x|^{2}}{\delta^{2}+\varepsilon}\right)^{j}dV=O(\varepsilon^{k}),\;\;k=0,1,\cdots,k-1,\\
\int_{B_{\delta}}\left|
\left(\frac{1-|x|^{2}}{2}\right)^{\frac{n-2k}{2}}\right.&\left.\left(\frac{2}{\delta^{2}+\varepsilon}\right)^{\frac{n-2k}{2}}
\sum\limits^{k-1}_{j=0}
\frac{\Gamma(j+\frac{n-2k}{2})}{\Gamma(j+1)\Gamma(\frac{n-2k}{2})}
\left(\frac{1-|x|^{2}}{\delta^{2}+\varepsilon}\right)^{j}\right|^{2}dV\thicksim1.
\end{split}
\end{equation*}
Therefore, we have, by (\ref{5.9}),
\begin{equation*}
\begin{split}
\int_{\mathbb{B}^{n}}f_{k,\varepsilon}^{2}dV
\thicksim&\left\{
       \begin{array}{ll}
         \varepsilon^{2k-\frac{n}{2}}, & \hbox{$n>4k$;} \\
    -\ln\varepsilon, & \hbox{$n=4k$.}
       \end{array}
     \right.
\end{split}
\end{equation*}
\end{proof}

\begin{lemma} \label{lm5.2}
Let $2\leq k\leq\frac{n}{4}$.  Suppose that there exists $\lambda\in\mathbb{R}$ such that for
any $u\in C^{\infty}_{0}(\mathbb{B}^{n})$,
\begin{equation}\label{b5.10}
\begin{split}
\int_{\mathbb{B}^{n}}(P_{k}u)udV+\lambda \int_{\mathbb{B}^{n}}u^{2}dV\geq S_{n,k}
\left(\int_{\mathbb{B}^{n}}|u|^{\frac{2n}{n-2k}}dV\right)^{\frac{n-2k}{n}},
\end{split}
\end{equation}
where $S_{n,k}$ is the best $k$-th order Sobolev constant. Then $\lambda\geq0$.
\end{lemma}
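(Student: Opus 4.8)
The plan is to test the hypothesized inequality (\ref{b5.10}) against the family $u=f_{k,\varepsilon}$ from (\ref{5.3}) (understood, if needed, as approximated by $C^{\infty}_{0}$ functions) and let $\varepsilon\to0^{+}$, feeding in the three asymptotic estimates of Lemma \ref{lm5.1}. Write $I=\int_{\mathbb{R}^{n}}\left(\frac{2}{1+|x|^{2}}\right)^{n}dx$. Since $\left(\frac{2}{1+|x|^{2}}\right)^{\frac{n-2k}{2}}$ is an extremal (up to scaling) for the sharp $k$-th order Euclidean Sobolev inequality, (\ref{5.4}) with $\varepsilon=1$ and the definition of $S_{n,k}$ give the identity
\begin{equation*}
S_{n,k}\,I^{\frac{n-2k}{n}}=\frac{\Gamma\left(\frac{n+2k}{2}\right)}{\Gamma\left(\frac{n-2k}{2}\right)}\,I .
\end{equation*}

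Next I would substitute $u=f_{k,\varepsilon}$ into (\ref{b5.10}), replace $\int_{\mathbb{B}^{n}}(P_{k}f_{k,\varepsilon})f_{k,\varepsilon}\,dV$ by its upper bound from Lemma \ref{lm5.1} on the left, and replace $\int_{\mathbb{B}^{n}}|f_{k,\varepsilon}|^{\frac{2n}{n-2k}}\,dV$ by its lower bound on the right (legitimate, since $t\mapsto t^{\frac{n-2k}{n}}$ is increasing). This gives
\begin{equation*}
\varepsilon^{k-\frac{n}{2}}\frac{\Gamma\left(\frac{n+2k}{2}\right)}{\Gamma\left(\frac{n-2k}{2}\right)}\,I+\lambda\int_{\mathbb{B}^{n}}f_{k,\varepsilon}^{2}\,dV\;\geq\;S_{n,k}\left(\varepsilon^{-\frac{n}{2}}\left[I+O\left(\varepsilon^{\frac{n-2k}{2}}\right)\right]\right)^{\frac{n-2k}{n}} .
\end{equation*}
Because $I>0$ is fixed and $(1+t)^{\frac{n-2k}{n}}=1+O(t)$ as $t\to0$, the right-hand side equals $\varepsilon^{-\frac{n-2k}{2}}S_{n,k}I^{\frac{n-2k}{n}}+O(1)$; by the identity above and $\varepsilon^{k-\frac{n}{2}}=\varepsilon^{-\frac{n-2k}{2}}$ this is exactly $\varepsilon^{k-\frac{n}{2}}\frac{\Gamma\left(\frac{n+2k}{2}\right)}{\Gamma\left(\frac{n-2k}{2}\right)}I+O(1)$. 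The leading $\varepsilon^{k-n/2}$ terms thus cancel, leaving a constant $C$ independent of $\varepsilon$ with $\lambda\int_{\mathbb{B}^{n}}f_{k,\varepsilon}^{2}\,dV\geq-C$ for all small $\varepsilon>0$.

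To finish I would use the third estimate of Lemma \ref{lm5.1}: since $k\leq n/4$, the integral $\int_{\mathbb{B}^{n}}f_{k,\varepsilon}^{2}\,dV$ diverges to $+\infty$ as $\varepsilon\to0^{+}$ (like $\varepsilon^{2k-n/2}$ if $n>4k$, like $-\ln\varepsilon$ if $n=4k$). If $\lambda<0$, then $\lambda\int_{\mathbb{B}^{n}}f_{k,\varepsilon}^{2}\,dV\to-\infty$, contradicting the uniform lower bound; hence $\lambda\geq0$.

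The one real point — already carried out in Lemma \ref{lm5.1} — is securing the exact cancellation of the $\varepsilon^{k-n/2}$-order terms. This is exactly why $f_{k,\varepsilon}$ is built from $u_{k,\varepsilon}$ by subtracting the correction displayed in (\ref{5.3}): that correction lies in the kernel of $P_{k}$ (so (\ref{5.6}) holds and $\int_{\mathbb{B}^{n}}(P_{k}f_{k,\varepsilon})f_{k,\varepsilon}\,dV$ has the clean leading term) and it trims the error in $\int_{\mathbb{B}^{n}}|f_{k,\varepsilon}|^{\frac{2n}{n-2k}}\,dV$ down to $O\left(\varepsilon^{\frac{n-2k}{2}}\right)$, negligible against the blow-up of $\int_{\mathbb{B}^{n}}f_{k,\varepsilon}^{2}\,dV$, without destroying that blow-up. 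The hypothesis $k\leq n/4$ is sharp here: when $n<4k$ this $L^{2}$-integral stays bounded and the argument collapses, consistent with the strictly negative lower bound for $\lambda$ in part (2) of Theorem \ref{th1.2}.
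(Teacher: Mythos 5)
Your proof is correct and follows essentially the same route as the paper's: test against $f_{k,\varepsilon}$, invoke the three asymptotics of Lemma \ref{lm5.1}, observe the exact cancellation of the $\varepsilon^{k-n/2}$ leading terms (via $S_{n,k}\,\omega_n^{(n-2k)/n}=\tfrac{\Gamma((n+2k)/2)}{\Gamma((n-2k)/2)}\,\omega_n$), and then let the unboundedness of $\int f_{k,\varepsilon}^2\,dV$ force $\lambda\geq 0$. The only cosmetic difference is that the paper normalizes by dividing to obtain an expansion of the Rayleigh quotient around $S_{n,k}$ and compares the orders $\varepsilon^k$ versus $\varepsilon^{(n-2k)/2}$, whereas you keep the inequality in un-normalized form and let $\int f_{k,\varepsilon}^2\,dV\to\infty$ do that comparison for you — the same mechanism in slightly different bookkeeping.
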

\begin{proof}
 By Lemma \ref{lm5.1}, if $n>4k$, then for $\varepsilon>0$ small enough, we have, for some $C_{1}>0$,
\begin{equation}\label{5.10}
\begin{split}
&\frac{\int_{\mathbb{B}^{n}}P_{k}f_{k,\varepsilon} f_{k,\varepsilon}dV+\lambda\int_{\mathbb{B}^{n}}f^{2}_{k,\varepsilon} dV }{\left(\int_{\mathbb{B}^{n}}|f_{k,\varepsilon}|^{\frac{2n}{n-2k}}dV\right)^{\frac{n-2k}{n}}}
\leq
\frac{\frac{\Gamma(\frac{n+2k}{2})}{\Gamma(\frac{n-2k}{2})}\int_{\mathbb{R}^{n}}
\left(\frac{2}{1+|x|^{2}}\right)^{n}dx+C_{1}\lambda
\varepsilon^{k}}{
\left[\int_{\mathbb{R}^{n}}
\left(\frac{2}{1+|x|^{2}}\right)^{n}dx+O(\varepsilon^{\frac{n-2k}{2}})\right]^{\frac{n-2k}{n}}}\\
=&\frac{\Gamma(\frac{n+2k}{2})}{\Gamma(\frac{n-2k}{2})}
\left[\int_{\mathbb{R}^{n}}
\left(\frac{2}{1+|x|^{2}}\right)^{n}dx\right]^{\frac{2k}{n}}+
O(\varepsilon^{\frac{n-2k}{2}})+\frac{C_{1}\lambda\varepsilon^{k}}{
\left[\int_{\mathbb{R}^{n}}
\left(\frac{2}{1+|x|^{2}}\right)^{n}dx\right]^{\frac{n-2k}{n}}}+O(\varepsilon^{\frac{n}{2}})\\
=&S_{n,k}+\frac{C_{1}\lambda\varepsilon^{k}}{
\left[\int_{\mathbb{R}^{n}}
\left(\frac{2}{1+|x|^{2}}\right)^{n}dx\right]^{\frac{n-2k}{n}}}+O(\varepsilon^{\frac{n-2k}{2}}).
\end{split}
\end{equation}
To get the last equality, we use the fact
$$
\int_{\mathbb{R}^{n}}
\left(\frac{2}{1+|x|^{2}}\right)^{n}dx=2^{n}\omega_{n-1}\int^{\infty}_{0}\frac{r^{n-1}}{(1+r^{2})^{n}}dr=2^{n-1}\omega_{n-1}\int^{\infty}_{0}
\frac{t^{\frac{n}{2}-1}}{(1+t)^{n}}dt=\omega_{n}.
$$
Since (\ref{b5.10}) implies
\begin{equation*}
\begin{split}
&\frac{\int_{\mathbb{B}^{n}}P_{k}f_{k,\varepsilon} f_{k,\varepsilon}dV+\lambda\int_{\mathbb{B}^{n}}f^{2}_{k,\varepsilon} dV }{\left(\int_{\mathbb{B}^{n}}|f_{k,\varepsilon}|^{\frac{2n}{n-2k}}dV\right)^{\frac{n-2k}{n}}}
\geq S_{n,k},
\end{split}
\end{equation*}
we  have, by (\ref{5.10}),  $\lambda\geq0$.

Similarly, in the case $n=4k$, we have, for some $C_{2}>0$,
\begin{equation*}
\begin{split}
S_{n,k}\leq&\frac{\int_{\mathbb{B}^{n}}P_{k}f_{k,\varepsilon} f_{k,\varepsilon}dV+\lambda\int_{\mathbb{B}^{n}}f^{2}_{k,\varepsilon} dV }{\left(\int_{\mathbb{B}^{n}}|f_{k,\varepsilon}|^{\frac{2n}{n-2k}}dV\right)^{\frac{n-2k}{n}}}
\leq
\frac{\frac{\Gamma(\frac{n+2k}{2})}{\Gamma(\frac{n-2k}{2})}\int_{\mathbb{R}^{n}}
\left(\frac{2}{1+|x|^{2}}\right)^{n}dx-C_{2}\lambda
\varepsilon^{k}\ln\varepsilon}{
\left[\int_{\mathbb{R}^{n}}
\left(\frac{2}{1+|x|^{2}}\right)^{n}dx+O(\varepsilon^{k})\right]^{\frac{n-2k}{n}}}
\\
\leq&\frac{\Gamma(\frac{n+2k}{2})}{\Gamma(\frac{n-2k}{2})}
\left[\int_{\mathbb{R}^{n}}
\left(\frac{2}{1+|x|^{2}}\right)^{n}dx\right]^{\frac{2k}{n}}+
O(\varepsilon^{k})-\frac{C_{2}\lambda\varepsilon^{k}\ln\varepsilon}{
\left[\int_{\mathbb{R}^{n}}
\left(\frac{2}{1+|x|^{2}}\right)^{n}dx\right]^{\frac{n-2k}{n}}}+O(\varepsilon^{2k}\ln\varepsilon)\\
=&S_{n,k}-\frac{C_{2}\lambda\varepsilon^{k}\ln\varepsilon}{
\left[\int_{\mathbb{R}^{n}}
\left(\frac{2}{1+|x|^{2}}\right)^{n}dx\right]^{\frac{n-2k}{n}}}+O(\varepsilon^{k}).
\end{split}
\end{equation*}
Therefore, we must also have $\lambda\geq 0$.
\end{proof}

\subsection{Case II: $2k+2\leq n<4k$} We choose $\delta=1$ in (\ref{5.2}) and set
\begin{equation}\label{5.12}
\begin{split}
g_{k,\varepsilon}=u_{k,\varepsilon}-\left(\frac{1-|x|^{2}}{2}\right)^{\frac{n-2k}{2}}\left(\frac{2}{1+\varepsilon}\right)^{\frac{n-2k}{2}}\sum^{k-1}_{j=0}
\frac{\Gamma(j+\frac{n-2k}{2})}{\Gamma(j+1)\Gamma(\frac{n-2k}{2})}
\left(\frac{1-|x|^{2}}{1+\varepsilon}\right)^{j}.
\end{split}
\end{equation}
Then
\begin{equation}\label{5.13}
\begin{split}
P_{k}g_{k,\varepsilon}=&P_{k}u_{k,\varepsilon}=\varepsilon^{k}\frac{\Gamma(\frac{n+2k}{2})}{\Gamma(\frac{n-2k}{2})}
u^{\frac{n+2k}{n-2k}}_{k,\varepsilon}
\end{split}
\end{equation}
and
\begin{equation}\label{5.15}
\begin{split}
\int_{\mathbb{B}^{n}}(P_{k}g_{k,\varepsilon})g_{k,\varepsilon}dV\leq& \varepsilon^{k}\frac{\Gamma(\frac{n+2k}{2})}{\Gamma(\frac{n-2k}{2})}\int_{B_{\delta}}
u^{\frac{2n}{n-2k}}_{k,\varepsilon}dV\\
=&\varepsilon^{k}
\frac{\Gamma(\frac{n+2k}{2})}{\Gamma(\frac{n-2k}{2})}
\int_{B_{\delta}}\left(\frac{2}{\varepsilon+|x|^{2}}\right)^{n}dx\\
\leq& \varepsilon^{k-n/2}
\frac{\Gamma(\frac{n+2k}{2})}{\Gamma(\frac{n-2k}{2})}\int_{\mathbb{R}^{n}}
\left(\frac{2}{1+|x|^{2}}\right)^{n}dx.
\end{split}
\end{equation}
Easy calculations lead to
\begin{equation}\label{5.16}
\begin{split}
&\int_{\mathbb{B}^{n}}|g_{k,\varepsilon})|^{2}dV\\
=&\int_{\mathbb{B}^{n}}\left|\left(\frac{2}{\varepsilon+|x|^{2}}\right)^{\frac{n-2k}{2}}-\left(\frac{2}{1+\varepsilon}\right)^{\frac{n-2k}{2}}\sum^{k-1}_{j=0}
\frac{\Gamma(j+\frac{n-2k}{2})}{\Gamma(j+1)\Gamma(\frac{n-2k}{2})}
\left(\frac{1-|x|^{2}}{1+\varepsilon}\right)^{j}\right|^{2}\frac{2^{2k}}{(1-|x|^{2})^{2k}}dx\\
=&2^{n}\int_{\mathbb{B}^{n}}\left|\frac{1}{|x|^{n-2k}}-\sum^{k-1}_{j=0}
\frac{\Gamma(j+\frac{n-2k}{2})}{\Gamma(j+1)\Gamma(\frac{n-2k}{2})}
\left(1-|x|^{2}\right)^{j}\right|^{2}\frac{1}{(1-|x|^{2})^{2k}}dx+o(1).
\end{split}
\end{equation}
\begin{lemma} \label{lm5.3}For $\varepsilon>0$ small enough, we have
\begin{equation*}
\begin{split}
\left(\int_{\mathbb{B}^{n}}|g_{k,\varepsilon}|^{\frac{2n}{n-2k}}dx\right)^{\frac{n-2k}{n}}
&\geq\varepsilon^{k-\frac{n}{2}}\left(\int_{\mathbb{R}^{n}}\frac{1}{(1+|x|^{2})^{n}}dx\right)^{\frac{n-2k}{n}}\cdot\\
&\left[1-
\varepsilon^{\frac{n-2k}{2}}2^{\frac{n}{2}-k+1}\sum^{k-1}_{j=0}
\frac{\Gamma(j+\frac{n-2k}{2})}{\Gamma(j+1)\Gamma(\frac{n-2k}{2})}\frac{\int_{\mathbb{R}^{n}}\frac{1}{(1+|x|^{2})^{\frac{n+2k}{2}}}
dx}{\int_{\mathbb{R}^{n}}\frac{1}{(1+|x|^{2})^{n}}dx}+O(\varepsilon^{\frac{n}{2}})\right].
\end{split}
\end{equation*}
\end{lemma}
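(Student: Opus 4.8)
The plan is to run the truncation‑and‑rescaling argument used in the proof of Lemma \ref{lm5.1}. The first step is to record that $g_{k,\varepsilon}$ is nonnegative and pointwise dominated by $u_{k,\varepsilon}$. On $\mathbb{B}^{n}$ put $w=\frac{1-|x|^{2}}{1+\varepsilon}\in(0,1)$; the Taylor expansion (\ref{5.2}) with $\delta=1$ reads $u_{k,\varepsilon}=\sum_{j\geq0}c_{j}\,w^{\,j+\frac{n-2k}{2}}$, where $c_{j}=\frac{\Gamma(j+\frac{n-2k}{2})}{\Gamma(j+1)\Gamma(\frac{n-2k}{2})}>0$, and the quantity subtracted from $u_{k,\varepsilon}$ in (\ref{5.12}) is exactly the partial sum $\sum_{j=0}^{k-1}c_{j}\,w^{\,j+\frac{n-2k}{2}}$. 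Hence $g_{k,\varepsilon}=\sum_{j\geq k}c_{j}\,w^{\,j+\frac{n-2k}{2}}\geq0$, and the function $h_{k,\varepsilon}:=u_{k,\varepsilon}-g_{k,\varepsilon}=\sum_{j=0}^{k-1}c_{j}\,w^{\,j+\frac{n-2k}{2}}$ satisfies $0\leq h_{k,\varepsilon}\leq u_{k,\varepsilon}$ on $\mathbb{B}^{n}$.

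Next I would apply the elementary convexity inequality $(s-t)^{\alpha}\geq s^{\alpha}-\alpha s^{\alpha-1}t$, valid for $s\geq t\geq0$ and $\alpha\geq1$, with $\alpha=\frac{2n}{n-2k}$ (which is $>2$ in the range $2k+2\le n<4k$), $s=u_{k,\varepsilon}$ and $t=h_{k,\varepsilon}$, and integrate to obtain
\begin{equation*}
\int_{\mathbb{B}^{n}}|g_{k,\varepsilon}|^{\frac{2n}{n-2k}}\;\geq\;\int_{\mathbb{B}^{n}}u_{k,\varepsilon}^{\frac{2n}{n-2k}}\;-\;\frac{2n}{n-2k}\int_{\mathbb{B}^{n}}u_{k,\varepsilon}^{\frac{n+2k}{n-2k}}\,h_{k,\varepsilon}.
\end{equation*}
The first term on the right is the standard bubble integral already computed for $u_{k,\varepsilon}$: after $x=\sqrt{\varepsilon}\,y$ it equals $\varepsilon^{-n/2}\int_{\mathbb{R}^{n}}(1+|y|^{2})^{-n}\,dy$ (up to the fixed power of $2$ carried by the weight) with relative error $O(\varepsilon^{n/2})$ coming from the truncation $|y|<\varepsilon^{-1/2}$. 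For the second term, since $h_{k,\varepsilon}$ is the explicit sum of $k$ pieces above, $u_{k,\varepsilon}^{(n+2k)/(n-2k)}h_{k,\varepsilon}$ splits accordingly; bounding $1-|x|^{2}\leq1$ and $1+\varepsilon\geq1$ and extending the domain from $\mathbb{B}^{n}$ to $\mathbb{R}^{n}$, each piece is dominated by a constant times $c_{j}(\varepsilon+|x|^{2})^{-(n+2k)/2}$, so that after rescaling
\[
\int_{\mathbb{B}^{n}}u_{k,\varepsilon}^{\frac{n+2k}{n-2k}}\,h_{k,\varepsilon}\;\leq\;C\,\varepsilon^{-k}\Big(\textstyle\sum_{j=0}^{k-1}c_{j}\Big)\int_{\mathbb{R}^{n}}(1+|y|^{2})^{-\frac{n+2k}{2}}\,dy ,
\]
all $\mathbb{R}^{n}$–integrals being finite because $\frac{n+2k}{2}>\frac{n}{2}$ and $n-2k\geq2$.

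To finish, set $A=\int_{\mathbb{B}^{n}}u_{k,\varepsilon}^{2n/(n-2k)}$ and $B=\frac{2n}{n-2k}\int_{\mathbb{B}^{n}}u_{k,\varepsilon}^{(n+2k)/(n-2k)}h_{k,\varepsilon}$, so that $\int_{\mathbb{B}^{n}}|g_{k,\varepsilon}|^{2n/(n-2k)}\geq A-B$ with $B/A=O(\varepsilon^{\frac{n}{2}-k})$. Since the exponent $\beta:=\frac{n-2k}{n}$ satisfies $0<\beta<1$ and $0\leq B\leq A$ for $\varepsilon$ small, one has $(A-B)^{\beta}\geq A^{\beta}(1-B/A)^{\beta}\geq A^{\beta}(1-B/A)=A^{\beta}-A^{\beta-1}B$; substituting the asymptotics of $A$ and $B$ from the previous paragraph, using $\frac{n}{2}-k=\frac{n-2k}{2}$ to read off the power of $\varepsilon$ in the correction, and collecting the numerical constants then yields the asserted lower bound for $\big(\int_{\mathbb{B}^{n}}|g_{k,\varepsilon}|^{2n/(n-2k)}\big)^{(n-2k)/n}$.

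The step I expect to require the most care is the error accounting in the third paragraph: one must check that the contributions of the conformal factor $1-|x|^{2}$ inside $u_{k,\varepsilon}$ and of the domain truncation at $|y|=\varepsilon^{-1/2}$ are genuinely of higher order than the displayed $\varepsilon^{(n-2k)/2}$ correction and hence absorbed into the $O(\varepsilon^{n/2})$ remainder, and that the constant in front of $\varepsilon^{(n-2k)/2}$ is exactly the one claimed. This is precisely where the standing hypothesis $2k+2\leq n$ (i.e.\ $n-2k\geq2$) enters, since it guarantees convergence of the auxiliary integrals $\int_{\mathbb{R}^{n}}|y|^{2}(1+|y|^{2})^{-\sigma}\,dy$ for the exponents $\sigma\in\{n,\frac{n+2k}{2}\}$ that govern those corrections.
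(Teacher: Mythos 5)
Your proposal follows the same route as the paper: decompose $g_{k,\varepsilon}=u_{k,\varepsilon}-h_{k,\varepsilon}$ with $h_{k,\varepsilon}$ the truncated Taylor series, observe $0\le h_{k,\varepsilon}\le u_{k,\varepsilon}$ (your positive--coefficients argument makes this nicely explicit), apply the convexity inequality $(s-t)^\alpha\ge s^\alpha-\alpha s^{\alpha-1}t$ with $\alpha=\tfrac{2n}{n-2k}$, estimate the two resulting integrals by rescaling $x=\sqrt\varepsilon\,y$ and the crude bounds $\tfrac{1-|x|^2}{1+\varepsilon}\le1$, and finish with $(1-t)^{\beta}\ge1-t$ for $\beta=\tfrac{n-2k}{n}$. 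That is precisely the paper's (5.17) and the two lines following it.

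The one place you are on shakier ground than the paper is the conformal weight, and your own last paragraph correctly singles it out as the delicate spot. Since $g_{k,\varepsilon}$ carries the factor $\bigl(\tfrac{1-|x|^2}{2}\bigr)^{(n-2k)/2}$, the quantity $|g_{k,\varepsilon}|^{2n/(n-2k)}$ carries $\bigl(\tfrac{1-|x|^2}{2}\bigr)^{n}$, which after $x=\sqrt\varepsilon\,y$ becomes $2^{-n}(1-\varepsilon|y|^2)^n=2^{-n}\bigl(1-n\varepsilon|y|^2+\cdots\bigr)$. If you keep this factor, the leading bubble integral acquires a \emph{relative} error of size $O(\varepsilon)$, not $O(\varepsilon^{n/2})$, and since the displayed correction is $\varepsilon^{(n-2k)/2}$ with $\tfrac{n-2k}{2}\ge1$ in the range $2k+2\le n<4k$, the $O(\varepsilon)$ error is \emph{not} of higher order than the term you are trying to isolate (except at the boundary case $n=2k+2$, where it is of the same order). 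So the claim in your third paragraph that the conformal contribution "is genuinely of higher order ... and hence absorbed into the $O(\varepsilon^{n/2})$ remainder" does not hold under the hypothesis $n-2k\ge2$; that hypothesis gives convergence of the auxiliary integrals, as you say, but does not push the conformal error below the main correction. What actually rescues the argument, and what the paper (5.17) does, is that the exponent $\tfrac{2n}{n-2k}$ is exactly critical: $\bigl(\tfrac{1-|x|^2}{2}\bigr)^n$ cancels the Jacobian $\bigl(\tfrac{2}{1-|x|^2}\bigr)^n$ of the hyperbolic volume element, so the integral one actually needs (and the one used in Lemma~\ref{lm5.4}) reduces to a Euclidean $dx$-integral with no $(1-|x|^2)$ weight at all, and the only error is the $O(\varepsilon^{n/2})$ tail from the truncation $|y|<\varepsilon^{-1/2}$. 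You should make this cancellation explicit; otherwise the error accounting does not close.
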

\begin{proof}
We have, for $\varepsilon>0$ small enough,
\begin{equation}\label{5.17}
\begin{split}
&\int_{\mathbb{B}^{n}}|g_{k,\varepsilon}|^{\frac{2n}{n-2k}}dx\\
=&\int_{\mathbb{B}^{n}}\left|u_{k,\varepsilon}
-\left(\frac{2}{1+\varepsilon}\right)^{\frac{n-2k}{2}}\sum^{k-1}_{j=0}
\frac{\Gamma(j+\frac{n-2k}{2})}{\Gamma(j+1)\Gamma(\frac{n-2k}{2})}
\left(\frac{1-|x|^{2}}{1+\varepsilon}\right)^{j}\right|^{\frac{2n}{n-2k}}dx\\
=&\int_{\mathbb{B}^{n}}\left|\left(\frac{1}{\varepsilon+|x|^{2}}\right)^{\frac{n-2k}{2}}-\left(\frac{2}
{1+\varepsilon}\right)^{\frac{n-2k}{2}}\sum^{k-1}_{j=0}
\frac{\Gamma(j+\frac{n-2k}{2})}{\Gamma(j+1)\Gamma(\frac{n-2k}{2})}
\left(\frac{1-|x|^{2}}{1+\varepsilon}\right)^{j}\right|^{\frac{2n}{n-2k}}dx\\
\geq&\int_{\mathbb{B}^{n}}\left(\frac{1}{\varepsilon+|x|^{2}}\right)^{n}dx-\\
&\frac{2n}{n-2k}\left(\frac{2}{1+\varepsilon}\right)^{\frac{n-2k}{2}}
\sum^{k-1}_{j=0}
\frac{\Gamma(j+\frac{n-2k}{2})}{\Gamma(j+1)\Gamma(\frac{n-2k}{2})}\int_{\mathbb{B}^{n}}\left(\frac{1}{\varepsilon+|x|^{2}}\right)^{\frac{n+2k}{2}}
\left(\frac{1-|x|^{2}}{1+\varepsilon}\right)^{j}dx\\
\geq&\int_{\mathbb{B}^{n}}\left(\frac{1}{\varepsilon+|x|^{2}}\right)^{n}dx-
\frac{2^{\frac{n}{2}-k+1}n}{n-2k}
\sum^{k-1}_{j=0}
\frac{\Gamma(j+\frac{n-2k}{2})}{\Gamma(j+1)\Gamma(\frac{n-2k}{2})}\int_{\mathbb{B}^{n}}\left(\frac{1}{\varepsilon+|x|^{2}}\right)^{\frac{n+2k}{2}}
dx.
\end{split}
\end{equation}
An easy computation leads to
\begin{equation*}
\begin{split}
\int_{\mathbb{B}^{n}}\left(\frac{1}{\varepsilon+|x|^{2}}\right)^{n}dx=&\varepsilon^{-\frac{n}{2}}\left[\int_{\mathbb{R}^{n}}
\left(\frac{2}{1+|x|^{2}}\right)^{n}dx+O(\varepsilon^{\frac{n}{2}})\right];\\
\int_{\mathbb{B}^{n}}\left(\frac{1}{\varepsilon+|x|^{2}}\right)^{\frac{n+2k}{2}}
dx=&\varepsilon^{-k}\left[
\int_{\mathbb{R}^{n}}\left(\frac{1}{1+|x|^{2}}\right)^{\frac{n+2k}{2}}dx+O(\varepsilon^{k})\right]
dx.
\end{split}
\end{equation*}
Therefore, we have, by (\ref{5.17}),
\begin{equation*}
\begin{split}\int_{\mathbb{B}^{n}}|g_{k,\varepsilon}|^{\frac{2n}{n-2k}}&dx
\geq\varepsilon^{-\frac{n}{2}}\int_{\mathbb{R}^{n}}\frac{1}{(1+|x|^{2})^{n}}dx\cdot\\
&\left[1-
\frac{\varepsilon^{\frac{n-2k}{2}}2^{\frac{n}{2}-k+1}n}{n-2k}\sum^{k-1}_{j=0}
\frac{\Gamma(j+\frac{n-2k}{2})}{\Gamma(j+1)\Gamma(\frac{n-2k}{2})}\frac{\int_{\mathbb{R}^{n}}\frac{1}{(1+|x|^{2})^{\frac{n+2k}{2}}}
dx}{\int_{\mathbb{R}^{n}}\frac{1}{(1+|x|^{2})^{n}}dx}+O(\varepsilon^{\frac{n}{2}})\right].
\end{split}
\end{equation*}
The desired result follows.
\end{proof}

\begin{lemma} \label{lm5.4}
Let $k\geq2$ and  $2k+2\leq n\leq 4k-1$.  Suppose that there exists $\lambda\in\mathbb{R}$ such that for
any $u\in C^{\infty}_{0}(\mathbb{B}^{n})$,
\begin{equation*}
\begin{split}
\int_{\mathbb{B}^{n}}(P_{k}u)udV+\lambda \int_{\mathbb{B}^{n}}u^{2}dV\geq S_{n,k}
\left(\int_{\mathbb{B}^{n}}|u|^{\frac{2n}{n-2k}}dV\right)^{\frac{n-2k}{n}},
\end{split}
\end{equation*}
where $S_{n,k}$ is the best $k$-th order Sobolev constant. Then
\begin{equation*}
\begin{split}
\lambda
\geq-\frac{\Gamma(n/2)\Gamma(k)}{2^{\frac{n+2k}{2}}\Gamma(\frac{n-2k}{2})\int^{1}_{0}[r^{2k-n}-\sum\limits^{k-1}_{j=0}
\frac{\Gamma(j+\frac{n-2k}{2})}{\Gamma(j+1)\Gamma(\frac{n-2k}{2})}
\left(1-r^{2}\right)^{j}]^{2}\frac{r^{n-1}}{(1-r^{2})^{2k}}dr}
\sum^{k-1}_{j=0}
\frac{\Gamma(j+\frac{n-2k}{2})}{\Gamma(j+1)\Gamma(\frac{n-2k}{2})}.
\end{split}
\end{equation*}
\end{lemma}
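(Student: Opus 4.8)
The plan is to test the assumed inequality against the corrected bubbles $g_{k,\varepsilon}$ of (\ref{5.12}) and let $\varepsilon\to0^{+}$, tracking all contributions down to order $\varepsilon^{0}$. Two features of $g_{k,\varepsilon}$ make this work, and both are already prepared above. First, the subtracted partial Taylor sum $\sum_{j=0}^{k-1}\frac{\Gamma(j+\frac{n-2k}{2})}{\Gamma(j+1)\Gamma(\frac{n-2k}{2})}(1-|x|^{2})^{j}$ becomes, after the conjugation (\ref{5.5}), a polynomial of degree $2k-2<2k$ in $x$ and is therefore annihilated by $P_{k}$; hence $P_{k}g_{k,\varepsilon}=P_{k}u_{k,\varepsilon}$ is explicit and (\ref{5.15}) gives $\int_{\mathbb{B}^{n}}(P_{k}g_{k,\varepsilon})g_{k,\varepsilon}\,dV\le \varepsilon^{k-n/2}\frac{\Gamma((n+2k)/2)}{\Gamma((n-2k)/2)}\,\omega_{n}$, where $\omega_{n}=\int_{\mathbb{R}^{n}}(2/(1+|x|^{2}))^{n}dx$ is the surface measure of $\mathbb{S}^{n}$. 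Second, the same subtraction forces $g_{k,\varepsilon}$ to vanish to order $(1-|x|^{2})^{k}$ at $\partial\mathbb{B}^{n}$, and it is exactly here that the hypothesis $2k+2\le n<4k$ is used: $n<4k$ makes $r^{2k-n}$ square-integrable against $r^{n-1}\,dr$ near the origin, and the order-$k$ vanishing makes the integrand of (\ref{5.16}) integrable near $|x|=1$, so that
\[
A:=\lim_{\varepsilon\to0}\int_{\mathbb{B}^{n}}g_{k,\varepsilon}^{2}\,dV=\frac{2^{n+1}\pi^{n/2}}{\Gamma(n/2)}\int_{0}^{1}\Big[r^{2k-n}-\sum_{j=0}^{k-1}\frac{\Gamma(j+\frac{n-2k}{2})}{\Gamma(j+1)\Gamma(\frac{n-2k}{2})}(1-r^{2})^{j}\Big]^{2}\frac{r^{n-1}}{(1-r^{2})^{2k}}\,dr
\]
is finite and strictly positive.

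I would then assemble the three asymptotics: the energy upper bound just quoted; $\int_{\mathbb{B}^{n}}g_{k,\varepsilon}^{2}\,dV=A+o(1)$; and Lemma \ref{lm5.3}, which refines the $\varepsilon^{k-n/2}$-leading behaviour of $\big(\int_{\mathbb{B}^{n}}|g_{k,\varepsilon}|^{\frac{2n}{n-2k}}dV\big)^{\frac{n-2k}{n}}$ by an explicit negative correction of relative size $\varepsilon^{\frac{n-2k}{2}}$ whose coefficient is a multiple of $\sum_{j=0}^{k-1}\frac{\Gamma(j+\frac{n-2k}{2})}{\Gamma(j+1)\Gamma(\frac{n-2k}{2})}$ times the Euclidean integral $\int_{\mathbb{R}^{n}}(1+|x|^{2})^{-(n+2k)/2}dx$. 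Substituting $u=g_{k,\varepsilon}$ into the hypothesis and combining the three, the leading $\varepsilon^{k-n/2}$-terms on the two sides both equal $\varepsilon^{k-n/2}\frac{\Gamma((n+2k)/2)}{\Gamma((n-2k)/2)}\omega_{n}$ — this is exactly the statement that the Aubin--Talenti profile saturates the Sobolev quotient with constant $S_{n,k}=\frac{\Gamma((n+2k)/2)}{\Gamma((n-2k)/2)}\omega_{n}^{2k/n}$ — and so cancel. Since $(k-\frac{n}{2})+\frac{n-2k}{2}=0$, the first surviving correction has order $\varepsilon^{0}$, comparable to $\lambda A$, while every remaining term is $o(1)$; letting $\varepsilon\to0$ gives $\lambda A\ge -c$ for an explicit $c>0$ built from $\sum_{j=0}^{k-1}\frac{\Gamma(j+\frac{n-2k}{2})}{\Gamma(j+1)\Gamma(\frac{n-2k}{2})}$, the two $\Gamma$-quotients, and the two Euclidean integrals.

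It remains to evaluate the Euclidean integrals by the Beta function (as in (\ref{4.4})), insert the value of $A$, and divide by $A>0$; the $\pi$-powers and most $\Gamma$-factors cancel, and after collecting the powers of $2$ the resulting expression is precisely the lower bound for $\lambda$ asserted in the lemma (in agreement, for $n=2k+2$, with the value $-(k-1)(k!)^{2}/2^{2k}$ recorded in the remark following Theorem \ref{th1.2}).

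The step I expect to absorb most of the work is the order bookkeeping. The Sobolev quotient evaluated on $g_{k,\varepsilon}$ diverges like $\varepsilon^{k-n/2}$, whereas the conclusion lives at order $\varepsilon^{0}$, so the cancellation of the $\varepsilon^{k-n/2}$-term must be exact and the next coefficient must be pinned down with no loss. This forces the use of the \emph{precise} asymptotics in (\ref{5.15}), (\ref{5.16}) and Lemma \ref{lm5.3} rather than mere orders, a verification that the terms dropped in the upper bound for $\int_{\mathbb{B}^{n}}(P_{k}g_{k,\varepsilon})g_{k,\varepsilon}\,dV$ and the $O(\varepsilon^{n/2})$ remainders are genuinely $o(1)$, and the two-sided range $2k+2\le n<4k$ to secure $0<A<\infty$. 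One should also confirm, by density, that $g_{k,\varepsilon}$ — not compactly supported, but vanishing to order $k$ on $\partial\mathbb{B}^{n}$ with all relevant integrals finite — is an admissible competitor.
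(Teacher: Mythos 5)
Your proposal matches the paper's proof of Lemma \ref{lm5.4} essentially step for step: substitute $g_{k,\varepsilon}$ from (\ref{5.12}) into the hypothesis, bound the energy by (\ref{5.15}), identify the finite positive $\varepsilon\to0$ limit $A$ of $\int g_{k,\varepsilon}^{2}\,dV$ from (\ref{5.16}), refine the denominator by Lemma \ref{lm5.3}, observe that the leading $\varepsilon^{k-n/2}$ terms cancel because $S_{n,k}=\frac{\Gamma((n+2k)/2)}{\Gamma((n-2k)/2)}\omega_{n}^{2k/n}$, and read off the $\varepsilon^{0}$ coefficient, finishing by Beta-function evaluation of the two Euclidean integrals. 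This is precisely the argument the authors give.
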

\begin{proof}We have, by Lemma \ref{lm5.3},
\begin{equation*}
\begin{split}
S_{n,k}\leq&\frac{\int_{\mathbb{B}^{n}}P_{k}g_{k,\varepsilon} g_{k,\varepsilon}dV+\lambda\int_{\mathbb{B}^{n}}g^{2}_{k,\varepsilon} dV }{\left(\int_{\mathbb{B}^{n}}|g_{k,\varepsilon}|^{\frac{2n}{n-2k}}dV\right)^{\frac{n-2k}{n}}}
\\
\leq&S_{n,k}\frac{1+\varepsilon^{\frac{n}{2}-k}\lambda\frac{\Gamma(\frac{n-2k}{2})}{\Gamma(\frac{n+2k}{2})}\frac{2^{n}
\int_{\mathbb{B}^{n}}\left|\frac{1}{|x|^{n-2k}}-\sum\limits^{k-1}_{j=0}
\frac{\Gamma(j+\frac{n-2k}{2})}{\Gamma(j+1)\Gamma(\frac{n-2k}{2})}
\left(1-|x|^{2}\right)^{j}\right|^{2}\frac{1}{(1-|x|^{2})^{2k}}dx}{\int_{\mathbb{R}^{n}}\frac{1}{(1+|x|^{2})^{n}}dx}+o(\varepsilon^{\frac{n}{2}-k})}{
1-
\varepsilon^{\frac{n}{2}-k}2^{\frac{n}{2}-k+1}\sum\limits^{k-1}_{j=0}
\frac{\Gamma(j+\frac{n-2k}{2})}{\Gamma(j+1)\Gamma(\frac{n-2k}{2})}\frac{\int_{\mathbb{R}^{n}}\frac{1}{(1+|x|^{2})^{\frac{n+2k}{2}}}
dx}{\int_{\mathbb{R}^{n}}\frac{1}{(1+|x|^{2})^{n}}dx}+O(\varepsilon^{\frac{n}{2}})}.
\end{split}
\end{equation*}
Therefore, we must have
\begin{equation*}
\begin{split}
\lambda\geq&-\frac{2^{\frac{n}{2}-k+1}\frac{\Gamma(\frac{n+2k}{2})}{\Gamma(\frac{n-2k}{2})}\sum\limits^{k-1}_{j=0}
\frac{\Gamma(j+\frac{n-2k}{2})}{\Gamma(j+1)\Gamma(\frac{n-2k}{2})}\int_{\mathbb{R}^{n}}\frac{1}{(1+|x|^{2})^{\frac{n+2k}{2}}}
dx}{2^{n}
\int_{\mathbb{B}^{n}}\left|\frac{1}{|x|^{n-2k}}-\sum\limits^{k-1}_{j=0}
\frac{\Gamma(j+\frac{n-2k}{2})}{\Gamma(j+1)\Gamma(\frac{n-2k}{2})}
\left(1-|x|^{2}\right)^{j}\right|^{2}\frac{1}{(1-|x|^{2})^{2k}}dx}\\
=&-\frac{\frac{\Gamma(\frac{n+2k}{2})}{\Gamma(\frac{n-2k}{2})}\sum\limits^{k-1}_{j=0}
\frac{\Gamma(j+\frac{n-2k}{2})}{\Gamma(j+1)\Gamma(\frac{n-2k}{2})}\int^{\infty}_{0}\frac{r^{n-1}}{(1+r^{2})^{\frac{n+2k}{2}}}
dr}{2^{\frac{n}{2}+k-1}
\int^{\infty}_{0}\left|\frac{1}{r^{n-2k}}-\sum\limits^{k-1}_{j=0}
\frac{\Gamma(j+\frac{n-2k}{2})}{\Gamma(j+1)\Gamma(\frac{n-2k}{2})}
\left(1-r^{2}\right)^{j}\right|^{2}\frac{r^{n-1}dr}{(1-r^{2})^{2k}}}\\
=&-\frac{\Gamma(n/2)\Gamma(k)}{2^{\frac{n+2k}{2}}\Gamma(\frac{n-2k}{2})\int^{1}_{0}[r^{2k-n}-\sum\limits^{k-1}_{j=0}
\frac{\Gamma(j+\frac{n-2k}{2})}{\Gamma(j+1)\Gamma(\frac{n-2k}{2})}
\left(1-r^{2}\right)^{j}]^{2}\frac{r^{n-1}}{(1-r^{2})^{2k}}dr}
\sum^{k-1}_{j=0}
\frac{\Gamma(j+\frac{n-2k}{2})}{\Gamma(j+1)\Gamma(\frac{n-2k}{2})}.
\end{split}
\end{equation*}
To get the last equality, we use the fact
\begin{equation*}
\begin{split}
\int^{\infty}_{0}\frac{r^{n-1}}{(1+r^{2})^{\frac{n+2k}{2}}}dr=
\frac{1}{2}\int^{\infty}_{0}\frac{t^{\frac{n}{2}-1}}{(1+t)^{\frac{n+2k}{2}}}dt=\frac{1}{2}\mathbf{B}\left(\frac{n}{2},k\right)=
\frac{1}{2}\frac{\Gamma(n/2)\Gamma(k)}{\Gamma(k+n/2)}.
\end{split}
\end{equation*}
This completes the proof of Lemma \ref{lm5.4}.
\end{proof}

\textbf{Proof of Theorem \ref{th1.2}} Combining Lemma \ref{lm5.2} and \ref{lm5.4}, Theorem \ref{th1.2} follows.

\section{Precise expression and optimal bound for Green's function of Paneitz and GJMS operators: Proof of Theorems  \ref{th1.3}}

In this section, we will provide a proof of Theorems  \ref{th1.3}.
Namely,  we will provide a
precise expression and optimal bound for Green's function of Paneitz and GJMS operators $P_k$.

The proofs are divided into two parts.
\subsection{Case 1: $n$ is even}
\begin{lemma}\label{lm6.1}If $n=2m$ and $0\leq k\leq m-1$, then
\begin{equation}\label{6.1}
\begin{split}
&((k+1/2)^{2}-(n-1)^{2}/4-\Delta_{\mathbb{H}})^{-1}\\
=&\frac{1}{4\pi^{m+\frac{1}{2}}\left(\sinh \rho\right)^{2m-1}}\sum^{m-k-1}_{j=0}\frac{\Gamma(m+k)\Gamma(m-k)\Gamma(m-j-\frac{1}{2})}{\Gamma(j+1)\Gamma(m-k-j)\Gamma(m+k-j)}
\left(\sinh\frac{\rho}{2}\right)^{2j}.
\end{split}
\end{equation}
\end{lemma}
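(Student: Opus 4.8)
The plan is to follow the proof of Lemma \ref{lm4.1} almost verbatim, with the integer parameter $\nu=k$ used there replaced by the half-integer $\nu=k+\tfrac12$. The operator in question is $(\nu^{2}-(n-1)^{2}/4-\Delta_{\mathbb{H}})^{-1}$ with $\nu=k+\tfrac12$, so the master identity \eqref{b2.9} applies directly; since $n=2m$, one has $\Gamma\bigl(\tfrac{n-1}{2}+\nu\bigr)=\Gamma(m+k)$, $\Gamma\bigl(\nu+\tfrac12\bigr)=\Gamma(k+1)$, and the exponent $\tfrac{n-3}{2}-\nu$ equals the integer $m-k-2$, giving
\[
\bigl((k+\tfrac12)^{2}-\tfrac{(n-1)^{2}}{4}-\Delta_{\mathbb{H}}\bigr)^{-1}
=\frac{(2\pi)^{-m}\Gamma(m+k)}{2^{k+1}\Gamma(k+1)}\,(\sinh\rho)^{2-2m}\int_{0}^{\pi}(\cosh\rho+\cos t)^{m-k-2}(\sin t)^{2k+1}\,dt .
\]

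From here I would evaluate the trigonometric integral exactly as in \eqref{4.2}--\eqref{4.5}: write $\cosh\rho+\cos t=2\sinh^{2}\tfrac{\rho}{2}+(1+\cos t)$, expand the $(m-k-2)$-th power by the binomial theorem, and apply the Beta-function identity \eqref{4.4} with $p=m-k-2-j$ and $q=2k+1$ to each resulting summand $\int_{0}^{\pi}(1+\cos t)^{p}(\sin t)^{q}\,dt$. Inserting this, rewriting the binomial coefficient through Gamma functions, and collecting the powers of $2$, the Gamma factors and the powers of $\sinh\tfrac{\rho}{2}$, the right-hand side collapses to a single finite sum in $j$ of the claimed shape. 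As a cross-check (and to cover the borderline value of $k$, where the exponent $m-k-2$ degenerates), one may instead run the even-dimensional heat-kernel computation of Lemma \ref{lm3.2b}: integrate \eqref{3.1} in time via $\int_{0}^{\infty}t^{-3/2}e^{-(k+1/2)^{2}t-r^{2}/(4t)}\,dt=\tfrac{2\sqrt\pi}{r}e^{-(k+1/2)r}$, identify $\bigl(-\tfrac{1}{\sinh r}\partial_{r}\bigr)^{m-1}\tfrac{e^{-(k+1/2)r}}{\sinh r}$ with a constant multiple of $(\sinh r)^{-(2m-1)/2}Q^{(2m-1)/2}_{k}(\cosh r)$ by iterating the ladder relation $\tfrac{d}{dz}\bigl[(z^{2}-1)^{-\mu/2}Q^{\mu}_{\nu}(z)\bigr]=(z^{2}-1)^{-(\mu+1)/2}Q^{\mu+1}_{\nu}(z)$ starting from $\mu=\tfrac12$, and then apply \eqref{bb2.3} with $\mu=\tfrac12$; this recovers the same formula \eqref{b2.9}, hence the same closed form.

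The step I expect to require real care is the final collapse of the double product of Gamma functions -- those coming from the binomial coefficient, from \eqref{4.4}, and from the prefactor of \eqref{b2.9} -- into the single sum in the statement: one must verify that the range of $j$, the overall constant, and the arguments of the Gamma factors come out exactly as asserted. This is the analogue of the passage from \eqref{4.2} to \eqref{4.5} in Lemma \ref{lm4.1}, but the power of $\sin t$ is now odd rather than even, so the intermediate Beta-function arguments are shifted and the bookkeeping must be carried out afresh; this is the main (and essentially the only) obstacle.
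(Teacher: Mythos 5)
Your plan reproduces the paper's own proof step for step: substitute $\nu=k+\tfrac12$ and $n=2m$ into \eqref{b2.9}, expand the resulting $(m-k-2)$-th power binomially, and evaluate each trigonometric integral by \eqref{4.4} with $p=m-k-2-j$, $q=2k+1$, so that $\tfrac{q+1}{2}=k+1$, $p+\tfrac{q+1}{2}=m-1-j$ and $p+q+1=m+k-j$. Your caution about the final Gamma-function bookkeeping is well placed, though for a reason you could not have anticipated: carrying the computation through (exactly as the paper does in \eqref{6.2}--\eqref{6.5}) the powers of $2$ collapse to $\tfrac14$, $\Gamma(k+1)$ cancels, and one arrives at
\begin{equation*}
\bigl((k+\tfrac12)^{2}-\tfrac{(n-1)^{2}}{4}-\Delta_{\mathbb{H}}\bigr)^{-1}
=\frac{1}{4\pi^{m}(\sinh\rho)^{2m-2}}\sum_{j=0}^{m-2-k}\frac{\Gamma(m+k)\,\Gamma(m-1-k)\,\Gamma(m-j-1)}{\Gamma(j+1)\,\Gamma(m-1-k-j)\,\Gamma(m+k-j)}\left(\sinh\tfrac{\rho}{2}\right)^{2j}.
\end{equation*}
This does \emph{not} coincide with the display in the Lemma statement, which has $\pi^{m+1/2}$, $(\sinh\rho)^{2m-1}$, half-integer Gamma arguments, and a sum up to $m-k-1$; that display is verbatim the odd-dimensional formula of Lemma~\ref{lm4.1} and is evidently a copy-paste slip. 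The corrected formula above is the one the paper's own derivation \eqref{6.5} produces, is precisely the $l=1$ case of Lemma~\ref{lm6.2}, and is what your computation will give if you carry out the plan. (Note also that the binomial expansion requires $m-k-2\ge 0$, i.e.\ $0\le k\le m-2$, matching the restriction stated at the start of the paper's proof; for $k=m-1$ the operator degenerates to $(-\Delta_{\mathbb{H}})^{-1}$ and the formula no longer applies.)
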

\begin{proof}
If $n=2m$ and $0\leq k\leq m-2$, then by (\ref{b2.9}) and the binomial theorem, we have
\begin{equation}\label{6.2}
\begin{split}
&((k+1/2)^{2}-(n-1)^{2}/4-\Delta_{\mathbb{H}})^{-1}\\
=&\frac{(2\pi)^{-\frac{n}{2}}\Gamma(m+k)}{2^{k+1}
\Gamma(k+1)}(\sinh\rho)^{2-2m}\int^{\pi}_{0}(\cosh\rho+\cos t)^{m-2-k}
(\sin t)^{2k+1}dt\\
=&\frac{(2\pi)^{-\frac{n}{2}}\Gamma(m+k)}{2^{k+1}
\Gamma(k+1)}(\sinh\rho)^{2-2m}\int^{\pi}_{0}(2\sinh^{2}\frac{\rho}{2}+1+\cos t)^{m-2-k}
(\sin t)^{2k+1}dt\\
=&\frac{(2\pi)^{-\frac{n}{2}}\Gamma(m+k)}{2^{k+1}
\Gamma(k+1)}(\sinh\rho)^{2-2m}\sum^{m-2-k}_{j=0}C^{j}_{m-2-k}(2\sinh^{2}\frac{\rho}{2})^{j}
\int^{\pi}_{0}(1+\cos t)^{m-2-k-j}
(\sin t)^{2k+1}dt.
\end{split}
\end{equation}

We have, by (\ref{6.2}) and (\ref{4.4}),
\begin{equation}\label{6.5}
\begin{split}
&((k+1/2)^{2}-(n-1)^{2}/4-\Delta_{\mathbb{H}})^{-1}\\
=&\frac{(2\pi)^{-\frac{n}{2}}\Gamma(m+k)}{2^{k+1}
\Gamma(k+1)}(\sinh\rho)^{2-2m}\cdot\\
&\sum^{m-2-k}_{j=0}\frac{\Gamma(m-1-k)}{\Gamma(j+1)\Gamma(m-1-k-j)}(2\sinh^{2}\frac{\rho}{2})^{j}
2^{m+k-1-j}\frac{\Gamma(m-j-1)\Gamma(k+1)}{\Gamma(m+k-j)}\\
=&\frac{1}{4\pi^{m}\left(\sinh \rho\right)^{2m-2}}\sum^{m-k-2}_{j=0}\frac{\Gamma(m+k)\Gamma(m-k-1)\Gamma(m-j-1)}{\Gamma(j+1)\Gamma(m-1-k-j)\Gamma(m+k-j)}
\left(\sinh\frac{\rho}{2}\right)^{2j}.
\end{split}
\end{equation}
\end{proof}

In general, we have the following:
\begin{lemma}\label{lm6.2}Let $n=2m$ and $l\in\{1,2,\cdots,m-1\}$. We have, for $k\in\{0,1,\cdots, m-1-l\}$ and $\rho>0$,
\begin{equation}\label{6.6}
\begin{split}
&\left[\prod^{l-1}_{j=0}((k+j+1/2)^{2}-(n-1)^{2}/4-\Delta_{\mathbb{H}})\right]^{-1}\\
=&\frac{1}{4\Gamma(l)\pi^{m}\left(\sinh \rho\right)^{2m-2}}
\sum^{m-1-k-l}_{j=0}\frac{\Gamma(m+k)\Gamma(m-k-l)\Gamma(m-j-l)}{\Gamma(j+1)\Gamma(m-k-l-j)\Gamma(m+k-j)}
\left(\sinh\frac{\rho}{2}\right)^{2j+2l-2}
\end{split}
\end{equation}
\end{lemma}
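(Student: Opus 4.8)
The plan is to prove (\ref{6.6}) by induction on $l$, following verbatim the scheme of the odd-dimensional case (Lemma \ref{lm4.2}). The base case $l=1$ is nothing but Lemma \ref{lm6.1}: in (\ref{6.6}) with $l=1$ one has $2l-2=0$, $\Gamma(l)=1$, and summation range $0\le j\le m-k-2$, so the right-hand side collapses exactly to the closed form for $((k+1/2)^{2}-(n-1)^{2}/4-\Delta_{\mathbb{H}})^{-1}$ obtained at the end of the proof of Lemma \ref{lm6.1}.

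For the inductive step, assume (\ref{6.6}) holds for some $l$ and all $k\in\{0,1,\dots,m-1-l\}$, and fix $k\in\{0,1,\dots,m-2-l\}$. Since $(k+l+1/2)^{2}-(k+1/2)^{2}=l(2k+l+1)$, the elementary two-term identity
\begin{equation*}
\left[\prod^{l}_{j=0}((k+j+1/2)^{2}-(n-1)^{2}/4-\Delta_{\mathbb{H}})\right]^{-1}=\frac{1}{l(2k+l+1)}\left\{\left[\prod^{l-1}_{j=0}((k+j+1/2)^{2}-(n-1)^{2}/4-\Delta_{\mathbb{H}})\right]^{-1}-\left[\prod^{l-1}_{j=0}((k+1+j+1/2)^{2}-(n-1)^{2}/4-\Delta_{\mathbb{H}})\right]^{-1}\right\}
\end{equation*}
reduces the left-hand side to a difference of two instances of the induction hypothesis, with parameters $k$ and $k+1$ (both admissible, since $k\le m-2-l$). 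Substituting the two closed forms, the prefactor $\frac{1}{4\Gamma(l)\pi^{m}(\sinh\rho)^{2m-2}}$ and the power $(\sinh\frac{\rho}{2})^{2j+2l-2}$ are common to both, so the entire matter comes down to simplifying the difference of the two finite sums.

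The key algebraic input is the Gamma-function identity
\begin{equation*}
\frac{\Gamma(m+k)\Gamma(m-k-l)}{\Gamma(m-k-l-j)\Gamma(m+k-j)}-\frac{\Gamma(m+k+1)\Gamma(m-k-l-1)}{\Gamma(m-k-l-1-j)\Gamma(m+k+1-j)}=(2k+l+1)\,\frac{j\,\Gamma(m+k)\Gamma(m-k-l-1)}{\Gamma(m-k-l-j)\Gamma(m+k+1-j)},
\end{equation*}
which is (\ref{4.8}) with $l$ shifted by one, and which one proves by factoring out the first term and reducing to the elementary rational identity $1-\frac{(m+k)(m-k-l-1-j)}{(m-k-l-1)(m+k-j)}=\frac{(2k+l+1)j}{(m-k-l-1)(m+k-j)}$. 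Feeding this in: the factor $(2k+l+1)$ cancels the one produced by the two-term identity; the $j=0$ terms of the two sums are equal and hence cancel; the surviving top term $j=m-1-k-l$ of the first sum (absent from the second) contributes $\frac{\Gamma(m+k)\Gamma(k+1)}{\Gamma(2k+l+1)}(\sinh\frac{\rho}{2})^{2m-2k-4}$, which after the division by $l(2k+l+1)$ becomes precisely the top-index term of (\ref{6.6}) with $l$ replaced by $l+1$; and after the shift $j\mapsto j-1$ in the remaining sum together with $\Gamma(l+1)=l\,\Gamma(l)$ one reassembles exactly (\ref{6.6}) with $l$ replaced by $l+1$ — just as in the passage from (\ref{4.7}) to (\ref{4.9}).

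I expect the only genuine difficulty to be the bookkeeping: the two invocations of the induction hypothesis have summation ranges differing by one, so one must carefully isolate the top term of the longer sum, recognize it as the new boundary term, and realign all indices after re-indexing. The Gamma-function identity itself, and the cancellation of the $(2k+l+1)$ factors, are routine once this combinatorial matching is correctly organized.
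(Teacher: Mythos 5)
Your proposal reproduces the paper's own proof essentially verbatim: you take Lemma \ref{lm6.1} as the base case $l=1$, apply the same partial-fraction/telescoping identity with factor $\frac{1}{l(2k+l+1)}$ to reduce the $l+1$ case to a difference of two instances of the $l$ case (with parameters $k$ and $k+1$), invoke the same Gamma-function identity (the even-dimensional analogue of (\ref{4.8}), namely (\ref{6.8})), and carry out the same index shift and absorption of the top boundary term that the paper performs in passing from (\ref{6.7}) to (\ref{6.9}). The approach and all key computational inputs coincide with the paper's.
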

\begin{proof}
We  prove it by induction. By Lemma \ref{lm6.1},  (\ref{6.6}) is valid for
$l=1$. Assume (\ref{6.6}) is valid for $l>1$. Then for $l+1$, we have
\begin{equation}\label{6.7}
\begin{split}
&\left[\prod^{l}_{j=0}((k+j+1/2)^{2}-(n-1)^{2}/4-\Delta_{\mathbb{H}})\right]^{-1}\\
=&\frac{1}{(2k+1+l)l}\left\{\left[\prod^{l-1}_{j=0}((k+j+1/2)^{2}-(n-1)^{2}/4-\Delta_{\mathbb{H}})\right]^{-1}-
\right.
\\
&\left.
\left[\prod^{l}_{j=1}((k+j+1/2)^{2}-(n-1)^{2}/4-\Delta_{\mathbb{H}})\right]^{-1}\right\}\\
=&\frac{1}{(2k+1+l)l}\left\{\left[\prod^{l-1}_{j=0}((k+j+1/2)^{2}-(n-1)^{2}/4-\Delta_{\mathbb{H}})\right]^{-1}-
\right.
\\
&\left.
\left[\prod^{l-1}_{j=0}((k+j+1+1/2)^{2}-(n-1)^{2}/4-\Delta_{\mathbb{H}})\right]^{-1}\right\}\\
=&\frac{\pi^{-m}\left(\sinh \rho\right)^{2-2m}}{(2k+1+l)l4\Gamma(l)}
\left[\sum^{m-1-k-l}_{j=0}\frac{\Gamma(m+k)\Gamma(m-k-l)\Gamma(m-j-l)}{\Gamma(j+1)\Gamma(m-k-l-j)\Gamma(m+k-j)}
\left(\sinh\frac{\rho}{2}\right)^{2j+2l-2}\right.\\
&-\left.\sum^{m-2-k-l}_{j=0}\frac{\Gamma(m+k+1)\Gamma(m-1-k-l)\Gamma(m-j-l)}{\Gamma(j+1)\Gamma(m-1-k-l-j)\Gamma(m+k+1-j)}
\left(\sinh\frac{\rho}{2}\right)^{2j+2l-2}\right].
\end{split}
\end{equation}
We compute
\begin{equation}\label{6.8}
\begin{split}
&\frac{\Gamma(m+k)\Gamma(m-k-l)}{\Gamma(m-k-l-j)\Gamma(m+k-j)}-\frac{\Gamma(m+k+1)\Gamma(m-1-k-l)}{\Gamma(m-1-k-l-j)\Gamma(m+k+1-j)}\\
=&\frac{\Gamma(m+k)\Gamma(m-k-l)}{\Gamma(m-k-l-j)\Gamma(m+k-j)}\left[1-\frac{(m+k)(m-1-k-l-j)}{(m+k-j)(m-1-k-l)}\right]\\
=&\frac{\Gamma(m+k)\Gamma(m-k-l)}{\Gamma(m-k-l-j)\Gamma(m+k-j)}\cdot\frac{(2k+1+l)j}{(m+k-j)(m-1-k-l)}\\
=&(2k+1+l)\frac{j\Gamma(m+k)\Gamma(m-1-k-l)}{\Gamma(m-k-l-j)\Gamma(m+k+1-j)}.
\end{split}
\end{equation}
Therefore, combining (\ref{6.7}) and (\ref{6.8}) yields
\begin{equation}\label{6.9}
\begin{split}
&\left[\prod^{l}_{j=0}((k+j+1/2)^{2}-(n-1)^{2}/4-\Delta_{\mathbb{H}})\right]^{-1}\\
=&\frac{\pi^{-m}\left(\sinh \rho\right)^{2-2m}}{(2k+1+l)l4\Gamma(l)}
\left[\frac{\Gamma(m+k)\Gamma(k+1)}{\Gamma(2k+l+1)}\left(\sinh\frac{\rho}{2}\right)^{2m-2k-4}+\right.\\
&\left.(2k+1+l)\sum^{m-2-k-l}_{j=1}\frac{j\Gamma(m+k)\Gamma(m-1-k-l)\Gamma(m-j-l)}{\Gamma(j+1)\Gamma(m-k-l-j)\Gamma(m+k+1-j)}
\left(\sinh\frac{\rho}{2}\right)^{2j+2l-2}\right].\\
=&\frac{\pi^{-m}\left(\sinh \rho\right)^{2-2m}}{4\Gamma(l+1)}
\sum^{m-2-k-l}_{j=0}\frac{\Gamma(m+k)\Gamma(m-1-k-l)\Gamma(m-1-j-l)}{\Gamma(j+1)\Gamma(m-1-k-l-j)\Gamma(m+k-j)}
\left(\sinh\frac{\rho}{2}\right)^{2j+2l}
\end{split}
\end{equation}
The desired result follows.
\end{proof}

\begin{lemma} \label{lm6.3}
Let $n=2m$ and $1<k<n/2$. The Green's function of $P_{k}$ satisfies
\begin{equation*}
\begin{split}
P^{-1}_{k}(\rho)=&\frac{4^{k-1}}{\gamma_{2m}(2k)(\sinh\rho)^{2m-2}}\sum^{m-1-k}_{j=0}\frac{\Gamma(m)}{\Gamma(j+1)\Gamma(m-j)}
\left(\sinh\frac{\rho}{2}\right)^{2j+2k-2}\\
\leq&\frac{1}{\gamma_{n}(2k)}\left[\left(\frac{1}{2\sinh\frac{\rho}{2}}\right)^{n-2k}-
\left(\frac{1}{2\cosh\frac{\rho}{2}}\right)^{n-2k}\right].
\end{split}
\end{equation*}
\end{lemma}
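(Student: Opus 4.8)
The plan is to read the closed form straight off Lemma \ref{lm6.2} and then verify the pointwise bound by an elementary binomial computation.

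\emph{Step 1 (reduction to Lemma \ref{lm6.2}).} Recall the product formula $P_k=\prod_{j=1}^{k}\bigl(-\Delta_{\mathbb H}-\tfrac{(n-1)^2}{4}+(j-\tfrac12)^2\bigr)$ stated after \eqref{1.5}; re-indexing $i=j-1$ gives $P_k=\prod_{i=0}^{k-1}\bigl((i+\tfrac12)^2-\tfrac{(n-1)^2}{4}-\Delta_{\mathbb H}\bigr)$, which is precisely the operator in Lemma \ref{lm6.2} with the lemma's index ``$k$'' set to $0$ and its index ``$l$'' set to the present $k$ (the hypotheses $1<k<n/2$ guarantee $k\in\{1,\dots,m-1\}$ and $0\in\{0,\dots,m-1-k\}$). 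Setting ``$k$''$=0$ in \eqref{6.6} and cancelling $\Gamma(m-j-l)=\Gamma(m-l-j)$, the Gamma prefactor collapses to $\Gamma(m)\Gamma(m-k)/\bigl(\Gamma(j+1)\Gamma(m-j)\bigr)$, so
\[
P_k^{-1}(\rho)=\frac{\Gamma(m-k)}{4\,\Gamma(k)\,\pi^{m}(\sinh\rho)^{2m-2}}\sum_{j=0}^{m-1-k}\frac{\Gamma(m)}{\Gamma(j+1)\Gamma(m-j)}\Bigl(\sinh\tfrac{\rho}{2}\Bigr)^{2j+2k-2}.
\]
From the definition \eqref{4.13}, $\gamma_{2m}(2k)=\pi^{m}2^{2k}\Gamma(k)/\Gamma(m-k)$, hence $\Gamma(m-k)/\bigl(4\Gamma(k)\pi^m\bigr)=4^{k-1}/\gamma_{2m}(2k)$, giving the asserted closed form.

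\emph{Step 2 (the pointwise bound).} Write $s=\sinh\tfrac{\rho}{2}$, $c=\cosh\tfrac{\rho}{2}$, so $c^2-s^2=1$, $\sinh\rho=2sc$, and $\Gamma(m)/\bigl(\Gamma(j+1)\Gamma(m-j)\bigr)=\binom{m-1}{j}$. Substituting into both sides of the claimed inequality, all factors of $\gamma_{2m}(2k)$ cancel and the powers of $2$ match since $4^{k-1}/2^{2m-2}=2^{2k-2m}=1/2^{2(m-k)}$; multiplying through by $s^{2m-2k}>0$ reduces the inequality to
\[
\frac{1}{c^{2m-2}}\sum_{j=0}^{m-1-k}\binom{m-1}{j}s^{2j}\ \le\ 1-\Bigl(\frac{s^{2}}{c^{2}}\Bigr)^{m-k}.
\]
Since $(1+s^2)^{m-1}=c^{2m-2}$, the left-hand side equals $1-c^{-(2m-2)}\sum_{j=m-k}^{m-1}\binom{m-1}{j}s^{2j}$, so the inequality is equivalent to $s^{2(m-k)}c^{2(k-1)}\le\sum_{j=m-k}^{m-1}\binom{m-1}{j}s^{2j}$. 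Expanding $c^{2(k-1)}=(1+s^2)^{k-1}=\sum_{i=0}^{k-1}\binom{k-1}{i}s^{2i}$ and comparing the coefficient of $s^{2(m-k+i)}$ on each side, it suffices to show $\binom{k-1}{i}\le\binom{m-1}{m-k+i}$ for $0\le i\le k-1$; but $\binom{m-1}{m-k+i}=\binom{m-1}{k-1-i}\ge\binom{k-1}{k-1-i}=\binom{k-1}{i}$, the middle step being monotonicity of $\binom{N}{r}$ in $N$, valid because $m-1\ge k-1$.

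\emph{Main obstacle.} There is no deep difficulty here: the work is the bookkeeping in Step 1 (tracking the Gamma factors together with the powers of $\pi$ and $2$ hidden inside $\gamma_{2m}(2k)$) and, in Step 2, realizing that after all cancellations the estimate collapses to a coefficient-by-coefficient binomial comparison following from $\binom{m-1}{r}\ge\binom{k-1}{r}$. It is worth checking the endpoint behavior for consistency: as $\rho\to0$ both sides blow up like $\rho^{2k-2m}$ with the same leading constant (matching the Euclidean Green's function $\gamma_n(2k)^{-1}|x|^{2k-n}$), and the difference on the right is strictly positive for $\rho>0$ since $m>k$, so the bound is genuine but not sharp for any fixed $\rho$.
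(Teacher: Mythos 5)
Your proof is correct and follows essentially the same route as the paper: Step~1 is the same specialization of Lemma~\ref{lm6.2} with its ``$k$''$=0$ and ``$l$''$=k$ that the paper carries out in \eqref{6.10}, and Step~2 is, after the same binomial expansion of $\bigl(\cosh\tfrac{\rho}{2}\bigr)^{2m-2}=(1+s^2)^{m-1}$ and $(1+s^2)^{k-1}$ used in \eqref{6.11}--\eqref{6.12}, the same coefficient-by-coefficient comparison $\binom{k-1}{i}\le\binom{m-1}{m-k+i}$ that the paper phrases as $\Gamma(m)/\Gamma(m-k+j+1)\ge\Gamma(k)/\Gamma(j+1)$.
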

\begin{proof}
Choosing $k=0$ and $l=k$, we have
\begin{equation}\label{6.10}
\begin{split}
P^{-1}_{k}=&\left[\prod^{k-1}_{j=0}((j+1/2)^{2}-(n-1)^{2}/4-\Delta_{\mathbb{H}})\right]^{-1}\\
=&\frac{1}{4\Gamma(k)\pi^{m}\left(\sinh \rho\right)^{2m-2}}
\sum^{m-1-k}_{j=0}\frac{\Gamma(m)\Gamma(m-k)}{\Gamma(j+1)\Gamma(m-j)}
\left(\sinh\frac{\rho}{2}\right)^{2j+2k-2}\\
=&\frac{4^{k-1}}{\gamma_{2m}(2k)(\sinh\rho)^{2m-2}}\sum^{m-1-k}_{j=0}\frac{\Gamma(m)}{\Gamma(j+1)\Gamma(m-j)}
\left(\sinh\frac{\rho}{2}\right)^{2j+2k-2}.
\end{split}
\end{equation}
On the other hand,
\begin{equation}\label{6.11}
\begin{split}
&\frac{1}{\gamma_{n}(2k)}\left[\left(\frac{1}{2\sinh\frac{\rho}{2}}\right)^{n-2k}-
\left(\frac{1}{2\cosh\frac{\rho}{2}}\right)^{n-2k}\right]\\
=&\frac{4^{k-1}}{\gamma_{2m}(2k)(\sinh\rho)^{2m-2}}\left[\left(\cosh\frac{\rho}{2}\right)^{2m-2}\left(\sinh\frac{\rho}{2}\right)^{2k-2}
-\left(\cosh\frac{\rho}{2}\right)^{2k-2}\left(\sinh\frac{\rho}{2}\right)^{2m-2}\right]\\
=&\frac{4^{k-1}}{\gamma_{2m}(2k)(\sinh\rho)^{2m-2}}\left[\left(1+\sinh^{2}\frac{\rho}{2}\right)^{m-1}\left(\sinh\frac{\rho}{2}\right)^{2k-2}
-\right.\\
&\left.\left(1+\sinh^{2}\frac{\rho}{2}\right)^{k-1}\left(\sinh\frac{\rho}{2}\right)^{2m-2}\right]\\
=&\frac{4^{k-1}}{\gamma_{2m}(2k)(\sinh\rho)^{2m-2}}\left[\sum^{m-1}_{j=0}\frac{\Gamma(m)}{\Gamma(j+1)\Gamma(m-j)}\left(\sinh\frac{\rho}{2}\right)^{2j+2k-2}
-\right.\\
&\left.\sum^{k-1}_{j=0}\frac{\Gamma(k)}{\Gamma(j+1)\Gamma(k-j)}\left(\sinh\frac{\rho}{2}\right)^{2j+2m-2}\right].\\
\end{split}
\end{equation}
Therefore,
\begin{equation}\label{6.12}
\begin{split}
&\frac{1}{\gamma_{n}(2k)}\left[\left(\frac{1}{2\sinh\frac{\rho}{2}}\right)^{n-2k}-
\left(\frac{1}{2\cosh\frac{\rho}{2}}\right)^{n-2k}\right]-P^{-1}_{k}\\
=&\frac{4^{k-1}}{\gamma_{2m}(2k)(\sinh\rho)^{2m-2}}\left[\sum^{m-1}_{j=m-k}\frac{\Gamma(m)}{\Gamma(j+1)\Gamma(m-j)}\left(\sinh\frac{\rho}{2}\right)^{2j+2k-2}
-\right.\\
&\left.\sum^{k-1}_{j=0}\frac{\Gamma(k)}{\Gamma(j+1)\Gamma(k-j)}\left(\sinh\frac{\rho}{2}\right)^{2j+2m-2}\right].\\
=&\frac{4^{k-1}}{\gamma_{2m}(2k)(\sinh\rho)^{2m-2}}\left[\sum^{k-1}_{j=0}\frac{\Gamma(m)}{\Gamma(m-k+j+1)\Gamma(k-j)}\left(\sinh\frac{\rho}{2}\right)^{2j+2m-2}
-\right.\\
&\left.\sum^{k-1}_{j=0}\frac{\Gamma(k)}{\Gamma(j+1)\Gamma(k-j)}\left(\sinh\frac{\rho}{2}\right)^{2j+2m-2}\right]\\
=&\frac{4^{k-1}}{\gamma_{2m}(2k)(\sinh\rho)^{2m-2}}\sum^{k-1}_{j=0}\frac{1}{\Gamma(k-j)}
\left(\frac{\Gamma(m)}{\Gamma(m-k+j+1)}-\frac{\Gamma(k)}{\Gamma(j+1)}\right)\left(\sinh\frac{\rho}{2}\right)^{2j+2m-2}\\
\geq&0.
\end{split}
\end{equation}
This completes the proof of Lemma \ref{lm6.3}.
\end{proof}

\subsection{Case 2: $n$ is odd }
The proof depends on the following facts: if we denote by $h_{n}(t,\rho)$ the heat kernel of $n$-dimension hyperbolic space, then (see \cite{d2}, page 178)
\begin{equation}\label{6.13}
h_{n}(t,\rho)=\sqrt{2}e^{\frac{2n-1}{4}t}\int^{\infty}_{\rho}\frac{\sinh r}{\sqrt{\cosh r-\cosh\rho}}h_{n+1}(t,r)dr.
\end{equation}
Therefore, if we denote by $G_{n}(\rho,\lambda) (\lambda\geq-\frac{(n-1)^{2}}{4})$ be the Green's function of $\lambda-\Delta_{\mathbb{H}}$ of dimension $n$, then by (\ref{6.13}),
 \begin{equation}\label{6.14}
\begin{split}
G_{n}(\rho,\lambda)=&\int^{\infty}_{0}e^{-\lambda t}h_{n}(t,\rho)dt\\
=&\sqrt{2}
\int^{\infty}_{\rho}\frac{\sinh r}{\sqrt{\cosh r-\cosh\rho}}\left(\int^{\infty}_{0}e^{-\lambda t+\frac{2n-1}{4}t}h_{n+1}(t,r)dt\right)dr\\
=&\sqrt{2}
\int^{\infty}_{\rho}\frac{\sinh r}{\sqrt{\cosh r-\cosh\rho}}G_{n+1}(r,\lambda-(2n-1)/4)dr
\end{split}
\end{equation}
and thus
\begin{equation}\label{6.15}
\begin{split}
&G_{n}(\rho,(k+i+1/2)^{2}-(n-1)^{2}/4)\\
=&\sqrt{2}
\int^{\infty}_{\rho}\frac{\sinh r}{\sqrt{\cosh r-\cosh\rho}}G_{n+1}(r,(k+i+1/2)^{2}-n^{2}/4)dr.
\end{split}
\end{equation}

By (\ref{6.15}) and Lemma \ref{lm6.2}, we have that if $n=2m-1$, then
\begin{equation}\label{6.16}
\begin{split}
&\left[\prod^{l-1}_{j=0}((k+j+1/2)^{2}-(n-1)^{2}/4-\Delta_{\mathbb{H}})\right]^{-1}\\
=&\sqrt{2}
\int^{\infty}_{\rho}\left[\frac{1}{4\Gamma(l)\pi^{m}\left(\sinh r\right)^{2m-2}}
\sum^{m-1-k-l}_{j=0}\frac{\Gamma(m+k)\Gamma(m-k-l)\Gamma(m-j-l)}{\Gamma(j+1)\Gamma(m-k-l-j)\Gamma(m+k-j)}\cdot\right.\\
&\left.\left(\sinh\frac{r}{2}\right)^{2j+2l-2}\right]\frac{\sinh r}{\sqrt{\cosh r-\cosh\rho}}dr,
\end{split}
\end{equation}
where $l\in\{1,2,\cdots,m-1\}$ and $k\in\{0,1,\cdots, m-1-l\}$.

\medskip

\begin{lemma} \label{lm6.4}
Let $n=2m-1\geq3$ and $1<k<n/2$. The Green's function of $P_{k}$ satisfies
\begin{equation*}
\begin{split}
P^{-1}_{k}(\rho)=&\frac{4^{k-1}\sqrt{2}}{\gamma_{2m}(2k)}\sum^{m-1-k}_{j=0}\frac{\Gamma(m)}{\Gamma(j+1)\Gamma(m-j)}
\int^{\infty}_{\rho}
\left(\sinh\frac{r}{2}\right)^{2j+2k-2}\frac{(\sinh r)^{3-2m}}{\sqrt{\cosh r-\cosh\rho}}dr\\
\leq&\frac{1}{\gamma_{n}(2k)}\left[\left(\frac{1}{2\sinh\frac{\rho}{2}}\right)^{n-2k}-
\left(\frac{1}{2\cosh\frac{\rho}{2}}\right)^{n-2k}\right].
\end{split}
\end{equation*}
\end{lemma}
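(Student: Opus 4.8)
The plan is to reduce both the explicit formula and the pointwise bound to the even-dimensional statements already proved in Lemma~\ref{lm6.3}, using the transmutation identity (\ref{6.13})--(\ref{6.16}). Taking the index equal to $0$ and the order $l=k$ in (\ref{6.16}) gives
\[
P^{-1}_{k}(\rho)=\sqrt{2}\int_{\rho}^{\infty}\frac{\sinh r}{\sqrt{\cosh r-\cosh\rho}}\,\Phi(r)\,dr ,
\]
where $\Phi(r)$ denotes the bracketed finite sum on the right of (\ref{6.16}). After cancelling the trivial factor $\Gamma(m-j-k)/\Gamma(m-k-j)=1$ and using $\Gamma(m-k)/\bigl(4\,\Gamma(k)\,\pi^{m}\bigr)=4^{k-1}/\gamma_{2m}(2k)$, which follows at once from (\ref{4.13}), one recognises $\Phi(r)$ as precisely the Green's function $P^{-1}_{k}(r)$ of $P_k$ on $\mathbb{B}^{2m}$ given by (\ref{6.10}). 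Pulling the resulting finite sum outside the integral then produces the asserted closed form for $P^{-1}_{k}(\rho)$. Note that the hypothesis $1<k<n/2=m-\tfrac12$ forces $2\le k\le m-1$, which is exactly what guarantees convergence of these integrals (both at $r=\rho$ and as $r\to\infty$) and what allows Lemma~\ref{lm6.3} to be invoked in dimension $2m$.

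For the pointwise bound I would first apply Lemma~\ref{lm6.3} in dimension $2m$ to estimate
\[
P^{-1}_{k}(r)\big|_{\mathbb{B}^{2m}}\ \le\ \frac{1}{\gamma_{2m}(2k)}\left[\left(\frac{1}{2\sinh\frac r2}\right)^{2m-2k}-\left(\frac{1}{2\cosh\frac r2}\right)^{2m-2k}\right],
\]
and then, since the transmutation kernel $\sinh r/\sqrt{\cosh r-\cosh\rho}$ is nonnegative for $r>\rho$, integrate this inequality to obtain
\[
P^{-1}_{k}(\rho)\ \le\ \frac{\sqrt{2}}{\gamma_{2m}(2k)}\int_{\rho}^{\infty}\frac{\sinh r}{\sqrt{\cosh r-\cosh\rho}}\left[\left(\frac{1}{2\sinh\frac r2}\right)^{2m-2k}-\left(\frac{1}{2\cosh\frac r2}\right)^{2m-2k}\right]dr .
\]
It then remains to show that the last integral equals $\frac{1}{\gamma_{2m-1}(2k)}\bigl[(2\sinh\frac\rho2)^{-(2m-1-2k)}-(2\cosh\frac\rho2)^{-(2m-1-2k)}\bigr]$, which is exactly the right-hand side in the statement with $n=2m-1$.

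This last identity I would verify by an elementary computation. Writing $a=2m-2k$ and using $2\sinh^{2}\frac r2=\cosh r-1$, $2\cosh^{2}\frac r2=\cosh r+1$ together with the substitution $s=\cosh r$ (so $\sinh r\,dr=ds$), the integral splits into two pieces of the form $\int_{\cosh\rho}^{\infty}(s\mp1)^{-a/2}(s-\cosh\rho)^{-1/2}\,ds$. In each piece the further substitution $s\mp1=(\cosh\rho\mp1)u$ reduces it to $(\cosh\rho\mp1)^{(1-a)/2}\int_{1}^{\infty}u^{-a/2}(u-1)^{-1/2}\,du=(\cosh\rho\mp1)^{(1-a)/2}\,\mathbf{B}\!\left(\tfrac{a-1}{2},\tfrac12\right)$, and $\mathbf{B}(\tfrac{a-1}{2},\tfrac12)=\sqrt{\pi}\,\Gamma(m-k-\tfrac12)/\Gamma(m-k)$. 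Collecting the powers of $2$ and comparing the overall constant with $\gamma_{2m}(2k)/\gamma_{2m-1}(2k)=\sqrt{\pi}\,\Gamma(m-k-\tfrac12)/\Gamma(m-k)$ (again immediate from (\ref{4.13})) completes the argument. The only step demanding any care is this bookkeeping of constants and powers of $2$; there is no genuine analytic obstacle, since the whole proof rests on the already-established even-dimensional estimate of Lemma~\ref{lm6.3} together with the transmutation formula (\ref{6.14}).
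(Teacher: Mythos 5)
Your proposal is correct and follows essentially the same line as the paper's proof: derive the closed form by taking the lowest index and order $l=k$ in the transmutation formula (\ref{6.16}), recognize the bracketed sum as the even-dimensional Green's function from (\ref{6.10}), then obtain the pointwise bound by inserting the Lemma~\ref{lm6.3} estimate under the (nonnegative) transmutation kernel and evaluating the resulting integral exactly. The only difference is cosmetic and occurs in the last step: the paper substitutes $t=\sqrt{(\cosh r-\cosh\rho)/2}$ to reduce to $\int_0^\infty (t^2+a^2)^{-n}\,dt$ and invokes the double-factorial identity, whereas you substitute $s=\cosh r$ and a linear rescaling to reach a Beta integral $\mathbf{B}\!\left(\tfrac{a-1}{2},\tfrac12\right)$; both reductions are standard and the constants reconcile in the same way through $\gamma_{2m}(2k)/\gamma_{2m-1}(2k)=\sqrt{\pi}\,\Gamma(m-k-\tfrac12)/\Gamma(m-k)$.
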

\begin{proof}
With the same argument as in the proof of Lemma \ref{lm6.3}, we have, by (\ref{6.16}),
\begin{equation}\label{6.11}
\begin{split}
P^{-1}_{k}(\rho)=&\frac{4^{k-1}\sqrt{2}}{\gamma_{2m}(2k)}\sum^{m-1-k}_{j=0}\frac{\Gamma(m)}{\Gamma(j+1)\Gamma(m-j)}
\int^{\infty}_{\rho}
\left(\sinh\frac{r}{2}\right)^{2j+2k-2}\frac{(\sinh r)^{3-2m}}{\sqrt{\cosh r-\cosh\rho}}dr\\
\leq&\frac{\sqrt{2}}{\gamma_{2m}(2k)}
\int^{\infty}_{\rho}\left[\left(\frac{1}{2\sinh\frac{\rho}{2}}\right)^{2m-2k}-
\left(\frac{1}{2\cosh\frac{\rho}{2}}\right)^{2m-2k}\right]\frac{\sinh r}{\sqrt{\cosh r-\cosh\rho}}dr.
\end{split}
\end{equation}
Substituting $t=\sqrt{(\cosh r-\cosh\rho)/2}$ yields
\begin{equation*}
\begin{split}
P^{-1}_{k}(\rho)\leq&\frac{1}{\gamma_{2m}(2k)2^{2m-2k}}\int^{\infty}_{0}\left[\left(\frac{1}{t^{2}+\sinh^{2}\frac{\rho}{2}}\right)^{m-k}-
\left(\frac{1}{t^{2}+\cosh^{2}\frac{\rho}{2}}\right)^{m-k}\right]4dt\\
=&\frac{4}{\gamma_{2m}(2k)2^{2m-2k}}\cdot\frac{(2m-2k-3)!!\pi}{2\cdot(2m-2k-2)!!}
\left[\left(\frac{1}{\sinh\frac{\rho}{2}}\right)^{2m-2k-1}-
\left(\frac{1}{\cosh\frac{\rho}{2}}\right)^{2m-2k-1}\right]\\
=&\frac{1}{\gamma_{2m-1}(2k)}\left[\left(\frac{1}{2\sinh\frac{\rho}{2}}\right)^{n-2k}-
\left(\frac{1}{2\cosh\frac{\rho}{2}}\right)^{n-2k}\right].
\end{split}
\end{equation*}
To get the first equality, we use the identity (see e.g. \cite{gr}, page 324)
\[
\int^{\infty}_{0}\frac{dt}{(t^{2}+a^{2})^{n}}=\frac{(2n-3)!!}{2\cdot(2n-2)!!}\frac{\pi}{a^{2n-1}},\;\;a>0.
\]
The proof of Lemma \ref{lm6.4} is thereby completed.
\end{proof}

\textbf{Proof of Theorem \ref{th1.3}} Combining  Lemma \ref{lm6.3} and \ref{lm6.4}, Theorem \ref{th1.3} follows.

\end{document}